\numberwithin{equation}{section}
\numberwithin{figure}{section}
  \theoremstyle{definition}
  \newtheorem{defn}{\protect\definitionname}
  \theoremstyle{remark}
  \newtheorem*{rem*}{\protect\remarkname}
  \theoremstyle{plain}
  \newtheorem{prop}{\protect\propositionname}
  \theoremstyle{definition}
  \newtheorem*{example*}{\protect\examplename}
\theoremstyle{plain}
\newtheorem{thm}{\protect\theoremname}
  \theoremstyle{plain}
  \newtheorem{cor}{\protect\corollaryname}
  \theoremstyle{plain}
  \newtheorem{lem}{\protect\lemmaname}
 \theoremstyle{definition}
  \newtheorem{example}{\protect\examplename}
\providecommand{\definitionname}{Definition}
  \providecommand{\examplename}{Example}
  \providecommand{\lemmaname}{Lemma}
  \providecommand{\propositionname}{Proposition}
  \providecommand{\remarkname}{Remark}
\providecommand{\corollaryname}{Corollary}
\providecommand{\theoremname}{Theorem}
  \providecommand{\definitionname}{Definition}
  \providecommand{\examplename}{Example}
  \providecommand{\lemmaname}{Lemma}
  \providecommand{\propositionname}{Proposition}
  \providecommand{\remarkname}{Remark}
\providecommand{\corollaryname}{Corollary}
\providecommand{\theoremname}{Theorem}
\begin{document}

\title{Quantities, Dimensions and Dimensional Analysis}

\author{Dan Jonsson}

\address{Department of Sociology and Work Science, University of Gothenburg,
Box 720, SE 405 30 Göteborg, Sweden.}
\begin{abstract}
Formal definitions of quantities, quantity spaces, dimensions and
dimension groups are introduced. Based on these concepts, a theoretical
framework and a practical algorithm for dimensional analysis are developed,
and examples of dimensional analysis are given. 
\end{abstract}
\maketitle

\section{Introduction}

In a formula such as $E=mc^{2}$ or $\frac{\partial^{2}T}{\partial x^{2}}+\frac{\partial^{2}T}{\partial y^{2}}+\frac{\partial^{2}T}{\partial z^{2}}=0$,
used to express a physical law, the variables may be interpreted as
numerical measures of physical quantities. Such measures are somewhat
arbitrary. For example, the number $c$ representing the speed of
light in the numerical equation $E=mc^{2}$ depends not only on the
actual speed of light but also on the units of measurement used. This
suggests that it would be useful to represent physical quantities
(and other measurables) in a more definite and direct manner, as in
the modern approach to vector space theory, where vectors are seen
not as arrays of numbers but as abstract mathematical objects, which
can be referenced directly in a 'coordinate-free' manner by simple
symbols such as $u$, $v$ or $v'$. On the other hand, physical quantities
can be \emph{represented} by scalars through a process of 'coordinatisation'
\cite{key-8}: 
\[
physical\: quantity\rightarrow scalar\:(measure).
\]

It should be noted, though, that in a careful coordinatisation of
a physical or geometrical vector (characterized by its magnitude and
direction), numbers are seen as coordinates characterizing physical
phenomena \emph{indirectly} via mathematical objects, also called
vectors: 
\[
geometrical\: vector\rightarrow mathematical\: vector\rightarrow scalars\:(coordinates).
\]
 This suggests that one should introduce a more elaborate coordinatisation
of physical quantities: 
\[
physical\: quantity\rightarrow mathematical\: quantity\rightarrow scalar\:(measure),
\]
 where measures are similarly seen as characterizing physical quantities
indirectly.

As a point of departure for the analysis of quantities as abstract
mathematical objects, let us consider Maxwell's famous characterization
of quantities in his \emph{Treatise on Electricity and Magnetism}
\cite{key-6}, p. 41. 
\begin{quotation}
Every expression of a quantity consists of two factors or components.
One of these is the name of a certain known quantity of the same kind
as the quantity to be expressed, which is taken as a standard of reference.
The other component is the number of times the standard is to be taken
in order to make up the required quantity. The standard quantity is
technically called the Unit and the number is called the Numerical
value of the quantity 
\end{quotation}
Maxwell is essentially saying here that a quantity $q$ is something
which can be represented by a combination of (i) a number, the measure
of the quantity, and (ii) a unit of measurement, which is itself a
quantity of the same kind as $q$. Respect for Maxwell's genius should
not, however, make us blind to the fact that as a definition this
statement suffers from circularity. The notion of quantity presupposes
the notion of 'unit', but a 'unit' is itself a quantity ``of the
same kind'', so the notion of quantity is required in the first place.

One fairly obvious way of dealing with this circularity is to replace
the unit component of a quantity with a mathematical object of another
kind, related to but conceptually independent of a quantity: a \emph{sort},
such as 'meter', 'gallon' or 'hour'. A quantity is then seen as something
which can be represented by a combination of a scalar and a sort,
rather than a scalar and a unit-quantity; we can write scalar-sort
pairs as $\left(2,\mathsf{m}\right)$, $\left(3,\mathsf{kg}\right)$,
$\left(200,\mathsf{cm}\right)$ etc. It should be noted, though, that,
for example, $\left(2,\mathsf{m}\right)$ and $\left(200,\mathsf{cm}\right)$
are intended to represent the same physical quantity. Therefore, mathematical
quantities may instead be regarded as equivalence classes of scalar-sort
pairs characterizing the same physical quantities.

Another way of understanding and using Maxwell's observation is to
see it as a postulate about scalar multiplication. One of the axioms
defining vector spaces is that any vector $v$ can be multiplied by
a scalar $\lambda$, producing a vector $\lambda v$. Analogously,
one could, following Maxwell, take as axioms for an algebraic system
of quantities -- a ``quantity space'' -- the assumptions that (i)
the product $\lambda q$ of a scalar $\lambda$ and a quantity $q$
is a quantity $\lambda q$, and (ii) there is a set $U$ of quantities,
called units of measurement, such that for every quantity $q$ there
is some scalar $\lambda$ such that $q=\lambda u$ for some $u\in U$.

In this approach, circularity is not a concern, but (i) and (ii) are
not sufficient to set quantity spaces apart from related algebraic
structures such as vector spaces. It turns out to be possible to create
a suitable definition of quantity spaces by keeping scalar multiplication
(i) and adding other assumptions, however.

Specifically, quantities, as mathematical objects, can be defined
as concrete value-sort pairs (or equivalence classes of such pairs)
or as abstract mathematical objects. In the first case, operations
on quantities are defined in terms of operations on numbers and sorts;
in the second case, properties of operations on quantities are described
by means of abstract axioms. Such an abstract definition of quantity
spaces and quantities will be given below, noting that suitably defined
concrete mathematical quantities satisfy the given axioms for abstract
quantities.

The formal definition of mathematical quantities as elements of quantity
spaces given here makes it possible to formalize Maxwell's intuitive
notion of quantities ``of the same kind''. This is tantamount to
giving a formal definition of the notion of a \emph{dimension} such
as length or time. Dimensions will be defined as equivalence classes
of quantities; the equivalence classes of quantities in a particular
quantity space form a free abelian group, a so-called \emph{dimension
group}.

Quantities and quantity spaces are defined and discussed in Section
2, dimensions and dimension groups in Section 3. The remainder of
the article deals mainly with an important application of the mathematical
framework developed in these two sections, namely dimensional analysis.
Quantity functions and the representation of quantity functions by
means of scalar functions are treated in Section 4, principles and
methods of dimensional analysis are presented in Sections 5 and 6,
and examples of dimensional analysis are given in Section 7.

\section{On quantities}

\subsection{Abstract quantities}
\begin{defn}
\label{d2.1}Let $\mathbb{K}$ be a field. A \emph{scalar system}%
\footnote{The term \emph{semifield} is used in a similar sense.%
} $\mathcal{K}$ based on $\mathbb{K}$ is a subset of $\mathbb{K}$
such that $\mathcal{K}$ is closed under addition and the non-zero
elements of $\mathcal{K}$ form a group under multiplication. \emph{Scalars}
are elements of scalar systems. The unit element in $\mathcal{K}$
is denoted $1\!_{\mathcal{K}}$ or $1$. $\hphantom{\square}\square$ 
\end{defn}
In applications, $\mathbb{K}$ is usually the set of real numbers
$\mathbb{R}$, and the corresponding scalar system $\mathcal{R}$
is either $\mathbb{R}$ itself, the set $\mathbb{R}_{\geq0}$ of non-negative
numbers in $\mathbb{R}$, or the set $\mathbb{R}_{>0}$ of positive
numbers in $\mathbb{R}$. (The set $\mathbb{R}_{\neq0}$ of non-zero
real numbers is not closed under addition, although it is a group
under multiplication.)

Recall that a \emph{monoid} is a non-empty set together with an associative
binary operation and an identity element. We use multiplicative notation
for the binary operation, and the unit element of the monoid $Q$
will be denoted $1\!_{Q}$.
\begin{defn}
\label{d2.2}A\emph{ scalable (commutative) monoid} over a scalar
system $\mathcal{K}$ is a commutative monoid $Q$ such that there
is a function 
\[
\sigma:\mathcal{K}\times Q\rightarrow Q,\qquad\left(\alpha,q\right)\mapsto\sigma\left(\alpha,q\right)=\alpha q,
\]
called \emph{scalar multiplication}, such that for any $\alpha,\beta\in\mathcal{K}$
and any $q,q'\in Q$ we have 
\begin{enumerate}
\item $1\!_{\mathcal{K}}\, q=q$, 
\item $\alpha\left(\beta q\right)=\left(\alpha\beta\right)q$, 
\item $\alpha\left(qq'\right)=\left(\alpha q\right)q'.$ 
\end{enumerate}
An \emph{invertible} element $q\in Q$ is an element which has an
\emph{inverse} $q^{-1}\in Q$ such that $qq^{-1}=q^{-1}q=1\!_{Q}$.
We can define positive powers of $ $$q$, denoted $q^{c}$, in the
usual way; if $q$ is invertible, negative powers of $q$ can be defined
by setting $q{}^{c}=\left(q^{-1}\right)^{-c}$. By convention, $q^{0}=1\!_{Q}$.

A subset of a scalable monoid which is closed under monoid multiplication
and scalar multiplication is obviously also a scalable monoid, specifically
a \emph{scalable submonoid}. $\hphantom{\square}\square$
\end{defn}
The facts that $Q$ is commutative and associative and that scalar
multiplication is associative have some immediate consequences. For
example, $a(qq')=a(q'q)=(aq')q=q(aq')$, and $\left(\alpha q\right)\left(\alpha'q'\right)=\alpha\left(q\left(\alpha'q'\right)\right)=\alpha\left(\left(\alpha'q'\right)q\right)=\alpha\left(\alpha'\left(q'q\right)\right)=\alpha\left(\alpha'\left(qq'\right)\right)=\left(\alpha\alpha'\right)\left(qq'\right)$.
It is also clear that $q^{c}q^{d}=q^{\left(c+d\right)}$.
\begin{defn}
\label{d2.3}Let $Q$ be a scalable monoid over $\mathcal{K}$. A
(finite) \emph{basis} for $Q$ is a set $B=\left\{ b_{1},\ldots,b_{n}\right\} $
of invertible elements of $Q$ such that every $q\in Q$ has a unique
(up to order of factors) expansion 
\[
q=\mu\prod_{i=1}^{n}b_{i}^{_{_{k_{i}}}},
\]
where $\mu\in\mathcal{K}$ and $k_{1},\dots,k_{n}$ are integers.
$\hphantom{\square}\square$
\end{defn}
Any product of invertible quantities is invertible, so any product
of basis elements is invertible. If $\left\{ b_{1},\ldots,b_{n}\right\} $
is a basis for $Q$ then $\left\{ \lambda b_{1},\ldots,b_{n}\right\} $
is clearly a basis for $Q$ for any $\lambda\neq0$, since $\mu\prod_{i=1}^{n}b_{i}^{_{_{k_{i}}}}=\left(\mu/\lambda^{k_{1}}\right)\left(\lambda b_{1}\right)^{k_{1}}\prod_{i=2}^{n}b_{i}^{_{_{k_{i}}}}$.
\begin{defn}
\label{d2.4}A (finite-dimensional) \emph{free scalable monoid}, or
\emph{quantity space,} over $\mathcal{K}$ is a scalable monoid over
$\mathcal{K}$ which has a (finite) basis. The elements of a quantity
space are called \emph{quantities}. $\hphantom{\square}\square$
\end{defn}
It is clear that 
\[
\left(\mu\prod_{i=1}^{n}b_{i}^{_{_{k_{i}}}}\right)\left(\mu'\prod_{i=1}^{n}b_{i}^{_{k_{i}'}}\right)=\left(\mu\mu'\right)\prod_{i=1}^{n}b_{i}^{_{\left(k_{i}+k_{i}'\right)}}.
\]

\begin{rem*}
For a quantity space over $\mathcal{R}$, we can identify the scalar
product $\alpha\!\left(1_{Q}\right)$ with the real number $\alpha$,
so we could also define a scalable monoid over $\mathcal{R}$ as a
commutative monoid $Q$ such that $\mathcal{R}\subset Q$, $1_{Q}=1$,
and multiplication in $Q$ is consistent with the usual multiplication
in $\mathbb{\mathcal{R}}$, so that $1q=q$, since $1\alpha=\alpha,$
and $\alpha\left(\beta q\right)=\left(\alpha\beta\right)q,\,\alpha\left(qq'\right)=\left(\alpha q\right)q'$,
since $\alpha\left(\beta\gamma\right)=\left(\alpha\beta\right)\gamma$.
This approach has been used by, for example, Whitney \cite{key-7}
and Drobot \cite{key-4,key-5}. Partly in order to clarify the present
conceptualization, I will sketch a construction similar to the one
proposed by Drobot.

Recall that $\mathbb{R}$ can be regarded as a vector space over itself.
Similarly, $\mathbb{R}_{>0}$ is a vector space over $\mathbb{R}$
with \emph{vector addition} defined by $\alpha+\beta=\alpha\beta$
and \emph{scalar multiplication of a vector} defined by $\lambda\alpha=\alpha^{\lambda}$.
The bijection $\alpha\mapsto e^{\alpha}$ is an isomorphism between
$\mathbb{R}$ and $\mathbb{R}_{>0}$, and $\left\{ e\right\} $ is
a basis for $\mathbb{R}_{>0}$.

Let $V$ be an $n$-dimensional vector space over $\mathbb{R}$ and
consider the external direct sum of vector spaces $\Sigma=\mathbb{R}\oplus V$.
$\Sigma$ is a vector space over $\mathbb{R}$. There is a corresponding
vector space $\Pi=\mathbb{R}_{>0}\oplus V$ over $\mathbb{R}$. Using
additive notation, addition and scalar multiplication of vectors in
$\Pi$ are defined by 
\[
\left(\alpha,\mathbf{u}\right)+\left(\beta,\mathbf{v}\right)=\left(\alpha\beta,\mathbf{u}+\mathbf{v}\right),\qquad\lambda\left(\alpha,\mathbf{u}\right)=\left(\alpha^{\lambda},\lambda\mathbf{u}\right);
\]
using multiplicative notation, we can write 
\[
\left(\alpha,\mathbf{u}\right)\cdot\left(\beta,\mathbf{v}\right)=\left(\alpha\beta,\mathbf{u}\cdot\mathbf{v}\right),\qquad\left(\alpha,\mathbf{u}\right)^{\lambda}=\left(\alpha^{\lambda},\mathbf{u}^{\lambda}\right).
\]
(While $\mathbf{u}+\mathbf{v\mid\mathbf{u}\cdot\mathbf{v}}$ and $\lambda\mathbf{u}\mid\mathbf{u}^{\lambda}$
are alternative \emph{notations,} $\alpha+\beta\mid\alpha\beta$ and
$\lambda\alpha\mid\alpha^{\lambda}$ are different scalars.) The bijection
$\left(\alpha,\mathbf{u}\right)\mapsto\left(e^{\alpha},\mathbf{u}\right)$
is an isomorphism between $\Sigma$ and $\Pi$. As $V$ has a basis
$B$, every $\mathbf{v}\in\Pi$ has a unique expansion of the form
\[
\mathbf{v}=\lambda_{0}\left(e,\boldsymbol{0}\right)+\lambda_{1}\left(1,\mathbf{b}_{1}\right)+\ldots+\lambda_{n}\left(1,\mathbf{b}_{n}\right)\qquad\qquad\left(\lambda_{0},\ldots,\lambda_{n}\in\mathbb{R},\mathbf{b}_{i}\in B\right)
\]
in additive notation, and 
\[
\mathbf{v}=\left(e,\boldsymbol{1}\right)^{\lambda_{0}}\cdot\left(1,\mathbf{b}_{1}\right)^{\lambda_{1}}\cdot\ldots\cdot\left(1,\mathbf{b}_{n}\right)^{\lambda_{n}}\qquad\qquad\left(\lambda_{0},\ldots,\lambda_{n}\in\mathbb{R},\mathbf{b}_{i}\in B\right)
\]
in multiplicative notation, where the vector $\mathbf{0}$ is written
as $\mathbf{1}$. $\mathbf{v}\in\Sigma$ has a similar expansion,
and $\Sigma$ and $\Pi$ are $n+1$-dimensional vector spaces over
$\mathbb{R}$.

Drobot \cite{key-4} identifies $\left(\alpha,\mathbf{0}\right)$
or $\left(\alpha,\mathbf{1}\right)$ with the scalar $\alpha$, obtaining
analogues $\mathit{\Sigma}$ and $\mathit{\Pi}$ of $\Sigma$ and
$\Pi$, respectively, and then proves a variant of the so-called $\Pi$
theorem in dimensional analysis from facts about $\mathit{\Sigma}$
via corresponding facts about $\mathit{\Pi}$.

Instead of $\Pi$, let us consider a similar construction. Let $M$
be a module over $\mathbb{Z}$ and let $Q=\mathbb{R}_{>0}\oplus M$
be the external direct sum of the (abelian) multiplicative group of
$\mathbb{R}_{>0}$ and $M$ as an abelian group, so that again we
have 
\[
\left(\alpha,\mathbf{u}\right)+\left(\beta,\mathbf{v}\right)=\left(\alpha\beta,\mathbf{u}+\mathbf{v}\right)\quad\mathrm{or}\quad\left(\alpha,\mathbf{u}\right)\cdot\left(\beta,\mathbf{v}\right)=\left(\alpha\beta,\mathbf{u}\cdot\mathbf{v}\right).
\]

$Q$ is not a vector space over $\mathbb{R}$, but it is a commutative
monoid with $\left(1,\mathbf{1}\right)$ as unit element in multiplicative
notation, and we can define scalar multiplication on $Q$ as the function
\[
\sigma:\mathbb{R}_{>0}\times Q\rightarrow Q,\qquad\left(\lambda,\left(\alpha,\mathbf{u}\right)\right)\mapsto\lambda\left(\alpha,\mathbf{u}\right)=\left(\lambda\alpha,\mathbf{u}\right).
\]
It is easy to verify that then the conditions in Definition \ref{d2.2}
are satisfied, so $Q$ is a scalable monoid over $\mathbb{R}_{>0}$.

Furthermore, if $M$ has a finite basis $B=\left\{ \mathbf{b}_{1},\ldots,\mathbf{b}_{n}\right\} $
then every $q=\left(\alpha,\mathbf{u}\right)\in Q$ has a unique expansion
of the form 
\[
q=\left(\alpha,\mathbf{1}\right)\cdot\left(1,\mathbf{b}_{1}^{k_{1}}\cdot\ldots\cdot\mathbf{b}_{n}^{k_{n}}\right)\qquad\qquad\left(\alpha\in\mathbb{R}_{>0},k_{i}\in\mathbb{Z},\mathbf{b}_{i}\in B\right)
\]
in multiplicative notation, since $\left(\alpha,\mathbf{u}\right)$
has a unique decomposition of the form $\left(\alpha,\mathbf{u}\right)=\left(\alpha,\mathbf{1}\right)\cdot\left(1,\mathbf{u}\right)$.
The unique expansion of $q$ can be written as 
\[
q=\alpha\,\prod_{i=1}^{n}\left(1,\mathbf{b}_{i}\right)^{k_{i}}\qquad\qquad\left(\alpha\in\mathbb{R}_{>0},k_{i}\in\mathbb{Z},\mathbf{b}_{i}\in B\right),
\]
so $Q$ is a finite-dimensional quantity space over $\mathbb{R}_{>0}$
in the sense of Definition \ref{d2.4}, with $\left\{ \left(1,\mathbf{b}_{1}\right),\ldots,\left(1,\mathbf{b}_{n}\right)\right\} $
as a basis.

Note that $\mathbb{R}\oplus M$ can be defined as a quantity space
over $\mathbb{R}$ in the same way.
\end{rem*}

\subsection{Measures of quantities; invertible quantities}
\begin{defn}
\label{d2.5}Let $Q$ be a quantity space and let $B=\left\{ b_{1},\ldots,b_{n}\right\} $
be a basis for $Q$. The (uniquely determined) scalar $\mu$ in the
expansion 
\[
q=\mu\prod_{i=1}^{n}b_{i}^{k_{i}}
\]
is called the \emph{measure} of $q$ relative to \textbf{$B$} and
will be denoted by $\mu_{B}\!\left(q\right)$. If $\mu_{B}\!\left(q\right)$
does not depend on $B$, we may write $\mu_{B}\!\left(q\right)$ as
$\mu\!\left(q\right).$ $\hphantom{\square}\square$
\end{defn}
For example, we have $\mu_{B}\!\left(1_{Q}\right)=1$ for any $B$,
or $\mu\!\left(1_{Q}\right)=1$, because $1_{Q}=1\prod_{i=1}^{n}b_{i}^{0}$
for any $B$.
\begin{prop}
\label{s2.1}Let $B=\left\{ b_{1},\ldots,b_{n}\right\} $ be a basis
for a quantity space $Q$. 
\begin{enumerate}
\item For any $q\in Q$, $\mu_{B}\!\left(\lambda q\right)=\lambda\mu_{B}\!\left(q\right)$. 
\item For any $q,q'\in Q$, $\mu_{B}\!\left(qq'\right)=\mu_{B}\!\left(q\right)\mu_{B}\!\left(q'\right)$. 
\item A quantity $q\in Q$ is invertible if and only if $\mu_{B}\!\left(q\right)\neq0$,
and $\mu_{B}\!\left(q^{-1}\right)=\mu_{B}\!\left(q\right)^{-1}=1/\mu_{B}\!\left(q\right)$.
\end{enumerate}
\end{prop}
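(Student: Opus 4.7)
The plan is to exploit the uniqueness of the basis expansion $q = \mu \prod_{i=1}^n b_i^{k_i}$ throughout, reducing each claim to an algebraic manipulation that rewrites a product or scalar multiple into another expansion of the same form, then reading off the scalar coefficient.

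For (1), I would start from $q = \mu_B(q)\prod_i b_i^{k_i}$ and apply the scalar multiplication axioms from Definition \ref{d2.2}: repeated use of axiom (2) (and the associativity observations noted after Definition \ref{d2.2}) gives $\lambda q = (\lambda \mu_B(q))\prod_i b_i^{k_i}$. Since this is a valid expansion of $\lambda q$ in the basis $B$, uniqueness in Definition \ref{d2.3} forces $\mu_B(\lambda q) = \lambda\mu_B(q)$.

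For (2), I would write $q = \mu \prod_i b_i^{k_i}$ and $q' = \mu'\prod_i b_i^{k_i'}$ and invoke the identity $\bigl(\mu\prod_i b_i^{k_i}\bigr)\bigl(\mu'\prod_i b_i^{k_i'}\bigr) = (\mu\mu')\prod_i b_i^{k_i+k_i'}$ noted immediately after Definition \ref{d2.4}. Uniqueness of the expansion of $qq'$ then yields $\mu_B(qq') = \mu\mu' = \mu_B(q)\mu_B(q')$.

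For (3), the forward direction follows from (2): if $q$ is invertible with inverse $q^{-1}$, then $\mu_B(q)\mu_B(q^{-1}) = \mu_B(qq^{-1}) = \mu_B(1_Q) = 1$, so $\mu_B(q)\neq 0$ and $\mu_B(q^{-1}) = \mu_B(q)^{-1}$. For the converse, suppose $q = \mu\prod_i b_i^{k_i}$ with $\mu \neq 0$. Since the $b_i$ are invertible (part of the basis definition) and products of invertibles are invertible, set $q' = \mu^{-1}\prod_i b_i^{-k_i}$ using the field inverse $\mu^{-1}\in\mathcal{K}$. Then by (2) (applied to the subset of invertible elements) and the computation $\prod_i b_i^{k_i}\cdot\prod_i b_i^{-k_i} = \prod_i b_i^{0} = 1_Q$, we get $qq' = 1_Q$, so $q$ is invertible.

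The argument is entirely routine once one commits to the strategy of rewriting both sides as basis expansions and appealing to uniqueness; the only mild subtlety is the converse of (3), where one has to notice that the scalar $\mu^{-1}$ really lies in $\mathcal{K}$ (which follows from Definition \ref{d2.1}, since nonzero scalars form a multiplicative group) and that the $b_i^{-k_i}$ make sense because basis elements are invertible in $Q$.
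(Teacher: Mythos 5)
Your proposal is correct and follows essentially the same route as the paper's own proof: expand in the basis, rewrite $\lambda q$ and $qq'$ as new expansions via the scalar-multiplication axioms and read off the coefficient by uniqueness, and for (3) construct the inverse $\mu_B\!\left(q\right)^{-1}\prod_{i=1}^{n}b_{i}^{-k_{i}}$ in one direction and apply part (2) to $qq^{-1}=1_{Q}$ in the other. Your added remarks on why $\mu^{-1}\in\mathcal{K}$ and why the $b_{i}^{-k_{i}}$ exist are fine and only make explicit what the paper leaves implicit.
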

\begin{proof}
Set $q=\mu p$, where $p=\prod_{i=1}^{n}b{}_{i}^{k_{i}}$, and $q'=\mu'p'$,
where $p'=\prod_{i=1}^{n}b_{i}^{k_{i}'}$.\\
(1). $\mu_{B}\!\left(\lambda q\right)=\mu_{B}\!\left(\lambda\left(\mu p\right)\right)=\mu_{B}\!\left(\left(\lambda\mu\right)p\right)=\lambda\mu=\lambda\mu_{B}\!\left(q\right)$.
\\
(2). $qq'=\left(\mu p\right)\left(\mu'p'\right)=\left(\mu\mu'\right)\left(pp'\right)$,
so $\mu_{B}\!\left(qq'\right)=\mu\mu'=\mu_{B}\!\left(q\right)\mu_{B}\!\left(q'\right)$.
\\
(3). If $\mu_{B}\!\left(q\right)\neq0$ we have 
\[
\frac{\prod_{i=1}^{n}b_{i}^{-k_{i}}}{\mu_{B}\!\left(q\right)}q=q\frac{\prod_{i=1}^{n}b_{i}^{-k_{i}}}{\mu_{B}\!\left(q\right)}=\left(\mu_{B}\!\left(q\right)\prod_{i=1}^{n}b_{i}^{k_{i}}\right)\frac{\prod_{i=1}^{n}b_{i}^{-k_{i}}}{\mu_{B}\!\left(q\right)}=1_{Q},
\]
so $q$ is invertible. If, conversely, $q$ has an inverse $q^{-1}$
then $\mu_{B}\!\left(q\right)\mu_{B}\!\left(q^{-1}\right)=\mu_{B}\!\left(1_{Q}\right)=1$,
so $\mu_{B}\!\left(q\right)\neq0$ and $\mu_{B}\!\left(q^{-1}\right)=1/\mu_{B}\!\left(q\right)$.$\qedhere$ \end{proof}
\begin{prop}
\label{s2.2}If $q\in Q$ is invertible and $\lambda q=\lambda'q$
then $\lambda=\lambda'$.\end{prop}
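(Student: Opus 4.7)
The plan is to reduce the equation $\lambda q = \lambda' q$, which lives in the scalable monoid $Q$, to a scalar equation in the underlying field $\mathbb{K}$, where ordinary cancellation is available. The tool for this translation is the measure function $\mu_B$ introduced in Definition \ref{d2.5}.

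First I would fix any basis $B = \{b_1,\ldots,b_n\}$ for $Q$, which exists by Definition \ref{d2.4}. Applying $\mu_B$ to both sides of $\lambda q = \lambda' q$ and using part (1) of Proposition \ref{s2.1}, I obtain
\[
\lambda\,\mu_B(q) = \mu_B(\lambda q) = \mu_B(\lambda' q) = \lambda'\,\mu_B(q),
\]
an equality in $\mathbb{K}$. Next I would invoke part (3) of Proposition \ref{s2.1}: the hypothesis that $q$ is invertible yields $\mu_B(q) \neq 0$, hence $\mu_B(q)$ is invertible in $\mathbb{K}$. Multiplying both sides by $\mu_B(q)^{-1}$ gives $\lambda = \lambda'$.

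I do not expect a genuine obstacle here; the statement is essentially a direct corollary of parts (1) and (3) of Proposition \ref{s2.1}. The only point worth noting is that the final cancellation takes place in the ambient field $\mathbb{K}$ rather than in the scalar system $\mathcal{K}$ itself, but since $\mathcal{K} \subseteq \mathbb{K}$ by Definition \ref{d2.1}, the conclusion $\lambda = \lambda'$ is automatically an equality of elements of $\mathcal{K}$. An alternative route would be to multiply $\lambda q = \lambda' q$ on the right by $q^{-1}$ and use axiom (3) of Definition \ref{d2.2} to reduce to $\lambda\cdot 1_Q = \lambda'\cdot 1_Q$, then apply $\mu_B$ together with $\mu_B(1_Q)=1$; this yields the same conclusion with essentially the same effort.
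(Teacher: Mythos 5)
Your proof is correct and is essentially the paper's own argument in packaged form: the paper expands $q=\mu\prod_i b_i^{k_i}$ and cancels $\mu\neq 0$ using uniqueness of the expansion, which is exactly what your appeal to Proposition \ref{s2.1}(1) and (3) amounts to, since $\mu_B(q)$ is that coefficient. Nothing further is needed.
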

\begin{proof}
Set $q=\mu\prod_{i=1}^{n}b{}_{i}^{k_{i}}$. If $\lambda q=\lambda'q$
then $\lambda\mu\prod_{i=1}^{n}b_{i}^{k_{i}}=x=\lambda'\mu\prod_{i=1}^{n}b_{i}^{k_{i}}$,
so $\lambda\mu=\lambda'\mu$, since the representation of $x$ is
unique, and $\mu\neq0$ since $q$ is invertible, so $\lambda=\lambda'$.
$\qedhere$
\end{proof}

\subsection{Modeling and representation of quantities by sort-assigned scalars}

As briefly mentioned in the Introduction, scalar-sort pairs, or equivalence
classes of such pairs, may be used to model physical quantities. On
the other hand, such constructs can also be used to represent abstract
mathematical quantities in much the same way as tuples of scalars
represent abstract vectors. It is worthwhile to elaborate somewhat
on these observations.

\emph{(a)}. Let $S^{\star}$ be the free abelian group on $S=\left\{ s_{1},\ldots,s_{n}\right\} $,
and let $\mathcal{K}$ be a scalar system. We define multiplication
on $P=\mathcal{K}\times S^{\star}$ by $\left(k,s\right)\left(k',s'\right)=\left(kk',ss'\right)$,
while scalar multiplication of an element $\left(k,s\right)$ of $P$
by an element $\lambda$ of $\mathcal{K}$ is defined by $\lambda\left(k,s\right)=\left(\lambda k,s\right)$.

It is easy to verify that with these operations $P$ is a scalable
monoid over $\mathcal{K}$. Furthermore, as $S$ is a basis for $S^{\star}$,
$\left\{ \left(1,s\right)\mid s\in S\right\} $ is a basis for $P$,
so $P$ is a quantity space over $\mathcal{K}$. In particular, we
can let $\mathcal{K}$ be a scalar system of real numbers $\mathcal{R}$,
and interpret the elements of $S$ as \emph{sorts} such as 'kilogram',
'centimeter', 'second', 'meter per second' etc. With this interpretation
in mind, we may call $\left(k,s\right)\in P$ a \emph{sort-assigned
scalar} and $P$ a \emph{space of sort-assigned scalars}.

\emph{(b)}. Let $P$ be a space of sort-assigned scalars over $\mathcal{K}$,
and let $\sim$ be a congruence relation on $P$. Then we can define
the product of equivalence classes in $P/\!\sim$ without ambiguity
by setting $\left[p\right]\left[q\right]=\left[pq\right]$ for any
$p,q\in P$. We can also set $\lambda\left[q\right]=\left[\lambda q\right]$
for any $\lambda\in\mathcal{K},q\in P$. It can be shown that $P/\!\sim$
with these operations is also a quantity space over $\mathcal{K}$.

\emph{(c)}. Suppose that there is some \emph{universe of measurables},
in geometry, physics and engineering called \emph{physical quantities}.
A particular sort-assigned scalar may \emph{characterize} more than
one measurable. For example, $\left(10,\mathsf{cm}\right)$ may characterize
both the base and the height of a triangle.

On the other hand, different sort-assigned scalars may characterize
the same physical quantities. For example, with the usual interpretation
$\left(100,\mathsf{cm}\right)$ and $\left(1,\mathsf{m}\right)$ characterize
the same measurables. Let $\sim$ be a relation on $P$ such that
$p\sim q$ if and only if $p$ and $q$ characterize the same measurables.
This is obviously an equivalence relation on $P$, so there is a corresponding
set $P/\!\sim$ of equivalence classes. Let us assume that if $p$
and $p'$ characterize the same measurables and $q$ and $q'$ characterize
the same measurables then (i) $\lambda p$ and $\lambda p'$ characterize
the same measurables for every $\lambda\in\mathcal{K}$, and (ii)
$pq$ and $p'q'$ characterize the same measurables. Then $\sim$
is a congruence relation, so $P/\!\sim$ is a quantity space.

\emph{(d)}. We may denote the equivalence class of sort-assigned scalars
which contains $\left(c,s\right)$ by $\left[c,s\right]$. Alternatively,
we may write quantities such as $\left[1,\mathsf{kg}\right]$, $\left[2,\mathsf{m}^{2}\right]$,
$\left[3,\mathsf{ms^{-1}}\right]$ in the more familiar forms $1\,\mathsf{kg}$,
$2\,\mathsf{m^{2}},$ $3\,\mathsf{m/s}$ etc. Thus, we interpret an
equation such as $100\,\mathsf{cm}=1\,\mathsf{m}$ as $\left[100,\mathsf{cm}\right]=\left[1,\mathsf{m}\right]$
rather than $\mbox{\ensuremath{\left(100,\mathsf{cm}\right)}=\ensuremath{\left(1,\mathsf{m}\right)}}$\linebreak{}
 -- the latter equality is false, since $100\neq1$ and $\mathsf{cm}\neq\mathsf{m}$.

\subsection{Mathematical quantities as values of physical quantities}

The principal \emph{raison d'être} of systems of (abstract or concrete)
mathematical quantities is that they can be used to model systems
of measurables, in particular so-called physical quantities, which
are roughly speaking measurable properties of physical objects and
systems. We establish connections between mathematical and physical
quantities by means of names such as $radius$ and $circum\! f\! erence$
(of a circle), $length$ and $width$ (of a rectangle) etc. For example,
in a modeling context $radius$ refers to a particular physical quantity
as well as a particular mathematical quantity; a mathematical quantity
is thus linked to a physical quantity if and only if they have the
same name. Note that while distinct names always refer to distinct
physical quantities, distinct names can refer to the same mathematical
quantity. For example, the length and the width of a rectangle are
distinct physical quantities, but if the rectangle is a square then
the length and the width of the rectangle is the same mathematical
quantity. The mathematical quantity corresponding to a physical quantity
can be thought of as the \emph{value} of that physical quantity.

We can use common names of physical and mathematical quantities to
define new physical and mathematical quantities. For example, given
the physical and mathematical quantities $arc\; length$ and $radius$
we can define a physical and mathematical quantity $radian$ such
that the mathematical quantity $radian$ is connected to the mathematical
quantities $arc\: length$ and $radius$ by the relation 
\[
radian=arc\; length\cdot radius^{-1}.
\]
One should not think of such formulas as directly involving physical
quantities, however; it is not possible to multiply or divide physical
quantities as such.

\subsection{Systems of units of measurement}

Units of measurement and systems of such units are notions which are
linked to notions of physical and mathematical quantities. While a
full theory of units of measurement requires consideration of both
physical and mathematical quantities, some basic notions can be described
in terms of mathematical quantities, specifically abstract quantity
spaces.

A \emph{system of direct units of measurements} for a subset $S$
of a quantity space $Q$ is a set $U$ of quantities in $S$ such
that for every $q\in S$ there is some $u\in U$ and some $\mu\in\mathcal{K}$,
uniquely determined by $q$ and $u$, such that 
\[
q=\mu u.
\]
A \emph{system of fundamental units of measurement} for $Q$ is a
basis for $Q$; that is, a set $B=\left\{ b_{1},\ldots,b_{n}\right\} $
such that every $q\in Q$ has a unique expansion 
\[
q=\mu\prod_{i=1}^{n}b_{i}^{k_{i}}.
\]
In particular, every unit of measurement $u$ in any system of direct
units of measurement $U$ has a unique expansion of this kind. A \emph{coherent
system of units of measurement} for $S$ is a system of direct units
of measurement $U$ such that there is a system of fundamental units
of measurement $B$ for $Q$ such that every $u\in U$ has a unique
expansion of the form 
\[
u=1\prod_{i=1}^{n}b_{i}^{k_{i}}.
\]
The set of all $u\in Q$ of this form is obviously a coherent system
of units of measurement for all of $Q$. Relative to a coherent system
of units of measurement $U$, given by a basis $B$, every $q\in S$
has a representation of the form $q=\mu u$, where $\mu\in\mathcal{K}$
and $u\in U$ are uniquely determined by $q,$ relative to $B$.
\begin{rem*}
It is not always emphasized that all quantities in a quantity space
are defined in terms of fundamental units in a unique way, meaning
that the fundamental units do not only generate the quantity space
but also form a basis. Nevertheless, the fundamental units in systems
of measurement such as the CGS system, the MKS system or the SI system
invariably form a basis.
\end{rem*}

\section{On dimensions}

\subsection{Equidimensional quantities and dimensions}
\begin{defn}
\label{d3.1}Let $Q$ be a quantity space over $\mathcal{R}$, and
let $\sim$ be a relation on $Q$ such that $q\sim q'$ if and only
if $\alpha q=\beta q'$ for some $\alpha,\beta\in\mathcal{R}.$ Then
$q$ and $q'$ are said to be \emph{equidimensional} quantities, and
the relation $\sim$ is accordingly said to be an \emph{equidimensionality
relation}. $\hphantom{\square}\square$ 
\end{defn}
As a trivial consequence of this definition, $q\sim\lambda q$ for
any $q\in Q$ and $\lambda\in\mathcal{R}$, since $\lambda q=1\left(\lambda q\right)$.
\begin{prop}
\label{s3.1}An equidimensionality relation on a quantity space is
an equivalence relation.\end{prop}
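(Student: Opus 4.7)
The plan is to verify the three defining properties of an equivalence relation in turn. Reflexivity and symmetry are essentially immediate from the definition: taking $\alpha=\beta=1$ gives $1\cdot q=1\cdot q$, hence $q\sim q$; and if $\alpha q=\beta q'$ witnesses $q\sim q'$, then swapping the roles of the two sides of the equality (and of $\alpha,\beta$) witnesses $q'\sim q$. So the real content of the proposition lies in transitivity.

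For transitivity, suppose $q\sim q'$ and $q'\sim q''$, so that there exist $\alpha,\beta,\gamma,\delta\in\mathcal{R}$ with $\alpha q=\beta q'$ and $\gamma q'=\delta q''$. The natural move is to eliminate $q'$ by acting on the first equation by the scalar $\gamma$: using axiom~(2) of Definition~\ref{d2.2} together with commutativity of scalar multiplication in $\mathbb{K}$, I would rewrite $\gamma(\alpha q)=\gamma(\beta q')$ as $(\gamma\alpha)q=\beta(\gamma q')$, and then substitute $\gamma q'=\delta q''$ to obtain $(\gamma\alpha)q=(\beta\delta)q''$. Since $\mathcal{R}$ is closed under multiplication, $\gamma\alpha$ and $\beta\delta$ lie in $\mathcal{R}$, so this equality is exactly a witness that $q\sim q''$.

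I don't expect any real obstacle here; the argument is just a one-line scalar-manipulation that turns the two given relations into a single one, and it uses nothing beyond the scalar multiplication axioms and the closure property built into Definition~\ref{d2.1}. The only point worth flagging is that the scalars involved need not be invertible — which is important, because $\mathcal{R}$ may contain $0$ — but the transitivity argument above never needs to invert anything, so this causes no trouble.
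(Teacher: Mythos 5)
Your proof is correct and follows essentially the same route as the paper: reflexivity via $1q=1q$, symmetry by swapping sides, and transitivity by scaling the two witnessing equations so as to eliminate $q'$, yielding $(\gamma\alpha)q=(\beta\delta)q''$ exactly as in the paper's $\alpha\gamma q=\beta\delta q''$. The closure-under-multiplication remark is fine (it follows from Definition \ref{d2.1} even when $0\in\mathcal{R}$), so no further changes are needed.
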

\begin{proof}
The equidimensionality relation $\sim$ is reflexive because $1q=1q$,
it is symmetric because if $\alpha q=\beta q'$ then $\beta q'=\alpha q$,
and it is transitive because if $\alpha q=\beta q'$ and $\gamma q'=\delta q''$
then $\alpha\gamma q=\beta\gamma q'$ and $\beta\gamma q'=\beta\delta q''$,
so $\alpha\gamma q=\beta\delta q''$.$\qedhere$ \end{proof}
\begin{defn}
\label{d3.2}A \emph{dimension} is an equivalence class of quantities
with respect to equidimensionality. We denote the dimension containing
the quantity $q$ by $\left[q\right]$. $\hphantom{\square}\square$\end{defn}
\begin{prop}
\label{s3.2}Let $Q$ be a quantity space. The equidimensionality
relation $\sim$ is a congruence relation on $Q$, and $Q/{\sim}$
is a commutative monoid, with the product in $Q/{\sim}$ defined by
\[
[q][q']=[qq']
\]
for any $q,q'\in Q$.\end{prop}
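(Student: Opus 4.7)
The plan is to prove the statement in two stages: first establish that $\sim$ is a congruence with respect to the monoid multiplication on $Q$, and then check that the induced operation on $Q/{\sim}$ satisfies the monoid axioms.

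For the congruence property, I would take $q \sim p$ and $q' \sim p'$, so there exist scalars with $\alpha q = \beta p$ and $\gamma q' = \delta p'$, and aim to show $qq' \sim pp'$. The key computation uses the observation from the remark after Definition \ref{d2.2}, namely $(\alpha q)(\gamma q') = (\alpha\gamma)(qq')$, which follows from commutativity of $Q$ and axioms (2)--(3) of scalar multiplication. Applying this on both sides,
\[
(\alpha\gamma)(qq') = (\alpha q)(\gamma q') = (\beta p)(\delta p') = (\beta\delta)(pp'),
\]
which exhibits the required scalars witnessing $qq' \sim pp'$. This step is the main substantive obstacle, because everything else is bookkeeping; one has to notice that the compatibility axiom $\alpha(qq') = (\alpha q)q'$ together with commutativity produces the symmetric identity needed here.

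Once the congruence property is in hand, the operation $[q][q'] := [qq']$ on $Q/{\sim}$ is well-defined: for any other representatives $p \in [q]$, $p' \in [q']$, the congruence gives $pp' \sim qq'$, so $[pp'] = [qq']$. The monoid axioms are then inherited directly from $Q$. Associativity: $([q][q'])[q''] = [(qq')q''] = [q(q'q'')] = [q]([q'][q''])$, using associativity in $Q$. Commutativity: $[q][q'] = [qq'] = [q'q] = [q'][q]$, using commutativity in $Q$. Identity: $[1_Q]$ acts as unit since $[1_Q][q] = [1_Q q] = [q]$.

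I do not anticipate any further difficulty; the only non-routine point is recognizing that in a scalable commutative monoid, scalars can be ``pulled out'' of a product from either factor simultaneously, which is exactly what makes the equidimensionality classes close under multiplication.
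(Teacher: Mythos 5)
Your proof is correct and follows essentially the same route as the paper: both establish the congruence property via the identity $(\alpha q)(\gamma q') = (\alpha\gamma)(qq')$ noted after Definition \ref{d2.2}, applied to the witnessing scalar equations, and then inherit associativity, commutativity and the identity $\left[1_{Q}\right]$ from $Q$. No gaps.
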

\begin{proof}
If $\alpha q=\beta x$ and $\alpha'q'=\beta'x'$ then $(\alpha q)(\alpha'q')=(\beta x)(\beta'x')$,
so $(\alpha\alpha')(qq')=(\beta\beta')(xx')$. Thus, if $q\sim x$
and $q'\sim x'$ then $qq'\sim xx'$, $ $so the equivalence relation
$\sim$ is a congruence on $Q$, and the product in $Q/{\sim}$ defined
by $[q][q']=[qq']$ does not depend on the choice of representative
for $\left[q\right]$ and $\left[q'\right]$.

Straight-forward calculations show that $Q/{\sim}$ is associative
and commutative and that $\left[\mathbf{1}\right]q=q\left[\mathbf{1}\right]=q$.
$\qedhere$\end{proof}
\begin{example*}
In a quantity space over $\mathcal{R}$ whose elements are equivalence
classes of sort-assigned scalars, we have $5\left(2\,\mathsf{m}\right)=10\,\mathsf{m}=1\left(10\,\mathsf{m}\right)$,
so $2\,\mathsf{m}\sim10\,\mathsf{m}$, and $0\left(1\,\mathsf{s}\right)=0\,\mathsf{s}=1\left(0\,\mathsf{s}\right)$,
so $1\,\mathsf{s}\sim0\,\mathsf{s}$. On the other hand, there are
no $\alpha,\beta\in\mathcal{R}$ such that $\alpha\left(2\,\mathsf{m}\right)=\beta\left(2\,\mathsf{g}\right)$,
$\alpha\left(2\,\mathsf{m}\right)=\beta\left(0\,\mathsf{g}\right)$
or $\alpha\left(0\,\mathsf{m}\right)=\beta\left(0\,\mathsf{g}\right)$,
meaning that $2\,\mathsf{m}\nsim2\,\mathsf{g}$, $2\,\mathsf{m}\nsim0\,\mathsf{g}$
and $0\,\mathsf{m}\nsim0\,\mathsf{g}$. 
\end{example*}
Let $q$ be a quantity in $Q$ and $\mathfrak{d}$ a dimension in
$Q/{\sim}$. If $\left[q\right]=\mathfrak{d}$, or equivalently $q\in\mathfrak{d}$,
we say that $q$\emph{ has dimension }$\mathfrak{d}$, or that the
dimension of $q$ is $\mathfrak{d}$. The way the concept of 'dimension'
is defined here is generally consistent with previous informal and
formal uses of this term. For example, consider the principle of ``dimensional
homogeneity'' frequently invoked when dealing with physical quantities
(or other measurables), namely that quantities cannot be equal if
they do not have the same dimension. In terms of the present understanding
of dimensions, this principle is just the following simple fact about
quantities and dimensions: 
\[
\mathrm{if}\; q=q'\;\mathrm{then}\;\left[q\right]=\left[q'\right],\quad\mathrm{or}\;\mathrm{equivalently,}\quad\mathrm{if}\;\left[q\right]\neq\left[q'\right]\;\mathrm{then}\; q\neq q'.
\]
\pagebreak{}
\begin{prop}
\label{s3.3a}Let $q,q'\in Q$ have the expansions $\mu_{B\!}\left(q\right)\!\prod_{i=1}^{n}\! b_{i}^{k_{i}}$,
\textup{$\mu_{B\!}\left(q'\right)\!\prod_{i=1}^{n}\! b_{i}^{k_{i}'}$}
relative to a basis $B=\left\{ b_{1},\ldots,b_{n}\right\} $ for $Q$.
If $q\sim q'$ then $k_{i}=k_{i}'$ for $i=1,\ldots,n$.\end{prop}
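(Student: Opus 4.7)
The plan is to unpack Definition~\ref{d3.1}, substitute the given basis expansions into the resulting equation, combine scalar factors using the scalable-monoid axioms, and then conclude by the uniqueness clause of Definition~\ref{d2.3}.

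Concretely, since $q\sim q'$, there exist $\alpha,\beta\in\mathcal{R}$ with $\alpha q=\beta q'$. Substituting the given expansions of $q$ and $q'$ and applying part~(2) of Definition~\ref{d2.2} (together with closure of $\mathcal{K}$ under multiplication), I would rewrite this as
\[
\bigl(\alpha\,\mu_{B}\!\left(q\right)\bigr)\prod_{i=1}^{n}b_{i}^{k_{i}}\;=\;\bigl(\beta\,\mu_{B}\!\left(q'\right)\bigr)\prod_{i=1}^{n}b_{i}^{k_{i}'}.
\]
Both sides are now in the standard basis-expansion form of Definition~\ref{d2.3} for the same element of $Q$, so the uniqueness clause immediately forces $k_{i}=k_{i}'$ for $i=1,\ldots,n$, which is the required conclusion. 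As a by-product one also gets $\alpha\,\mu_{B}\!\left(q\right)=\beta\,\mu_{B}\!\left(q'\right)$, but this plays no role here.

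The point requiring the most care is the potentially degenerate case in which the common scalar coefficient $\alpha\,\mu_{B}\!\left(q\right)=\beta\,\mu_{B}\!\left(q'\right)$ happens to vanish, which in many algebraic settings could make the accompanying exponents ambiguous. Within the present framework this is not actually an obstruction: the uniqueness assertion in Definition~\ref{d2.3} is stated for every $q\in Q$ with no restriction on the scalar factor, so distinct exponent tuples axiomatically produce distinct elements of $Q$ even when paired with a zero scalar. Consequently the one-line appeal to uniqueness suffices, and no case split on whether $\alpha$ or $\beta$ vanishes, nor any invocation of Proposition~\ref{s2.1}(3) to pass to invertible representatives, is required.
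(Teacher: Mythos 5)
Your proposal is correct and follows essentially the same argument as the paper: equate $\alpha q=\beta q'$, absorb the scalars into the basis expansions, and invoke the uniqueness clause of Definition~\ref{d2.3} to conclude $k_{i}=k_{i}'$. Your closing remark about the vanishing-scalar case is accurate (the uniqueness in Definition~\ref{d2.3} is unconditional), and the paper likewise makes no case split.
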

\begin{proof}
By assumption, there are scalars $\alpha,\beta$ such that 
\[
\left(\alpha\,\mu_{B\!}\left(q\right)\right)\prod_{i=1}^{n}b_{i}^{k_{i}}=x=\left(\beta\,\mu_{B\!}\left(q'\right)\right)\prod_{i=1}^{n}b_{i}^{k_{i}'},
\]
and since the expansion of $x$ relative to $B$ is unique, $k_{i}=k_{i}'$
for $i=1,\ldots,n$.$\qedhere$ \end{proof}
\begin{prop}
\label{s3.3}If $q,q'\in Q$ and $q\sim q'$ then $\mu_{B}\!\left(q'\right)q=\mu_{B}\!\left(q\right)q'$.\end{prop}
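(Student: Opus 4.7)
The plan is to reduce the claim immediately to the preceding Proposition \ref{s3.3a}. Since $q \sim q'$, that proposition tells me that the exponent tuples $(k_1,\ldots,k_n)$ and $(k_1',\ldots,k_n')$ in the expansions of $q$ and $q'$ relative to $B$ must coincide. Setting $p = \prod_{i=1}^{n} b_i^{k_i}$, I therefore have the clean representations $q = \mu_B(q)\, p$ and $q' = \mu_B(q')\, p$ with the \emph{same} quantity $p$ appearing on the right.

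Given this, the desired identity $\mu_B(q')\, q = \mu_B(q)\, q'$ reduces to a one-line manipulation using axiom (2) of Definition \ref{d2.2} (associativity of scalar multiplication) together with commutativity of multiplication in the underlying scalar system $\mathcal{R}$. Explicitly, I would write
\[
\mu_B(q')\, q \;=\; \mu_B(q')\,\bigl(\mu_B(q)\, p\bigr) \;=\; \bigl(\mu_B(q')\mu_B(q)\bigr)\, p \;=\; \bigl(\mu_B(q)\mu_B(q')\bigr)\, p \;=\; \mu_B(q)\,\bigl(\mu_B(q')\, p\bigr) \;=\; \mu_B(q)\, q',
\]
which completes the proof.

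There is essentially no obstacle here: the substantive content has already been discharged by Proposition \ref{s3.3a}, and the remaining work is only bookkeeping with the scalar-multiplication axioms. The proof is intentionally short because it is meant to serve as the bridge between the structural statement (equidimensional quantities have identical exponent tuples) and the numerical statement (their measures can be swapped across the $\sim$-relation), which is the form needed for later applications in dimensional analysis.
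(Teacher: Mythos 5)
Your proposal is correct and is essentially the paper's own argument: both invoke Proposition \ref{s3.3a} to equate the exponent tuples, write $q=\mu_B(q)\,p$ and $q'=\mu_B(q')\,p$ with $p=\prod_{i=1}^{n}b_i^{k_i}$, and finish by commuting the scalars via the scalar-multiplication axioms. No differences worth noting; your version merely spells out the bookkeeping that the paper leaves as a one-line display.
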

\begin{proof}
Use Proposition \ref{s3.3a} and note that

\[
\mu_{B}\!\left(q'\right)\left(\mu_{B}\!\left(q\right)\prod_{i=1}^{n}b_{i}^{k_{i}}\right)=\mu_{B}\!\left(q\right)\left(\mu_{B}\!\left(q'\right)\prod_{i=1}^{n}b_{i}^{k_{i}}\right).\qedhere
\]

\end{proof}
Conversely, if $\mu_{B}\!\left(q'\right)q=\mu_{B}\!\left(q\right)q'$
then $q\sim q'$ by definition, so Proposition \ref{s3.3} gives an
alternative definition of equidimensionality. Note that if $\mu_{B}\!\left(q\right)\neq0$
then $\mu_{B}\!\left(q'\right)q=\mu_{B}\!\left(q\right)q'$ implies
\[
q'=\frac{\mu_{B}\!\left(q'\right)}{\mu_{B}\!\left(q\right)}q,
\]
so Proposition \ref{s3.3} shows that if $q\sim q'$ and $q$ is invertible
then there is some $\lambda$ such that $q'=\lambda q$; if $q'$
is invertible as well then $\mu_{B}\!\left(q'\right)\neq0$, so $\lambda\neq0$.
\begin{thm}
\label{s3.4}Let $Q$ be a quantity space over $\mathcal{R}.$ For
every $q\in\left[1_{Q}\right]$, $\mu_{B}\!\left(q\right)$ does not
depend on $B$.\end{thm}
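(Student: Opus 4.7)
The plan is to exploit the fact that $q\sim1_{Q}$ forces the ``basis part'' of the expansion of $q$ to be trivial in every basis, so that $q$ reduces to a scalar multiple of $1_{Q}$, and then to use invertibility of $1_{Q}$ to extract a unique scalar.

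First, I would fix an arbitrary basis $B=\{b_{1},\dots,b_{n}\}$ for $Q$. Since $1_{Q}=1\prod_{i=1}^{n}b_{i}^{0}$, the integer exponents of $1_{Q}$ relative to $B$ are all zero, and $\mu_{B}(1_{Q})=1$. Now suppose $q\in[1_{Q}]$, i.e.\ $q\sim1_{Q}$. Applying Proposition \ref{s3.3a} with $q'=1_{Q}$, the exponents $k_{i}$ appearing in the expansion of $q$ relative to $B$ must all be zero as well. Therefore
\[
q \;=\; \mu_{B}(q)\prod_{i=1}^{n}b_{i}^{0} \;=\; \mu_{B}(q)\,1_{Q}.
\]

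Next, I would observe that this conclusion is independent of the specific basis used: the same reasoning applied to any other basis $B'$ yields $q=\mu_{B'}(q)\,1_{Q}$. Consequently
\[
\mu_{B}(q)\,1_{Q} \;=\; q \;=\; \mu_{B'}(q)\,1_{Q}.
\]
Since $1_{Q}$ is invertible (it is its own inverse), Proposition \ref{s2.2} immediately gives $\mu_{B}(q)=\mu_{B'}(q)$. As $B$ and $B'$ were arbitrary, $\mu_{B}(q)$ does not depend on $B$, and one may write $\mu(q)$ unambiguously.

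There is no serious obstacle here; the only subtlety worth noting is making sure that the step ``exponents coincide with those of $1_{Q}$'' is carried out for an arbitrary basis rather than a fixed one, so that the resulting identity $q=\mu_{B}(q)\,1_{Q}$ is available in every basis. Once that is observed, the cancellation of the common factor $1_{Q}$ via Proposition \ref{s2.2} finishes the argument cleanly.
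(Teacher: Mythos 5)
Your proof is correct and follows essentially the same route as the paper: both arguments reduce $q\in\left[1_{Q}\right]$ to the form $q=\mu_{B}\!\left(q\right)1_{Q}$ in every basis and then cancel the invertible factor $1_{Q}$ via Proposition \ref{s2.2}. The only difference is cosmetic: you invoke Proposition \ref{s3.3a} directly and argue forward, whereas the paper passes through Proposition \ref{s3.3} (itself a consequence of \ref{s3.3a}) and phrases the final step as a contradiction using $\mu_{B}\!\left(1_{Q}\right)=1$.
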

\begin{proof}
As $1_{Q}\sim q$, $\mu_{B}\!\left(q\right)1_{Q}=\mu_{B}\!\left(1_{Q}\right)q$
by Proposition \ref{s3.3}, so if $\mu_{B}\!\left(q\right)\neq\mu_{B'}\!\left(q\right)$,
so that $\mu_{B}\!\left(q\right)1_{Q}\neq\mu_{B'}\!\left(q\right)1_{Q}$
by Proposition \ref{s2.2}, then $\mu_{B}\!\left(1_{Q}\right)\neq\mu_{B'}\!\left(1_{Q}\right)$,
but this contradicts the fact that $\mu_{B}\!\left(1_{Q}\right)=1$
for any $B$.$\qedhere$ 
\end{proof}
So-called \emph{dimensionless} quantities are often \emph{defined,}
in line with Theorem \ref{s3.4}, as those whose measures do not depend
on the basis (system of fundamental units of measurement) chosen;
\emph{dimensional} or \emph{dimensionful} quantities are then defined
as non-dimensionless quantities. This terminology is not consistent
with how quantities and dimensions are conceptualized here, however,
since a ``dimensionless'' quantity does have a dimension, namely
$\left[1_{Q}\right]$. I prefer the terms \emph{quasiscalar quantity}
and \emph{proper quantity}.

Note that despite Theorem \ref{s3.4}, a ``dimensionless'' quantity
can have different measures relative to different ``dimensionless''
\emph{direct} units of measurement. For example, angles can be measured
in both radians and degrees, although angles are ``dimensionless'',
but there is only one possible unit of measurement of an angle in
a \emph{coherent} system of (direct) units of measurement defined
in terms of a system of fundamental units (that is, a basis for $Q$),
namely the radian.

\subsection{Sums of quantities}

Let $p\in Q$ be an invertible quantity. As $\mathcal{R}$ is closed
under addition, we can define the sum $q+_{\! p}q'$ of $q=\lambda p$
and $q'=\lambda'p$ relative to $p$ by setting 
\[
q+_{\! p}q'=\left(\lambda+\lambda'\right)p.
\]
If $p'$ is another invertible quantity such that $p'\sim p$, Proposition
\ref{s3.3} implies that $p'=\kappa p$ for some non-zero $\kappa\in\mathcal{R}$.
Hence, 
\[
q+_{\! p'}q'=\left(\lambda/\kappa\right)\kappa p+_{\!\kappa p}\left(\lambda'/\kappa\right)\kappa p=\left(\lambda/\kappa+\lambda'/\kappa\right)\kappa p=\left(\lambda+\lambda'\right)p=q+_{\! p}q',
\]
so $q+_{\! p}q'$ does not depend on $p$. Note that if $q=\lambda p$
and $q'=\lambda'p$ then $\lambda'q=\lambda q'$, so $q\sim q'$;
conversely, if $q\sim q'$ then there is an invertible quantity $p$
such that $q=\lambda p$ and $q'=\lambda'p$ by Proposition \ref{s3.3a}.
Thus, $q+q'$ can be defined in the following way if and only if $q\sim q'$:
\begin{defn}
\label{d3.3}Let $Q$ be a quantity space over $\mathcal{R}$. We
set 
\[
\lambda p+\lambda'p=\left(\lambda+\lambda'\right)p
\]
for any invertible $p\in Q$ and $\lambda,\lambda'\in\mathcal{R}$.
$\hphantom{\square}\square$
\end{defn}
As a trivial consequence of this definition, addition of quantities
is commutative because addition of scalars is commutative. Straight-forward
calculation also shows that addition of quantities is associative
because addition of scalars is associative.
\begin{prop}
\label{s3.5}If $q,q'\in Q$, $q\sim q'$ and B is a basis for $Q$
then $\mu_{B}\!\left(q\right)+\mu_{B}\!\left(q'\right)=\mu_{B}\!\left(q+q'\right)$.\end{prop}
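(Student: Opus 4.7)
The plan is to use Proposition \ref{s3.3a} to put $q$ and $q'$ into a common form with respect to the basis $B$, and then apply Definition \ref{d3.3} together with the uniqueness of the basis expansion.

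First, since $q \sim q'$, Proposition \ref{s3.3a} tells us that the exponents in the basis expansions of $q$ and $q'$ relative to $B$ must coincide. So we can write $q = \mu_B(q)\,p$ and $q' = \mu_B(q')\,p$, where $p = \prod_{i=1}^{n} b_i^{k_i}$ for a common tuple of integer exponents. Because $p$ is a product of invertible elements (basis elements are invertible by Definition \ref{d2.3}), $p$ itself is invertible.

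Next, I apply Definition \ref{d3.3} to this common-factor representation:
\[
q + q' \;=\; \mu_B(q)\,p + \mu_B(q')\,p \;=\; \bigl(\mu_B(q) + \mu_B(q')\bigr)\,p \;=\; \bigl(\mu_B(q) + \mu_B(q')\bigr)\prod_{i=1}^{n} b_i^{k_i}.
\]
The right-hand side is an expansion of $q + q'$ in the form required by Definition \ref{d2.3}, with leading scalar $\mu_B(q) + \mu_B(q')$. By the uniqueness clause of that definition, this leading scalar must equal $\mu_B(q + q')$, which gives the claim.

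There is no real obstacle here: the only subtlety is making sure that the two hypotheses feeding into the argument are invoked in the right order, namely that equidimensionality (via Proposition \ref{s3.3a}) is what allows us to extract a \emph{single} invertible $p$ as a common factor, and that this common factor is exactly the hypothesis that Definition \ref{d3.3} requires in order to define $q + q'$ at all. One could alternatively finish by writing $q+q' = (\mu_B(q)+\mu_B(q'))p$ and applying Proposition \ref{s2.1}(1) together with $\mu_B(p) = 1$, but invoking the uniqueness of the expansion directly is cleaner.
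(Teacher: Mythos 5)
Your proof is correct and follows the same route as the paper: write $q$ and $q'$ with a common invertible factor $p=\prod_{i=1}^{n}b_{i}^{k_{i}}$ (justified by Proposition \ref{s3.3a}), add via Definition \ref{d3.3}, and read off the measure from the uniqueness of the expansion. You are in fact slightly more explicit than the paper, which uses the common exponents without citing Proposition \ref{s3.3a}; otherwise the arguments coincide.
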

\begin{proof}
Let $q=\mu_{B}\!\left(q\right)\prod_{i=1}^{n}b_{i}^{k_{i}}$ and $q'=\mu_{B\!}\left(q'\right)\prod_{i=1}^{n}b_{i}^{k_{i}}$
be the expansions of $q$ and $q'$ relative to $B=\left\{ b_{1},\ldots.b_{n}\right\} $.
$\prod_{i=1}^{n}b_{i}^{k_{i}}$ is invertible, so 
\[
q+q'=\mu_{B}\!\left(q\right)\prod_{i=1}^{n}b_{i}^{k_{i}}+\mu_{B\!}\left(q'\right)\prod_{i=1}^{n}b_{i}^{k_{i}}=\left(\mu_{B}\!\left(q\right)+\mu_{B\!}\left(q'\right)\right)\prod_{i=1}^{n}b_{i}^{k_{i}},
\]
so we have obtained the unique expansion of $q+q'$ relative to $B$,
and this expansion shows that $\mu_{B}\!\left(q+q'\right)=\mu_{B}\!\left(q\right)+\mu_{B}\!\left(q'\right)$.$\qedhere$ 
\end{proof}
If $0\in\mathcal{R}$ then there is for each $q\in Q$ a quantity
$0q\in Q$ such that 
\[
q+0q=0q+q=q,
\]
because if $q$ has the expansion $q=\mu\prod_{i=1}^{n}b_{i}^{k_{i}}$
relative to $B$ then $0q=0\prod_{i=1}^{n}b_{i}^{k_{i}}$. In fact,
$0p=0\prod_{i=1}^{n}b_{i}^{k_{i}}=0q$ for any $p\sim q$, so $0q$
is the unique zero quantity in $\left[q\right]$.

If $\mathcal{R}=\mathbb{R}$, we set 
\[
-q=\left(-1\right)q\qquad\mathrm{and}\qquad q'-q=q'+\left(-q\right).
\]
for any $q\in Q$. Then we have 
\[
q-q=-q+q=0q
\]
for any $q\in Q$.

\subsection{Dimensions as vector spaces}
\begin{prop}
\label{p8}Let $Q$ be a quantity space over $\mathcal{R}$. For any
$\alpha,\alpha'\in\mathcal{R}$ and $q,q'\in Q$ we have
\begin{enumerate}
\item $\alpha\left(q+q'\right)=\alpha q+\alpha q'$;
\item $\left(\alpha+\alpha'\right)q=\alpha q+\alpha'q$.
\end{enumerate}
\end{prop}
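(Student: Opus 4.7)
The plan is to reduce both identities to the distributive law in the underlying field $\mathbb{K}$ by writing each quantity in the form (scalar)$\cdot$(invertible quantity), so that Definition \ref{d3.3} applies directly, and then invoking axiom (2) of Definition \ref{d2.2} together with the obvious identity $(\alpha\beta)p = \alpha(\beta p)$.

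For (1): Since $q+q'$ is defined, we must have $q\sim q'$. Fix a basis $B=\{b_1,\ldots,b_n\}$. By Proposition \ref{s3.3a}, the expansions of $q$ and $q'$ relative to $B$ share the same exponents, so setting $p=\prod_{i=1}^n b_i^{k_i}$ (which is invertible as a product of invertibles) we have $q=\mu_B(q)\,p$ and $q'=\mu_B(q')\,p$. Then Definition \ref{d3.3} gives $q+q' = (\mu_B(q)+\mu_B(q'))\,p$, and the scalable-monoid axioms yield $\alpha(q+q') = \bigl(\alpha(\mu_B(q)+\mu_B(q'))\bigr)p = \bigl(\alpha\mu_B(q)+\alpha\mu_B(q')\bigr)p$ by distributivity in $\mathbb{K}$. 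On the other hand, $\alpha q = (\alpha\mu_B(q))\,p$ and $\alpha q' = (\alpha\mu_B(q'))\,p$, both scalar multiples of the same invertible $p$, so Definition \ref{d3.3} gives $\alpha q+\alpha q' = (\alpha\mu_B(q)+\alpha\mu_B(q'))\,p$, matching the previous expression.

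For (2): Fix any basis $B$ and write $q = \mu_B(q)\prod_{i=1}^n b_i^{k_i}$. Set $p=\prod_{i=1}^n b_i^{k_i}$, invertible. Then $\alpha q = (\alpha\mu_B(q))\,p$ and $\alpha' q = (\alpha'\mu_B(q))\,p$ are both scalar multiples of the same invertible $p$, so by Definition \ref{d3.3},
\[
\alpha q + \alpha' q = \bigl(\alpha\mu_B(q)+\alpha'\mu_B(q)\bigr)p = \bigl((\alpha+\alpha')\mu_B(q)\bigr)p = (\alpha+\alpha')q,
\]
where the second equality uses distributivity in $\mathbb{K}$ and the third uses axiom (2) of Definition \ref{d2.2}.

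There is no real obstacle; the only delicate point is that in (2) the quantity $q$ itself need not be invertible, so one cannot directly apply Definition \ref{d3.3} to $\alpha q$ and $\alpha' q$ without first passing through the basis expansion to extract an invertible common factor $p$. Once that is done, both parts are immediate from distributivity in $\mathbb{K}$.
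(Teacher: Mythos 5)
Your proof is correct and takes essentially the same route as the paper: the paper also writes $q=\lambda p$, $q'=\lambda' p$ for a common invertible $p$ (whose existence for $q\sim q'$ is exactly the point you make via Proposition \ref{s3.3a} and the basis expansion) and then reduces both identities to distributivity of scalars via Definition \ref{d3.3} and the scalable-monoid axioms. Your version merely makes explicit the extraction of the invertible common factor, which the paper leaves implicit.
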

\begin{proof}
Set $q=\lambda p$ and $q'=\lambda'p$, where $p\in Q$ is invertible.
\\
(1). $\alpha\left(q+q'\right)=\alpha\left(\lambda p+\lambda'p\right)=\alpha\left(\lambda+\lambda'\right)p=\left(\alpha\lambda+\alpha\lambda'\right)p=\alpha\lambda p+\alpha\lambda'p$\\
$\phantom{(1).}=\alpha q+\alpha q'$;\\
(2). $\left(\alpha+\alpha'\right)q=\left(\alpha+\alpha'\right)\lambda p=\left(\alpha\lambda+\alpha'\lambda\right)p=\alpha\lambda p+\alpha'\lambda p=\alpha q+\alpha'q$.
$\qedhere$
\end{proof}
We conclude that a dimension $\mathfrak{d}$ in a quantity space over
$\mathbb{R}$, or indeed any field, can be regarded as a one-dimensional
vector space. As we have seen, an additive group structure, corresponding
to the additive group of the scalar field, can be defined on $\mathfrak{d}$.
Furthermore, in view of (1) and (2) in Definition \ref{d2.2} and
Proposition \ref{p8}, vector space scalar multiplication is defined
on $\mathfrak{d}$. Finally, the fact that there is some $p\in\mathfrak{d}$
such that for every $q\in\mathfrak{d}$ there is some scalar $\lambda$
such that $q=\lambda p$ means that $\mathfrak{d}$ is a one-dimensional
vector space.

\subsection{Dimension groups}

Since $Q$ is commutative, $Q/{\sim}$ is also commutative, and in
this section we prove that $Q/{\sim}$ is not only a commutative monoid
but in fact a free abelian group.
\begin{prop}
\label{s3.6}If $Q$ is a quantity space then $Q/{\sim}$ is an abelian
group.\end{prop}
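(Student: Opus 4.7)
The plan is to exhibit, for each dimension $[q] \in Q/{\sim}$, an explicit inverse by first replacing $q$ with an invertible representative and then inverting that representative in $Q$ itself. Since $Q/{\sim}$ is already known to be a commutative monoid by Proposition \ref{s3.2}, and the class $[1_Q]$ is its identity, only the existence of inverses remains to be shown.

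The key observation is that every quantity is equidimensional with an invertible one. Fix a basis $B = \{b_1,\ldots,b_n\}$ for $Q$ and write $q = \mu_B(q) \prod_{i=1}^n b_i^{k_i}$. Set $p = \prod_{i=1}^n b_i^{k_i}$. Each $b_i$ is invertible by Definition \ref{d2.3}, so (as noted after that definition) the product $p$ is invertible, with inverse $p^{-1} = \prod_{i=1}^n b_i^{-k_i}$. Taking $\alpha = 1$ and $\beta = \mu_B(q)$, the identity $\alpha q = \beta p$ holds by the very expansion of $q$, so $q \sim p$ and hence $[q] = [p]$. This step handles the potentially awkward case $\mu_B(q) = 0$ uniformly with the rest.

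With $[q] = [p]$ in hand, I would verify that $[p^{-1}]$ is the desired inverse in $Q/{\sim}$: by the definition of the product on $Q/{\sim}$ (Proposition \ref{s3.2}),
\[
[q][p^{-1}] = [p][p^{-1}] = [p\, p^{-1}] = [1_Q],
\]
so $[q]$ has an inverse in $Q/{\sim}$, namely $[p^{-1}] = \bigl[\prod_{i=1}^n b_i^{-k_i}\bigr]$. Commutativity is inherited from Proposition \ref{s3.2}, so $Q/{\sim}$ is an abelian group.

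I do not anticipate a real obstacle here; the only subtlety worth pausing on is the reduction $q \sim p$ when $q$ itself need not be invertible, which is what makes it legitimate to pass to an invertible representative. That reduction is immediate from the basis expansion plus the definition of $\sim$, so the argument is short and essentially computational.
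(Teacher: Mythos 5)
Your proof is correct and follows essentially the same route as the paper: both pass from $q$ to the invertible representative $\prod_{i=1}^n b_i^{k_i}$ (the paper's $x$, your $p$), note $[q]=[p]$ since $q=\mu_B(q)\,p$, and exhibit $[p^{-1}]$ as the inverse, with the monoid structure supplied by Proposition \ref{s3.2}. No gaps.
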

\begin{proof}
For every $\mathfrak{d}\in Q/{\sim}$ there is some $q\in Q$ such
that $\mathfrak{d}=\left[q\right]$. In view of Proposition \ref{s3.2},
suffices to show that $\mathfrak{d}$ has an inverse. Let $B=\left\{ b_{1},\ldots,b_{n}\right\} $
be a basis for $Q$, so that $q=\mu\prod_{i=1}^{n}b_{i}^{k_{i}}$,
and set $x=\prod_{i=1}^{n}b_{i}^{k_{i}}.$ Then $x$ is invertible,
and $\left[q\right]=\left[x\right]$, since $q=\mu x$. We have $\left[x^{-1}\right]\mathfrak{d}=\mathfrak{d}\left[x^{-1}\right]=\left[q\right]\left[x^{-1}\right]=\left[x\right]\left[x^{-1}\right]=\left[xx^{-1}\right]=\left[1_{Q}\right]$,
so $\left[x^{-1}\right]$ is the inverse of $\mathfrak{d}$.$\qedhere$\end{proof}
\begin{thm}
\label{s3.7}Let $Q$ be a quantity space. 
\begin{enumerate}
\item If $B=\left\{ b_{1},\ldots,b_{n}\right\} $ is a basis for $Q$, then
$B^{*}=\left\{ \left[b_{1}\right],\ldots,\left[b_{n}\right]\right\} $
is a basis for $Q/{\sim}$ with the same number of elements. 
\item If $B^{*}=\left\{ \left[b_{1}\right],\ldots,\left[b_{n}\right]\right\} $,
where each $b_{i}$ is invertible, is a basis for $Q/{\sim}$, then
$B=\left\{ b_{1},\ldots,b_{n}\right\} $ is a basis for $Q$ with
the same number of elements. 
\end{enumerate}
\end{thm}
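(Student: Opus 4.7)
The plan is to use the tight correspondence between expansions of quantities in $Q$ (Definition \ref{d2.3}) and expansions of dimensions in $Q/{\sim}$, with Propositions \ref{s3.3a} and \ref{s3.3} providing the bridge in both directions. ``Basis for $Q/{\sim}$'' should be read in the free abelian group sense, i.e., unique $\prod_i [b_i]^{k_i}$ expansion with $k_i \in \mathbb{Z}$.

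For (1), take any $\mathfrak{d} \in Q/{\sim}$ and a representative $q = \mu \prod_i b_i^{k_i}$. Since $1 \cdot q = \mu \cdot \prod_i b_i^{k_i}$, we have $q \sim \prod_i b_i^{k_i}$, so $\mathfrak{d} = [q] = \prod_i [b_i]^{k_i}$, giving spanning. For uniqueness of the exponents, an equality $\prod_i [b_i]^{k_i} = \prod_i [b_i]^{k_i'}$ means $\prod_i b_i^{k_i} \sim \prod_i b_i^{k_i'}$ in $Q$, whence Proposition \ref{s3.3a} forces $k_i = k_i'$ for every $i$. Applied to the one-term expansions of $b_i$ and $b_j$, the same proposition also shows $[b_i] \neq [b_j]$ whenever $i \neq j$, so $|B^{*}| = n$.

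For (2), fix $q \in Q$. Since $B^{*}$ is a basis for $Q/{\sim}$, write $[q] = \prod_i [b_i]^{k_i}$ with unique integer exponents; this reads $q \sim p$ where $p := \prod_i b_i^{k_i}$. Each $b_i$ being invertible makes $p$ invertible, so Proposition \ref{s3.3} together with the remark following it yields a scalar $\mu \in \mathcal{R}$ with $q = \mu p$, producing the desired expansion $q = \mu \prod_i b_i^{k_i}$. Uniqueness of the $k_i$ is inherited from the basis property of $B^{*}$; once the $k_i$ are fixed, the invertibility of $p$ and Proposition \ref{s2.2} force uniqueness of $\mu$. Distinctness of the $b_i$ is immediate from distinctness of the $[b_i]$.

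The only delicate point I anticipate is verifying that the scalar $\mu$ produced in (2) really lies in $\mathcal{R}$ and not merely in the ambient field $\mathbb{K}$. Proposition \ref{s3.3} gives $\mu_{B_0}(p)\, q = \mu_{B_0}(q)\, p$ relative to any preexisting basis $B_0$ of $Q$, and invertibility of $p$ ensures $\mu_{B_0}(p) \neq 0$ by Proposition \ref{s2.1}(3). The candidate scalar $\mu_{B_0}(q)/\mu_{B_0}(p)$ then lies in $\mathcal{R}$ either as a quotient in the multiplicative group of non-zero elements of $\mathcal{R}$, or as $0$ (which requires $0 \in \mathcal{R}$, and is exactly the case $\mu_{B_0}(q) = 0$). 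Beyond this check, the proof is a routine translation between expansions in $Q$ and in $Q/{\sim}$.
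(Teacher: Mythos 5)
Your proof is correct and follows essentially the same route as the paper's: spanning and uniqueness of exponents via Propositions \ref{s3.3a} and \ref{s3.2} for part (1), and for part (2) the passage from $[q]=\prod_i[b_i]^{k_i}$ to $q=\mu\prod_i b_i^{k_i}$ via Proposition \ref{s3.3} (using a preexisting basis of $Q$) with uniqueness of $\mu$ from Proposition \ref{s2.2}. Your extra check that the scalar $\mu$ lies in $\mathcal{R}$ is a slightly more careful version of the remark the paper makes after Proposition \ref{s3.3}, and is fine.
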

\begin{proof}
(1). The unique expansions of $b_{i},b_{i'}\in B$ relative to $B$
are $b_{i}=1b_{i}$ and $b_{i'}=1b_{i'}$, so $\mu_{B}\left(b_{i}\right)=\mu_{B}\left(b_{i'}\right)=1$.
Hence, $\left[b_{i}\right]=\left[b_{i'}\right]$ implies $b_{i}=b_{i'}$
according to Proposition \ref{s3.3}, so the mapping $b_{i}\mapsto\left[b_{i}\right]$
is one-to-one.

Let $\mathfrak{d}=\left[q\right]$ be an arbitrary dimension in $Q/{\sim}$.
As $B$ generates $Q$, $q=\mu\prod_{i=1}^{n}b_{i}^{k_{i}}$ for some
integers $k_{1},\ldots,k_{n}$, so $\mathfrak{d}=\left[\mu\prod_{i=1}^{n}b_{i}^{k_{i}}\right]=\left[\prod_{i=1}^{n}b_{i}^{k_{i}}\right]=\prod_{i=1}^{n}\left[b_{i}\right]^{k_{i}}$,
so $B^{*}$ generates $Q/{\sim}$.

Also, if $\mathfrak{d}=\prod_{i=1}^{n}\left[b_{i}\right]^{k_{i}}=\prod_{i=1}^{n}\left[b_{i}\right]^{k_{i}'}$,
then $\left[1\prod_{i=1}^{n}b_{i}^{k_{i}}\right]=\left[1\prod_{i=1}^{n}b_{i}^{k_{i}'}\right]$,
so\linebreak{}
 $k_{i}=k_{i}'$ for $i=1,\ldots,n$ by Proposition \ref{s3.3a},
since $B$ is a basis for $Q$.\medskip{}

(2). If $b_{i}=b_{i'}$ then $\left[b_{i}\right]=\left[b_{i'}\right]$,
so the mapping $\left[b_{i}\right]\mapsto b_{i}$ is one-to-one.

Consider an arbitrary $q\in Q$. We have $\left[q\right]=\prod_{i=1}^{n}\left[b_{i}\right]^{k_{i}}=\left[\prod_{i=1}^{n}b_{i}^{k_{i}}\right]$,
so \linebreak{}
 $q\sim\prod_{i=1}^{n}b_{i}^{k_{i}}$. $\prod_{i=1}^{n}b_{i}^{k_{i}}$
is invertible, so Proposition \ref{s3.3} implies that there exists
some $\lambda$ such that $q=\lambda\prod_{i=1}^{n}b_{i}^{k_{i}}$.

Finally, if $q=\mu\prod_{i=1}^{n}b_{i}^{k_{i}}=\mu'\prod_{i=1}^{n}b_{i}^{k_{i}'}$
then $\left[\mu\prod_{i=1}^{n}b_{i}^{k_{i}}\right]=\left[\mu'\prod_{i=1}^{n}b_{i}^{k_{i}'}\right]$,
so $\left[\prod_{i=1}^{n}b_{i}^{k_{i}}\right]=\left[\prod_{i=1}^{n}b_{i}^{k_{i}'}\right]$,
so $\prod_{i=1}^{n}\left[b_{i}\right]^{k_{i}}=\prod_{i=1}^{n}\left[b_{i}\right]^{k_{i}'}$,
so $k_{i}=k_{i}'$ for $i=1,\ldots,n$, since $B^{*}$ is a basis
for $Q/\sim$. Also, $q=\mu x=\mu'x$, where $x$ is invertible, so
$\mu=\mu'$ by Proposition \ref{s2.2}.$\qedhere$\end{proof}
\begin{cor}
\label{s3.8}For any (finite-dimensional) quantity space $Q$, the
quotient monoid $Q/{\sim}$ is a free abelian group (of finite rank).
\pagebreak{}
\end{cor}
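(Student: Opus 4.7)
The plan is to read off the result directly from the two preceding structural statements. By Proposition~\ref{s3.6}, $Q/{\sim}$ is an abelian group, so it suffices to exhibit a finite $\mathbb{Z}$-basis. Since $Q$ is a (finite-dimensional) quantity space, Definition~\ref{d2.4} furnishes a finite basis $B = \{b_1, \ldots, b_n\}$ for $Q$, and Theorem~\ref{s3.7}(1) then supplies the set $B^{*} = \{[b_1], \ldots, [b_n]\}$ of $n$ distinct dimensions together with the assertion that every element of $Q/{\sim}$ has a unique expansion $\prod_{i=1}^{n}[b_i]^{k_i}$ with integer exponents $k_i$.

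What remains is essentially a semantic step: to recognize that "unique integer-exponent expansion in a multiplicatively written abelian group" is precisely the defining property of a free abelian group on the stated generators. Written additively, the expansion becomes $\sum_{i=1}^{n} k_i [b_i]$ with the $k_i$ uniquely determined, which exhibits the standard isomorphism $Q/{\sim} \cong \mathbb{Z}^n$, with $B^{*}$ as a free-abelian-group basis and finite rank $n$.

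The main (and essentially only) obstacle is this translation between the multiplicative notion of "basis" used in Theorem~\ref{s3.7}(1) and the standard notion of a $\mathbb{Z}$-basis for a free abelian group. It is immediate once one observes that scalars have already been quotiented away: since $q \sim \lambda q$ for every $\lambda \in \mathcal{R}$, the scalar prefactor $\mu$ in the expansion of $q \in Q$ does not reappear in the expansion of $[q] \in Q/{\sim}$, so no spurious multiplicative part obstructs the identification. Thus no further calculation is required; the corollary follows by assembling Proposition~\ref{s3.6} and Theorem~\ref{s3.7}(1) and invoking this identification.
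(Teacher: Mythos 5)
Your proposal is correct and matches the paper's (implicit) argument: the corollary is stated without a separate proof precisely because it follows immediately from Proposition \ref{s3.6} (group structure) together with Theorem \ref{s3.7}(1) (a finite basis $B^{*}$ with unique integer-exponent expansions), which is exactly the assembly you carry out. Your added remark on identifying the multiplicative basis notion with a $\mathbb{Z}$-basis, and on the scalar factor disappearing in $Q/{\sim}$, is a faithful spelling-out of the same route rather than a different one.
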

\begin{cor}
\label{s3.10}Any two finite bases for $Q/{\sim}$ have the same number
of elements, and any two finite bases for $Q$ have the same number
of elements.\end{cor}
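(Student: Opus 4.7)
The plan is to reduce both claims to the invariance of rank for free abelian groups of finite rank, which is a standard fact (it can be proved, for example, by tensoring with $\mathbb{Q}$ to get a $\mathbb{Q}$-vector space and invoking invariance of vector space dimension, or by reducing modulo $2$ and counting the $2^n$ elements of $\left(\mathbb{Z}/2\mathbb{Z}\right)^{n}$).

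First I would handle $Q/{\sim}$. By Corollary \ref{s3.8}, $Q/{\sim}$ is a free abelian group of finite rank. Any two bases of a free abelian group of finite rank have the same cardinality, by the standard fact above; so if $D_{1}^{*}$ and $D_{2}^{*}$ are two finite bases for $Q/{\sim}$, then $\left|D_{1}^{*}\right|=\left|D_{2}^{*}\right|$.

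Next I would handle $Q$ by transferring the equality of cardinalities from $Q/{\sim}$ back to $Q$ via Theorem \ref{s3.7}. Let $B_{1}=\left\{ b_{1},\ldots,b_{m}\right\}$ and $B_{2}=\left\{ b_{1}',\ldots,b_{n}'\right\}$ be two finite bases for $Q$. By Theorem \ref{s3.7}(1), $B_{1}^{*}=\left\{ \left[b_{1}\right],\ldots,\left[b_{m}\right]\right\}$ is a basis for $Q/{\sim}$ with $m$ elements, and $B_{2}^{*}=\left\{ \left[b_{1}'\right],\ldots,\left[b_{n}'\right]\right\}$ is a basis for $Q/{\sim}$ with $n$ elements. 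Applying the first part of the corollary to these two bases of $Q/{\sim}$ gives $m=n$, which is exactly $\left|B_{1}\right|=\left|B_{2}\right|$.

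The main obstacle is really just citing or invoking the invariance of rank for free abelian groups, which is not proved earlier in the excerpt. Everything else is bookkeeping: Theorem \ref{s3.7}(1) already guarantees that the canonical map $b_{i}\mapsto\left[b_{i}\right]$ from a basis of $Q$ to its image in $Q/{\sim}$ is a bijection onto a basis of $Q/{\sim}$, so no further cardinality argument is needed on the $Q$ side beyond pushing the two bases forward and comparing them inside $Q/{\sim}$.
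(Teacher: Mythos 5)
Your proposal is correct and matches the paper's own argument: the paper likewise invokes the standard fact that any two finite bases of a free abelian group have the same cardinality (via Corollary \ref{s3.8}) and relies on Theorem \ref{s3.7} to transfer the count between bases of $Q$ and bases of $Q/{\sim}$. You have merely spelled out explicitly the transfer step and a proof sketch of the rank-invariance fact, both of which the paper leaves implicit or cites as standard.
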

\begin{proof}
Any two finite bases for a free abelian group have the same cardinality.$\qedhere$\end{proof}
\begin{defn}
\label{d3.4}A \emph{dimension group} is a subgroup of a free abelian
group $Q/{\sim}$. $\hphantom{\square}\square$\end{defn}
\begin{cor}
\label{s3.9}Any dimension group is free abelian.\end{cor}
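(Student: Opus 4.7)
The plan is to reduce this immediately to a standard structure theorem about free abelian groups. By Definition \ref{d3.4}, a dimension group is by fiat a subgroup of some quotient $Q/{\sim}$, and Corollary \ref{s3.8} tells us that $Q/{\sim}$ is itself a free abelian group (of finite rank, when $Q$ is finite-dimensional). So the corollary reduces to the classical fact that every subgroup of a free abelian group is again free abelian.

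Concretely, I would write a one-line argument: let $G$ be a dimension group; then by Definition \ref{d3.4} there exists a quantity space $Q$ with $G \leq Q/{\sim}$, and by Corollary \ref{s3.8} the ambient group $Q/{\sim}$ is free abelian, so $G$ is free abelian as a subgroup of a free abelian group. If the author wishes to make this self-contained rather than invoking a black-box theorem, the finite-rank case can be proved by an elementary induction on the rank $n$ of $Q/{\sim}$: picking a basis $\{[b_1],\ldots,[b_n]\}$, project $G$ onto the last coordinate to obtain a subgroup of $\mathbb{Z}$, which is cyclic; split off a generator (if the image is nontrivial) and apply the inductive hypothesis to the kernel, which lies in the free abelian group generated by $\{[b_1],\ldots,[b_{n-1}]\}$.

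There is no real obstacle here: the substantive work has already been done in Theorem \ref{s3.7} and Corollary \ref{s3.8}, where the free abelian structure of $Q/{\sim}$ was established. The present statement is essentially a packaging corollary, recording that the class of groups arising as dimension groups is closed under passage to subgroups and coincides (at least in the finite-rank case) with the class of finitely generated free abelian groups. The only choice to make is stylistic: whether to simply cite the subgroup theorem for free abelian groups or to include the short inductive proof sketched above.
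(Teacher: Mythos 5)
Your argument is correct and is exactly the paper's: by Definition \ref{d3.4} a dimension group is a subgroup of the free abelian group $Q/{\sim}$ (Corollary \ref{s3.8}), and the classical fact that any subgroup of a free abelian group is free abelian finishes it. The optional inductive proof you sketch is a fine elaboration but not needed; the paper simply cites the subgroup theorem as you do in your one-line version.
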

\begin{proof}
Any subgroup of a free abelian group is free abelian.$\qedhere$
\end{proof}

\subsection{Dependent and independent dimensions and quantities}

Since dimension groups are free abelian groups, many notions from
group theory are applicable, and I shall introduce some concepts that
will be used later.
\begin{defn}
\label{d3.5}Let $G$ be a dimension group, and let $\mathfrak{S}=\left\{ \mathfrak{d}_{1},\ldots,\mathfrak{d}_{n}\right\} $
be a set of dimensions in $G$. $\mathfrak{d}\in G$ is said to be
\emph{dependent} on $\mathfrak{S}$ (or on $\mathfrak{d}_{1},\ldots,\mathfrak{d}_{n}$)
if and only if there are integers $k\neq0,k_{1},\dots,k_{n}$ such
that 
\[
\mathfrak{d}^{k}=\prod_{i=1}^{n}\mathfrak{d}_{i}^{k_{i}},
\]
where by convention $\prod_{i=1}^{0}\mathfrak{d}_{i}^{k_{i}}=\left[1_{Q}\right]$,
and $\left[1_{Q}\right]$ is dependent on the empty set of dimensions.
We may assume that $k>0$ without loss of generality.

Also, $\mathfrak{S}$ is said to be a set of \emph{independent} dimensions
if and only if no $\mathfrak{\mathfrak{d}}_{i}\in\mathfrak{S}$ is
dependent on $\mathfrak{S}-\left\{ \mathfrak{d}_{i}\right\} $, or
equivalently if and only if 
\[
\prod_{i=1}^{n}\mathfrak{d}_{i}^{k_{i}}=\left[1_{Q}\right]
\]
implies that $k_{i}=0$ for $i=1,\ldots,n$. Finally, a set of dimensions
$\mathfrak{S}\subset\mathfrak{T}$ is said to be a \emph{maximal}
set of independent dimensions in $\mathfrak{T}$ if and only if $\mathfrak{S}$
is a set of independent dimensions and any $\mathfrak{d}_{i}\in\mathfrak{T}-\mathfrak{S}$
is dependent on $\mathfrak{S}$. $\hphantom{\square}\square$ 
\end{defn}
Note that $\mathfrak{S}$ is a maximal set of independent dimensions
in $\mathfrak{T}$ if and only if $\mathfrak{S}$ is a maximal set
of independent dimensions in the dimension group $G$ generated by
$\mathfrak{T}$, so that every $\mathfrak{d}\in G$ is dependent on
$\mathfrak{S}$.

Recall that any maximal set of independent elements in a free abelian
group $G$, in particular a dimension group, has the same number $r$
of elements, called the \emph{rank} of $G$. Every basis for a free
abelian group is obviously a maximal set of independent elements in
$G$, but a maximal set $S$ of independent elements in $G$ does
not necessarily generate $G$, so $S$ is a basis if and only if $S$
generates $G$.

The notions of dependence and (maximal) sets of independent elements
introduced above can be defined for quantity spaces as well. Consider
a quantity space $Q$ over $\mathcal{R}$; a quantity $q\in Q$ is
said to be \emph{(dimensionally) dependent} on a set of invertible
quantities $\left\{ q_{1},\ldots,q_{n}\right\} $ in $Q$ if and only
if there are integers $k>0,k_{1},\dots,k_{n}$ and some $\lambda\in\mathcal{R}$
such that 
\[
q^{k}=\lambda\prod_{i=1}^{n}q_{i}^{k_{i}},
\]
where by convention $\prod_{i=1}^{0}q_{i}^{k_{i}}=1_{Q}$, and $1_{Q}$
is dependent on the empty set of quantities.

Using this notion of a dependent quantity, we can define a set of
independent invertible quantities and a maximal set of independent
invertible quantities in the same way as the corresponding concepts
for dimensions. Equivalently, $\left\{ q_{1},\ldots,q_{n}\right\} $
is a set of independent invertible quantities in $Q$ if and only
if 
\[
\lambda\prod_{i=1}^{n}q{}_{i}^{k_{i}}=1_{Q},
\]
implies that $k_{i}=0$ for $i=1,\ldots,n$ (and $\lambda=1$).

When proving Theorem \ref{s3.7}, we also proved that if $q_{1},\ldots,q_{n}$
are invertible then $q_{1},\ldots,q_{n}$ are independent if and only
if $\left[q_{1}\right],\ldots,\left[q_{n}\right]$ are independent.
The following broader statement can be proved similarly. 
\begin{thm}
\label{s3.11}Let $Q$ be a quantity space, and let $q_{1},\ldots,q_{n}$
be invertible quantities in $Q$. Then $q\in Q$ is dependent on $q_{1},\ldots,q_{n}$
if and only if $\left[q\right]$ is dependent on $\left[q_{1}\right],\ldots,\left[q_{n}\right]$,
and $\left\{ q_{1},\ldots,q_{n}\right\} $ is a (maximal) set of independent
quantities in $Q$ if and only if $\left\{ \left[q_{1}\right],\ldots,\left[q_{n}\right]\right\} $
is a (maximal) set of independent dimensions in $Q/\!\sim$. 
\end{thm}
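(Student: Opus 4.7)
The plan is to lift each statement about the dimensions $[q_i]$ to the corresponding statement about the invertible quantities $q_i$, using Proposition \ref{s3.3} in its strengthened form (noted in the discussion following its proof): if $p \sim p'$ and $p$ is invertible, then $p' = \lambda p$ for some $\lambda \in \mathcal{R}$. Because each $q_i$ is invertible, every monomial $\prod_{i=1}^{n} q_i^{k_i}$ is invertible as well, and this is exactly what makes the lifting possible.

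First I would establish the dependence equivalence. The forward direction is immediate: applying $[\,\cdot\,]$ to $q^k = \lambda \prod q_i^{k_i}$ yields $[q]^k = \prod [q_i]^{k_i}$, since $[\lambda x] = [x]$. Conversely, from $[q]^k = \prod [q_i]^{k_i}$ one obtains $q^k \sim \prod q_i^{k_i}$, and since the right-hand side is invertible the lifted Proposition \ref{s3.3} supplies a scalar $\lambda$ with $q^k = \lambda \prod q_i^{k_i}$, which is exactly the definition of $q$ being dependent on $q_1,\ldots,q_n$.

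Independence and maximality then follow quickly from part (a). For independence, the nontrivial direction assumes the $[q_i]$ are independent and starts from $\lambda \prod q_i^{k_i} = 1_Q$; passing to classes gives $\prod [q_i]^{k_i} = [1_Q]$, forcing all $k_i = 0$, after which $\lambda \cdot 1_Q = 1_Q$ combined with Proposition \ref{s2.2} forces $\lambda = 1$. The other direction and both directions of the maximality equivalence reduce directly to part (a), using in addition the observation (visible in the proof of Proposition \ref{s3.6}) that every $\mathfrak{d} \in Q/{\sim}$ contains an invertible representative; this is what allows the clauses ``every invertible $q \in Q$ is dependent on $\{q_1,\ldots,q_n\}$'' and ``every $\mathfrak{d} \in Q/{\sim}$ is dependent on $\{[q_1],\ldots,[q_n]\}$'' to match up.

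The main obstacle I foresee is producing the scalar $\lambda$ in the lifted equality $q^k = \lambda \prod q_i^{k_i}$: this is the only step where invertibility of the $q_i$ is genuinely used, and it is what rules out a purely class-level argument. Everything else is bookkeeping on equivalence classes.
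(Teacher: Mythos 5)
Your proposal is correct and follows essentially the route the paper intends: the paper leaves the proof as ``similar'' to the argument in Theorem \ref{s3.7}, which rests on exactly the ingredients you use --- passing to the quotient kills scalars, the strengthened Proposition \ref{s3.3} lifts $q^{k}\sim\prod q_{i}^{k_{i}}$ (invertible right-hand side) to $q^{k}=\lambda\prod q_{i}^{k_{i}}$, and Proposition \ref{s2.2} forces $\lambda=1$ in the independence step. The reduction of independence and maximality to the dependence equivalence, with the invertible-representative observation from Proposition \ref{s3.6}, is sound.
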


\section{Quantity functions and scalar representations}

The so-called laws of nature and other mathematically described empirical
regularities typically involve relations between quantities -- relations
which can be expressed by means of quantity functions. As noted in
the Introduction, the well-known equation $E=mc^{2}$ may be seen
as expressing a relation between three real numbers which are the
measures of three quantities relative to some system of units of measurement,
but the symbols $E$, $m$ and $c$ can also be seen as directly representing
these three quantities, so that $E=mc^{2}$ is interpreted as a quantity
relation rather than a relation between scalars. This quantity relation
has the form 
\[
q=\Phi\!\left(q_{1},\ldots,q_{n}\right),
\]
where $q,q_{1},\dots,q_{n}$ belong to some quantity space $Q$. These
variables do not range over the entire quantity space, though; each
variable takes values only within a subset of $Q$, namely a dimension
in $Q/{\sim}$. In $E=mc^{2}$, for example, $E$ has dimension 'energy',
$m$ has dimension 'mass', and $c$ has dimension 'velocity'.

\subsection{Basic notions}
\begin{defn}
\label{d4.1}Let $Q$ be a quantity space over $\mathcal{R}$. A function
\[
\Phi:\mathfrak{D}_{1}\times\ldots\times\mathfrak{D}_{n}\rightarrow\mathfrak{D},\quad\left(q_{1},\ldots,q_{n}\right)\mapsto q,
\]
where $\mathfrak{D},\mathfrak{D}_{1},\ldots,\mathfrak{D}_{n}\in Q/{\sim}$,
is called a \emph{(dimensional) quantity function} on $Q$.

A \emph{trivial} quantity function is a quantity function of the form
\[
\left(q_{1},\ldots,q_{n}\right)\mapsto0q.
\]

A \emph{quasiscalar} quantity function is a quantity function of the
form 
\[
\left[1_{Q}\right]\times\ldots\times\left[1_{Q}\right]\rightarrow\left[1_{Q}\right].
\]

A \emph{monomial} quantity function is a quantity function of the
form 
\[
\left(q_{1},\ldots,q_{n}\right)\mapsto\lambda q_{1}^{c_{1}}\cdots q_{n}^{c_{n}},
\]
where $\lambda\in\mathcal{K}$ and $c_{1},\dots.c_{n}$ are integers.$\hphantom{\square}\square$ 
\end{defn}
It is important to be clear about the difference between quantity
functions and scalar functions used to represent quantity functions. 
\begin{defn}
\label{d4.2}Let $Q$ be a quantity space over $\mathcal{R}$, and
let $\Phi:\mathfrak{D}_{1}\times\ldots\times\mathfrak{D}_{n}\rightarrow\mathfrak{D}$
be a quantity function on $Q$. A \emph{scalar representation} of
$\Phi$ relative to a basis $B$ for $Q$ is a function 
\[
\phi_{B}:\mathcal{R}^{n}\rightarrow\mathcal{R},\quad\left(s_{1},\dots,s_{n}\right)\mapsto s
\]
such that 
\[
\phi_{B}\!\left(\mu_{B}\!\left(q_{1}\right),\dots,\mu_{B}\!\left(q_{n}\right)\right)=\mu_{B}\!\left(\Phi\!\left(q_{1},\dots,q_{n}\right)\right)
\]
for any $q_{1},\dots,q_{n}$. (Note that for any dimension $\mathfrak{d}\in Q$
and any basis $\left\{ b_{1},\ldots,b_{n}\right\} $ for $Q$ there
are integers $k_{1},\ldots,k_{n}$ such that $\mu\mapsto\mu\prod_{i=1}^{n}q_{i}^{k_{i}}$
is a bijection $\mathcal{R}\rightarrow\mathfrak{d}$.) 

If, in particular, the scalar representation of $\Phi$ relative to
a basis $B$ is the same for any $B$, we write $\phi_{B}$ as $\phi$,
and we have 
\[
\phi\!\left(\mu_{B}\!\left(q_{1}\right),\dots,\mu_{B}\!\left(q_{n}\right)\right)=\mu_{B}\!\left(\Phi\!\left(q_{1},\dots,q_{n}\right)\right)
\]
for any $q_{1},\dots,q_{n}$ and any $B$. In this case, $\phi$ is
said to be a \emph{covariant} scalar representation of $\Phi$, while
$\Phi$ is said to be \emph{covariantly representable}.$\hphantom{\square}\square$\end{defn}
\begin{prop}
\label{s4.1}Let $Q$ be a quantity space over $\mathcal{R},$ let
$\Phi$ be a quantity function $\mathfrak{D}\rightarrow\mathfrak{D}$
on $Q$ defined by $\Phi\!\left(q\right)=q$ for every $q\in\mathfrak{D}$,
and let $\phi$ be a scalar function $\mathcal{R}\rightarrow\mathcal{R}$
defined by $\phi\!\left(s\right)=s$ for every $s\in\mathcal{R}$.
Then $\phi$ is the covariant scalar representation of $\Phi$.\end{prop}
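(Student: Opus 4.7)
The plan is to verify directly the defining equation of Definition \ref{d4.2} and then observe that the verification is independent of the basis $B$. Since $\Phi$ has one argument, I take $n=1$, $\mathfrak{D}_1 = \mathfrak{D}$, and I need to show that for every basis $B$ of $Q$ and every $q \in \mathfrak{D}$, the identity scalar map $\phi$ satisfies
\[
\phi\!\left(\mu_B\!\left(q\right)\right) = \mu_B\!\left(\Phi\!\left(q\right)\right).
\]

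First I would substitute the two definitions. On the left-hand side, since $\phi(s) = s$ for every $s \in \mathcal{R}$, we have $\phi(\mu_B(q)) = \mu_B(q)$. On the right-hand side, since $\Phi(q) = q$ for every $q \in \mathfrak{D}$, we have $\mu_B(\Phi(q)) = \mu_B(q)$. The two sides are therefore literally equal, so $\phi$ is a scalar representation of $\Phi$ relative to the chosen basis $B$.

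Next I would observe that this argument used nothing about $B$ beyond the fact that it is a basis. Thus the same scalar function $\phi$ serves as a representation of $\Phi$ relative to every basis, which is precisely the condition in Definition \ref{d4.2} for $\phi$ to be the \emph{covariant} scalar representation of $\Phi$. There is no genuine obstacle here: the statement is essentially a tautology because both $\Phi$ and $\phi$ are identity maps, and the only subtlety is remembering to note explicitly the independence from $B$, which is what upgrades ``scalar representation relative to each $B$'' to ``covariant scalar representation''.
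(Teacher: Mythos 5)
Your proof is correct and is essentially the paper's own argument: the paper's one-line proof is exactly the chain $\phi\!\left(\mu_{B}\!\left(q\right)\right)=\mu_{B}\!\left(q\right)=\mu_{B}\!\left(\Phi\!\left(q\right)\right)$ holding for any $B$, which you have simply written out in more detail, including the explicit remark that basis-independence is what makes the representation covariant.
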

\begin{proof}
By definition, $\phi\!\left(\mu_{B}\!\left(q\right)\right)=\mu_{B}\!\left(q\right)=\mu_{B}\!\left(\Phi\!\left(q\right)\right)$
for any $B$.$\qedhere$\end{proof}
\begin{prop}
\label{s4.2}Any quasiscalar function has a covariant scalar representation.\end{prop}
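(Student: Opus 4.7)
The plan is to exploit Theorem \ref{s3.4}: since a quasiscalar function has both domain and codomain contained in $[1_Q]$, every argument and every value has a measure that is independent of the choice of basis. This observation will let me define a single scalar function $\phi$ that works for every $B$.

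First I would establish that the map $q\mapsto\mu(q)$ is a bijection $[1_Q]\to\mathcal{R}$. Indeed, for any $q\in[1_Q]$ we have $q\sim 1_Q$, so Proposition \ref{s3.3} gives $\mu_B(1_Q)\,q=\mu_B(q)\,1_Q$, and since $\mu_B(1_Q)=1$ this reduces to $q=\mu_B(q)\,1_Q$. By Theorem \ref{s3.4} the scalar $\mu_B(q)$ is independent of $B$, so I may denote it $\mu(q)$, and the inverse map is $\lambda\mapsto\lambda\,1_Q$.

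Next I would define the candidate representation
\[
\phi:\mathcal{R}^n\to\mathcal{R},\qquad \phi(s_1,\dots,s_n)=\mu\!\left(\Phi(s_1 1_Q,\dots,s_n 1_Q)\right),
\]
which is well-defined because each $s_i 1_Q\in[1_Q]$, hence $\Phi(s_1 1_Q,\dots,s_n 1_Q)\in[1_Q]$. Note that $\phi$ makes no reference to $B$.

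Finally I would verify the representation identity. For arbitrary $q_1,\dots,q_n\in[1_Q]$, the first step gives $q_i=\mu(q_i)\,1_Q$, so
\[
\phi\!\left(\mu_B(q_1),\dots,\mu_B(q_n)\right)=\mu\!\left(\Phi(\mu(q_1)\,1_Q,\dots,\mu(q_n)\,1_Q)\right)=\mu\!\left(\Phi(q_1,\dots,q_n)\right)=\mu_B\!\left(\Phi(q_1,\dots,q_n)\right)
\]
for every basis $B$, so $\phi$ is a covariant scalar representation of $\Phi$. There is no real obstacle here; the only subtle point is justifying that $\mu$ is a genuine function on $[1_Q]$ rather than a $B$-dependent quantity, which is exactly the content of Theorem \ref{s3.4}.
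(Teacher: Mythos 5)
Your proof is correct and rests on exactly the same key fact as the paper's one-line argument, namely Theorem \ref{s3.4} (basis-independence of measures of quantities in $\left[1_{Q}\right]$). You merely make explicit the construction the paper leaves implicit -- the bijection $q\mapsto\mu\!\left(q\right)$ between $\left[1_{Q}\right]$ and $\mathcal{R}$ and the resulting definition $\phi\!\left(s_{1},\dots,s_{n}\right)=\mu\!\left(\Phi\!\left(s_{1}1_{Q},\dots,s_{n}1_{Q}\right)\right)$ -- which is a useful elaboration but not a different route.
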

\begin{proof}
If $q,q_{i}\in\left[1_{Q}\right]$ then $\mu_{B}\!\left(q\right),\mu_{B}\!\left(q_{i}\right)$
do not depend on $B$.$\qedhere$\end{proof}
\begin{cor}
\label{s4.3}Let $\psi:\mathcal{R\rightarrow R}$ be a scalar function
and let $\Phi_{\psi}:\left[1_{Q}\right]\rightarrow\left[1_{Q}\right]$
be the quantity function defined by $\Phi_{\psi}\!\left(\lambda1_{Q}\right)=\psi\!\left(\lambda\right)1_{Q}$.
Then $\Phi_{\psi}$ has a covariant scalar representation, namely
$\psi$. 
\end{cor}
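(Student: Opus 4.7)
The plan is to show directly that $\psi$ satisfies the defining equation of a covariant scalar representation of $\Phi_\psi$, namely that for every $q \in [1_Q]$ and every basis $B$,
\[
\psi\!\left(\mu_B\!\left(q\right)\right) = \mu_B\!\left(\Phi_\psi\!\left(q\right)\right).
\]
Proposition \ref{s4.2} already guarantees that $\Phi_\psi$, being a quasiscalar function, admits \emph{some} covariant scalar representation; the corollary makes the further claim that this representation coincides with the given $\psi$.

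First I would use the fact that any $q \in [1_Q]$ satisfies $q \sim 1_Q$, so by Proposition \ref{s3.3}, $\mu_B\!\left(1_Q\right) q = \mu_B\!\left(q\right) 1_Q$, and since $\mu_B\!\left(1_Q\right) = 1$, this gives $q = \mu_B\!\left(q\right) 1_Q$. (By Theorem \ref{s3.4}, the scalar $\mu_B\!\left(q\right)$ is moreover independent of $B$.) Writing $\lambda = \mu_B\!\left(q\right)$, the definition of $\Phi_\psi$ then yields
\[
\Phi_\psi\!\left(q\right) = \Phi_\psi\!\left(\lambda 1_Q\right) = \psi\!\left(\lambda\right) 1_Q.
\]

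Second, I would take the measure $\mu_B$ of both sides and apply Proposition \ref{s2.1}(1) together with $\mu_B\!\left(1_Q\right) = 1$:
\[
\mu_B\!\left(\Phi_\psi\!\left(q\right)\right) = \mu_B\!\left(\psi\!\left(\lambda\right) 1_Q\right) = \psi\!\left(\lambda\right) \mu_B\!\left(1_Q\right) = \psi\!\left(\lambda\right) = \psi\!\left(\mu_B\!\left(q\right)\right).
\]
Since $B$ was arbitrary, this establishes that $\psi$ is a scalar representation of $\Phi_\psi$ relative to every basis, hence a covariant scalar representation in the sense of Definition \ref{d4.2}.

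There is no real obstacle here; the argument is a direct verification. The only point that requires a small amount of care is the identification $q = \mu_B\!\left(q\right) 1_Q$ for $q \in [1_Q]$, which is not a convention but a consequence of Proposition \ref{s3.3} applied to the pair $q \sim 1_Q$. Once this step is made explicit, the rest is a one-line computation using the scalar-multiplication property of $\mu_B$.
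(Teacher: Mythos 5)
Your verification is correct and is essentially the argument the paper intends: the corollary is stated without an explicit proof as an immediate consequence of Proposition \ref{s4.2}, and your computation simply makes explicit the identification $q=\mu_{B}\!\left(q\right)1_{Q}$ (via Proposition \ref{s3.3}) and the measure calculation via Proposition \ref{s2.1}(1) that justify it. Nothing is missing; the parenthetical appeal to Theorem \ref{s3.4} is not even needed, since checking the defining equation for every basis $B$ with the single function $\psi$ already gives covariance.
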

Thus we may identify any scalar function $\psi$ with a corresponding
quasiscalar function $\Phi_{\psi}$, and every quasiscalar function
$\Phi$ has the form $\lambda1_{Q}\mapsto\psi_{\Phi}\!\left(\lambda\right)\!1_{Q}$
for some scalar function $\psi_{\Phi}$.

\subsection{Scalar representations of composite quantity functions}

Let \linebreak{}
 $\Phi:\mathfrak{D}_{1}\times\ldots\times\mathfrak{D}_{n}\rightarrow\mathfrak{D}$
and $\Psi:\mathfrak{D}_{1}\times\ldots\times\mathfrak{D}_{n}\rightarrow\mathfrak{D}$
be quantity functions and define $\lambda\Phi$ by $\lambda\Phi\!\left(q_{1},.\dots,q_{n}\right)=\lambda\left(\Phi\!\left(q_{1},.\dots,q_{n}\right)\right)$,
and $\Phi+\Psi$ by $\left(\Phi+\Psi\right)\!\left(q_{1},.\dots,q_{n}\right)=\Phi\!\left(q_{1},.\dots,q_{n}\right)+\Psi\!\left(q_{1},.\dots,q_{n}\right)$.
Also, let scalar products $\lambda\phi$ and sums $\phi+\psi$ of
scalar functions be defined in the same way.
\begin{prop}
\label{s4.4}Let $\phi$ and $\psi$ be the covariant scalar representations
of $\Phi$ and $\Psi$, respectively. Then (1) $\lambda\phi$ is the
covariant scalar representation of $\lambda\Phi$, and (2) $\phi+\psi$
is the covariant scalar representation of $\Phi+\Psi$.\end{prop}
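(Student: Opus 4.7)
The plan is to prove both parts by direct unfolding of the definition of covariant scalar representation, with the heavy lifting done by results already established on how $\mu_B$ behaves under scalar multiplication and quantity addition. Since covariant representation is a pointwise equation that must hold for every basis $B$ and every tuple $(q_1,\ldots,q_n)$, in each case I would fix an arbitrary $B$ and arbitrary $q_1,\ldots,q_n$ in the common domain $\mathfrak{D}_1\times\cdots\times\mathfrak{D}_n$, evaluate both sides, and match them.

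For part (1), I would compute $(\lambda\phi)(\mu_B(q_1),\ldots,\mu_B(q_n))$ by first pulling out the $\lambda$ from the definition of $\lambda\phi$, then applying the covariance hypothesis on $\phi$ to rewrite the result as $\lambda\,\mu_B(\Phi(q_1,\ldots,q_n))$. On the other side, $\mu_B((\lambda\Phi)(q_1,\ldots,q_n)) = \mu_B(\lambda\,\Phi(q_1,\ldots,q_n))$ equals $\lambda\,\mu_B(\Phi(q_1,\ldots,q_n))$ by Proposition \ref{s2.1}(1). The two agree, and since the equality holds for every $B$, $\lambda\phi$ is the covariant representation of $\lambda\Phi$.

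For part (2), I would unfold $(\phi+\psi)(\mu_B(q_1),\ldots,\mu_B(q_n))$ as $\phi(\mu_B(q_1),\ldots,\mu_B(q_n)) + \psi(\mu_B(q_1),\ldots,\mu_B(q_n))$ and apply covariance of $\phi$ and $\psi$ individually to rewrite this as $\mu_B(\Phi(q_1,\ldots,q_n)) + \mu_B(\Psi(q_1,\ldots,q_n))$. The only subtlety is that this sum of scalars must match $\mu_B(\Phi(q_1,\ldots,q_n) + \Psi(q_1,\ldots,q_n))$; this is exactly Proposition \ref{s3.5}, and it is applicable precisely because $\Phi(q_1,\ldots,q_n)$ and $\Psi(q_1,\ldots,q_n)$ both lie in the common codomain $\mathfrak{D}$, hence are equidimensional, so their quantity sum is defined and measure is additive.

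I do not expect any real obstacle: both parts reduce to a single-line chain of identities once the correct prior result is cited. The only point requiring care is noting explicitly, in part (2), why addition of the two quantities is legal in the first place — namely, shared dimension $\mathfrak{D}$ — which is what licenses the invocation of Proposition \ref{s3.5}. Everything else is pure bookkeeping.
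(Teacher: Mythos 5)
Your proposal is correct and follows essentially the same route as the paper: both parts are proved by unfolding the definitions and invoking Proposition \ref{s2.1}(1) for scalar multiplication and Proposition \ref{s3.5} for additivity of $\mu_{B}$ on equidimensional quantities. Your explicit remark that $\Phi(q_{1},\ldots,q_{n})$ and $\Psi(q_{1},\ldots,q_{n})$ share the codomain $\mathfrak{D}$, which licenses the quantity sum, is a point the paper leaves implicit but is the same underlying argument.
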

\begin{proof}
Set $\mu_{B}\!\left(q_{i}\right)=s_{i}$.\\
(1). $\lambda\phi\!\left(s_{1},.\dots,s_{n}\right)=\lambda\left(\phi\!\left(s_{1},\ldots,s_{n}\right)\right)=\lambda\left(\mu_{B}\!\left(\Phi\!\left(q_{1},.\dots,q_{n}\right)\right)\right)$\\
$\phantom{(1).}=\mu_{B}\!\left(\lambda\left(\Phi\!\left(q_{1},.\dots,q_{n}\right)\right)\right)=\mu_{B}\!\left(\lambda\Phi\!\left(q_{1},.\dots,q_{n}\right)\right)$.\\
(2). $\left(\phi+\psi\right)\!\left(s_{1},.\dots,s_{n}\right)=\phi\!\left(s_{1},.\dots,s_{n}\right)+\psi\!\left(s_{1},.\dots,s_{n}\right)$\\
$\phantom{(1).}=\mu_{B}\!\left(\Phi\!\left(q_{1},\ldots.q_{n}\right)\right)+\mu_{B}\!\left(\Psi\!\left(q_{1},\ldots.q_{n}\right)\right)=\mu_{B}\!\left(\Phi\!\left(q_{1},\ldots.q_{n}\right)+\Psi\!\left(q_{1},\ldots.q_{n}\right)\right)$\\
$\phantom{(1).}=\mu_{B}\!\left(\left(\Phi+\Psi\right)\!\left(q_{1},\ldots.q_{n}\right)\right)\qedhere$.
\end{proof}
Now let $\Phi:\mathfrak{D}_{1}\times\ldots\times\mathfrak{D}_{n}\rightarrow\mathfrak{D}$
and $\Psi:\mathfrak{D}_{1}'\times\ldots\times\mathfrak{D}_{m}'\rightarrow\mathfrak{D'}$
be quantity functions, let $\Phi\Psi$ be defined by $\Phi\Psi\!\left(p_{1},.\dots,p_{n},q_{1},\dots,q_{m}\right)=\Phi\!\left(p_{1},.\dots,p_{n}\right)\Psi\!\left(q_{1},\dots,q_{m}\right)$,
and let $\Phi^{-1}$ be defined by $\Phi^{-1}\!\left(p_{1},.\dots,p_{n}\right)=\Phi\!\left(p_{1},.\dots,p_{n}\right)^{-1}=1_{Q}/\Phi\!\left(p_{1},.\dots,p_{n}\right)$.
Also, let products and inverses of scalar functions be defined in
the same way.
\begin{prop}
\label{s4.5}If $\phi$ is the covariant scalar representation of
$\Phi$ and $\psi$ is the covariant representation of $\Psi$ then
(1) $\phi\psi$ is the covariant scalar representation of $\Phi\Psi$
and (2) $\phi^{-1}$ is the covariant scalar representation of $\Phi^{-1}$.\end{prop}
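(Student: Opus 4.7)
The plan is to mirror the proof of Proposition \ref{s4.4} in its structure, but with the multiplicative parts of Proposition \ref{s2.1} in place of the additive behavior of $\mu_B$. The two ingredients I will rely on are Proposition \ref{s2.1}(2), which says $\mu_B(qq') = \mu_B(q)\mu_B(q')$, and Proposition \ref{s2.1}(3), which says $\mu_B(q^{-1}) = \mu_B(q)^{-1}$ whenever $q$ is invertible. In both cases the task is then just to chain together the definition of the composite (of quantity functions or scalar functions), the assumed covariance of $\phi$ and $\psi$, and the relevant clause of Proposition \ref{s2.1}, while noting that the resulting identity holds for every basis $B$, so the resulting scalar function is indeed covariant.

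For part (1), I would set $s_i = \mu_B(p_i)$ and $t_j = \mu_B(q_j)$ and compute, in order: $(\phi\psi)(s_1,\ldots,s_n,t_1,\ldots,t_m) = \phi(s_1,\ldots,s_n)\,\psi(t_1,\ldots,t_m)$ by definition of $\phi\psi$; then $= \mu_B(\Phi(p_1,\ldots,p_n))\,\mu_B(\Psi(q_1,\ldots,q_m))$ by the assumed covariance of $\phi$ and $\psi$ respectively; then $= \mu_B\bigl(\Phi(p_1,\ldots,p_n)\,\Psi(q_1,\ldots,q_m)\bigr)$ by Proposition \ref{s2.1}(2); and finally $= \mu_B\bigl((\Phi\Psi)(p_1,\ldots,p_n,q_1,\ldots,q_m)\bigr)$ by definition of $\Phi\Psi$. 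Since this equality holds for arbitrary $B$, $\phi\psi$ is a covariant scalar representation of $\Phi\Psi$.

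For part (2), the same template applies, with one bookkeeping remark: $\Phi^{-1}(p_1,\ldots,p_n)$ is defined only where $\Phi(p_1,\ldots,p_n)$ is invertible, and by Proposition \ref{s2.1}(3) this is exactly the locus where $\mu_B(\Phi(p_1,\ldots,p_n)) = \phi(s_1,\ldots,s_n)$ is nonzero, which is precisely where $\phi^{-1}$ is defined. On this common domain I would write $\phi^{-1}(s_1,\ldots,s_n) = 1/\phi(s_1,\ldots,s_n) = 1/\mu_B(\Phi(p_1,\ldots,p_n))$ using covariance of $\phi$, then apply Proposition \ref{s2.1}(3) to get $\mu_B(\Phi(p_1,\ldots,p_n)^{-1})$, which by definition of $\Phi^{-1}$ equals $\mu_B(\Phi^{-1}(p_1,\ldots,p_n))$.

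I do not expect a genuine obstacle here: both parts are essentially one line of manipulation once one invokes the homomorphism properties of $\mu_B$ from Proposition \ref{s2.1}. The closest thing to a subtlety is the domain issue in (2), which is why I would explicitly point out the equivalence between invertibility of $\Phi(p_1,\ldots,p_n)$ and nonvanishing of $\phi(s_1,\ldots,s_n)$ before dividing.
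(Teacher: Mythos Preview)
Your proof is correct and follows essentially the same line as the paper's own proof: set $s_i=\mu_B(p_i)$, $t_j=\mu_B(q_j)$, unfold the definitions of $\phi\psi$ and $\phi^{-1}$, use covariance of $\phi$ and $\psi$, and apply Proposition~\ref{s2.1}(2) and (3). Your explicit remark about the domain of $\Phi^{-1}$ is a small addition the paper leaves implicit, but otherwise the arguments are identical.
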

\begin{proof}
Set $\mu_{B}\!\left(p_{i}\right)=s_{i}$ and $\mu_{B}\!\left(q_{i}\right)=t_{i}$.\\
(1). $\phi\psi\!\left(s_{1},\dots,s_{n},t_{1},\dots,t_{m}\right)=\phi\!\left(s_{1},\dots,s_{n}\right)\psi\!\left(t_{1},\dots,t_{m}\right)$\\
$\phantom{(1).}=\mu_{B}\!\left(\Phi\!\left(p_{1},\dots,p_{n}\right)\right)\mu_{B}\!\left(\Psi\!\left(q_{1},\dots,q_{m}\right)\right)=\mu_{B}\!\left(\Phi\!\left(p_{1},\dots,p_{n}\right)\Psi\!\left(q_{1},\dots,q_{m}\right)\right)$\\
$\phantom{(1).}=\mu_{B}\!\left(\Phi\Psi\!\left(p_{1},\dots,p_{n},q_{1},\dots,q_{m}\right)\right)$.\\
(2). $\phi^{-1}\!\left(s_{1},\dots,s_{n}\right)=1/\phi\!\left(s_{1},\dots,s_{n}\right)=1/\mu_{B}\!\left(\Phi\!\left(q_{1},\dots,q_{n}\right)\right)$\\
$\phantom{(1).}=\mu_{B}\!\left(1_{Q}/\Phi\!\left(q_{1},\dots,q_{n}\right)\right)=\mu_{B}\!\left(\Phi^{-1}\!\left(q_{1},\dots,q_{n}\right)\right).\qedhere$
\end{proof}
If $n=m$ and $\mathfrak{D}_{i}=\mathfrak{D}_{i}'$ for $i=1,\ldots,n$
in the definitions of $\Phi$ and $\Psi$, we can set 
\[
\Phi\Psi\!\left(p_{1},.\dots,p_{n}\right)=\Phi\!\left(p_{1},.\dots,p_{n}\right)\Psi\!\left(p_{1},\dots,p_{n}\right)
\]
and define covariant scalar representations $\phi:\mathcal{R}^{n}\rightarrow R$
of $\Phi$ and $\psi:\mathcal{R}^{n}\rightarrow\mathcal{R}$ of $\Psi$
accordingly. $\phi\psi$ is again the covariant scalar representation
of $\Phi\Psi$.
\begin{prop}
\label{s4.8}A monomial function $\left(q_{1},\ldots,q_{n}\right)\mapsto\lambda q_{1}^{k_{1}}\cdot\cdots\cdot q_{n}^{k_{n}}$
has the covariant scalar representation $\left(s_{1},\ldots,s_{n}\right)\mapsto\lambda s_{1}^{k_{1}}\cdot\cdots\cdot s_{n}^{k_{n}}$. \end{prop}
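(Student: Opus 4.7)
The plan is to argue directly from the multiplicativity properties of $\mu_B$ established in Proposition \ref{s2.1}. Define $\phi\colon\mathcal{R}^n\to\mathcal{R}$ by $\phi(s_1,\ldots,s_n)=\lambda s_1^{k_1}\cdots s_n^{k_n}$; the task is to verify that, for every basis $B$ of $Q$ and every admissible tuple $(q_1,\ldots,q_n)$,
\[
\phi(\mu_B(q_1),\ldots,\mu_B(q_n))=\mu_B\!\left(\lambda q_1^{k_1}\cdots q_n^{k_n}\right),
\]
since then the scalar function $\phi$ is independent of $B$ and hence is a covariant representation.

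First I would peel off the leading scalar with Proposition \ref{s2.1}(1), getting $\mu_B(\lambda q_1^{k_1}\cdots q_n^{k_n})=\lambda\,\mu_B(q_1^{k_1}\cdots q_n^{k_n})$. An iterated application of Proposition \ref{s2.1}(2) then distributes $\mu_B$ across the product, yielding $\mu_B(q_1^{k_1}\cdots q_n^{k_n})=\prod_{i=1}^{n}\mu_B(q_i^{k_i})$. It then remains to show $\mu_B(q_i^{k_i})=\mu_B(q_i)^{k_i}$ for each $i$.

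I would handle this factor-by-factor on the sign of $k_i$. If $k_i>0$, iterating Proposition \ref{s2.1}(2) once more gives the result. If $k_i=0$, the convention $q_i^0=1_Q$ together with $\mu_B(1_Q)=1$ handles it. If $k_i<0$, the existence of $q_i^{k_i}$ already requires $q_i$ to be invertible, so Proposition \ref{s2.1}(3) applies to give $\mu_B(q_i^{-1})=\mu_B(q_i)^{-1}$, and a further application of the positive-exponent case to $q_i^{-1}$ yields $\mu_B(q_i^{k_i})=\mu_B(q_i^{-1})^{-k_i}=\mu_B(q_i)^{k_i}$. Assembling the pieces produces the displayed identity, and since the right-hand side depends on $B$ only through the scalars $\mu_B(q_i)$, the same $\phi$ works for every basis.

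The argument is essentially a bookkeeping exercise chaining the three parts of Proposition \ref{s2.1}, and there is no serious obstacle. The only item requiring a moment of care is the case of negative exponents, where one must invoke invertibility of the corresponding $q_i$ so that $\mu_B(q_i)\neq 0$ and the inverse makes sense; this is implicit in the very formulation of a monomial with integer exponents. An alternative, more conceptual plan would build the monomial inductively from coordinate projection functions, applying Propositions \ref{s4.4} and \ref{s4.5} at each step, but I prefer the direct route since it avoids introducing projections that the paper has not formally developed.
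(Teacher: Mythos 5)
Your argument is correct, but it takes a different route from the paper. The paper's proof is a one-liner: it observes that the monomial is assembled from identity functions, scalar multiples, products and inverses, and simply cites Propositions \ref{s4.1}, \ref{s4.4} and \ref{s4.5} -- i.e.\ it reuses the already-established closure properties of covariant scalar representations, which is exactly the ``alternative, more conceptual plan'' you mention and set aside. You instead verify the defining identity $\phi(\mu_B(q_1),\ldots,\mu_B(q_n))=\mu_B(\lambda q_1^{k_1}\cdots q_n^{k_n})$ directly from Proposition \ref{s2.1}, in effect unwinding Propositions \ref{s4.4} and \ref{s4.5} back to the multiplicativity of $\mu_B$. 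What your route buys is self-containment and an explicit treatment of the exponent cases, in particular the negative-exponent case, where you correctly note that the mere formation of $q_i^{k_i}$ forces $q_i$ to be invertible so that Proposition \ref{s2.1}(3) applies and $\mu_B(q_i)\neq 0$ on the scalar side; the paper leaves this implicit inside Proposition \ref{s4.5}(2). What the paper's route buys is brevity and structural economy: it never touches bases or measures again once the representation calculus of Section 4 is in place, at the cost of implicitly invoking products of single-variable identity functions in distinct variables (covered by the stated form of Proposition \ref{s4.5}) rather than spelling that decomposition out. Both arguments are sound; yours is the more elementary bookkeeping version of the same underlying facts.
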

\begin{proof}
Immediate from Propositions \ref{s4.1}, \ref{s4.4} and \ref{s4.5}.$\qedhere$\end{proof}
\begin{prop}
\label{s4.6}If $\phi$ and $\psi$ are the covariant scalar representations
of\linebreak{}
 $\Phi:\mathfrak{D}_{1}\rightarrow\mathfrak{D}_{0}$ and $\Psi:\mathfrak{D}_{2}\rightarrow\mathfrak{D}_{1}$,
respectively, then $\phi\circ\psi$ is the covariant scalar representation
of $\Phi\circ\Psi$.\end{prop}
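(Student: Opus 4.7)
The plan is to verify the defining equation of a covariant scalar representation for $\Phi \circ \Psi$ by simply chaining the two given hypotheses. Fix an arbitrary basis $B$ for $Q$ and an arbitrary $q \in \mathfrak{D}_2$; the goal is to show that
\[
(\phi \circ \psi)\bigl(\mu_B(q)\bigr) = \mu_B\bigl((\Phi \circ \Psi)(q)\bigr),
\]
and that the right-hand side does not secretly depend on $B$ (which is automatic once the left-hand side does not).

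First I would rewrite the right-hand side as $\mu_B(\Phi(\Psi(q)))$. Since $\Psi(q) \in \mathfrak{D}_1$, the covariance of $\phi$ as a scalar representation of $\Phi:\mathfrak{D}_1 \to \mathfrak{D}_0$ applies to the argument $\Psi(q)$, giving
\[
\mu_B\bigl(\Phi(\Psi(q))\bigr) = \phi\bigl(\mu_B(\Psi(q))\bigr).
\]
Then I would apply the covariance of $\psi$ as the scalar representation of $\Psi:\mathfrak{D}_2 \to \mathfrak{D}_1$ to rewrite $\mu_B(\Psi(q)) = \psi(\mu_B(q))$. Substituting gives $\phi(\psi(\mu_B(q))) = (\phi \circ \psi)(\mu_B(q))$, which is exactly what we wanted.

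Because $B$ was arbitrary and neither $\phi$ nor $\psi$ carries a subscript $B$ (they are covariant), the composition $\phi \circ \psi$ likewise yields the same scalar function independent of $B$, so it qualifies as a covariant scalar representation of $\Phi \circ \Psi$ in the sense of Definition~\ref{d4.2}. There is no genuine obstacle here; the only mild point to notice is that the composition $\Phi \circ \Psi$ is well-defined precisely because the codomain $\mathfrak{D}_1$ of $\Psi$ matches the domain of $\Phi$, which is what allows $\Psi(q)$ to be fed into $\phi$'s defining identity at the key step.
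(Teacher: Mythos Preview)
Your proof is correct and is essentially identical to the paper's: both simply chain the defining identities $\psi(\mu_B(q))=\mu_B(\Psi(q))$ and $\phi(\mu_B(\cdot))=\mu_B(\Phi(\cdot))$ to obtain $(\phi\circ\psi)(\mu_B(q))=\mu_B((\Phi\circ\Psi)(q))$, with the only cosmetic difference that the paper writes the chain left-to-right while you unwind it right-to-left.
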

\begin{proof}
By definition, $\phi\circ\psi\left(\mu_{B}\!\left(q\right)\right)=\phi\!\left(\psi\!\left(\mu_{B}\!\left(q\right)\right)\right)=\phi\!\left(\mu_{B}\!\left(\Psi\!\left(q\right)\right)\right)=\mu_{B}\!\left(\Phi\!\left(\Psi\!\left(q\right)\right)\right)$\\
 $=\mu_{B}\!\left(\Phi\circ\Psi\left(q\right)\right)$.$\qedhere$ 
\end{proof}
Consider quantity functions 
\[
\Phi:\mathfrak{D}_{1}\times\ldots\times\mathfrak{D}_{n}\rightarrow\mathfrak{D}_{0},\quad\left(q_{1},\ldots,q_{n}\right)\mapsto q
\]
and 
\begin{gather*}
\Psi_{1}:\mathfrak{D}_{11}\times\ldots\times\mathfrak{D}_{1m_{1}}\rightarrow\mathfrak{D}_{1},\quad\left(q_{11},\ldots,q_{1m_{1}}\right)\mapsto q_{1},\\
\vdots\\
\Psi_{n}:\mathfrak{D}_{n1}\times\ldots\times\mathfrak{D}_{nm_{n}}\rightarrow\mathfrak{D}_{n},\quad\left(q_{n1},\ldots,q_{nm_{n}}\right)\mapsto q_{n}.
\end{gather*}
Define the quantity function $\Phi\circ\left(\Psi_{1},\ldots,\Psi_{n}\right)$
by 
\[
\Phi\circ\left(\Psi_{1},\ldots,\Psi_{n}\right)\left(q_{11},\ldots,q_{nm_{n}}\right)=\Phi\!\left(\Psi_{1}\!\left(q_{11},\ldots,q_{1m_{1}}\right),\ldots,\Psi_{n}\!\left(q_{n1},\ldots,q_{nm_{n}}\right)\right).
\]
Also consider scalar functions $\phi:\mathcal{R}^{n}\rightarrow\mathcal{R}$,
$\psi_{1}:\mathcal{R}^{m_{1}}\rightarrow\mathcal{R},\ldots,\psi_{n}:\mathcal{R}^{m_{n}}\rightarrow\mathcal{R}$,
and define $\phi\circ\left(\psi_{1},\ldots,\psi_{n}\right)$ similarly.
The following more general result can be proved in the same way as
Proposition \ref{s4.6}.
\begin{prop}
\label{s4.7}If $\phi$ and $\psi_{1},\ldots,\psi_{n}$ are the covariant
scalar representations of $\Phi$ and $\Psi_{1},\ldots,\Psi_{n}$,
respectively, then $\phi\circ\left(\psi_{1},\ldots,\psi_{n}\right)$
is the covariant scalar representation of $\Phi\circ\left(\Psi_{1},\ldots,\Psi_{n}\right)$. 
\end{prop}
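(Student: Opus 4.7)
My plan is to mimic the proof of Proposition~\ref{s4.6} almost verbatim, simply threading the argument through all $n$ inner functions at once. The whole thing is a chain of equalities: unfold the outer composition, push the measures inward one layer using covariance of each $\psi_i$, then push through the outer layer using covariance of $\phi$, then refold. Fix an arbitrary basis $B$ for $Q$ and arbitrary arguments $q_{ij}\in\mathfrak{D}_{ij}$, and for brevity write $\bar q_i=(q_{i1},\ldots,q_{im_i})$ and $\bar s_i=(\mu_B(q_{i1}),\ldots,\mu_B(q_{im_i}))$.

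The first step is to expand, by the definition of $\phi\circ(\psi_1,\ldots,\psi_n)$,
\[
\phi\circ(\psi_1,\ldots,\psi_n)(\bar s_1,\ldots,\bar s_n)=\phi\!\left(\psi_1(\bar s_1),\ldots,\psi_n(\bar s_n)\right).
\]
Next, since each $\psi_i$ is the covariant scalar representation of $\Psi_i$, I have $\psi_i(\bar s_i)=\mu_B(\Psi_i(\bar q_i))$, so the right-hand side becomes $\phi\!\left(\mu_B(\Psi_1(\bar q_1)),\ldots,\mu_B(\Psi_n(\bar q_n))\right)$. Applying covariance of $\phi$ with arguments $\Psi_i(\bar q_i)\in\mathfrak{D}_i$, this equals $\mu_B\!\left(\Phi(\Psi_1(\bar q_1),\ldots,\Psi_n(\bar q_n))\right)$, which by the definition of $\Phi\circ(\Psi_1,\ldots,\Psi_n)$ is $\mu_B\!\left(\Phi\circ(\Psi_1,\ldots,\Psi_n)(q_{11},\ldots,q_{nm_n})\right)$.

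Because $B$ was arbitrary and the chain of equalities does not depend on $B$, this shows both that $\phi\circ(\psi_1,\ldots,\psi_n)$ is a scalar representation of $\Phi\circ(\Psi_1,\ldots,\Psi_n)$ relative to every $B$, and that the same scalar function works for every $B$; hence it is the covariant scalar representation, as required. I do not anticipate any genuine obstacle: the only thing to be careful about is the notational bookkeeping for the tuple of tuples of arguments, ensuring that the covariance hypotheses for $\psi_i$ and for $\phi$ are invoked in the right order so that the measures are threaded inward and then outward consistently.
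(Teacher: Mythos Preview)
Your proof is correct and is exactly the approach the paper indicates: the paper does not spell out a proof for this proposition but simply remarks that it ``can be proved in the same way as Proposition~\ref{s4.6},'' and your chain of equalities is precisely that same argument carried out with $n$ inner functions instead of one.
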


\section{The representation theorems for quantity functions}

\subsection{Statement and proof of the representation theorems}

In this section, two representation theorems for quantity functions,
one of them directly corresponding to the so-called $\Pi$ theorem
in dimensional analysis, will be stated and proved. Dimensional analysis
is based on the principle that 'laws of nature' can be numerically
represented without reference to arbitrarily chosen units of measurement
\cite{key-2,key-3}. In other words, quantity functions that represent
'laws of nature' have covariant scalar representations. Quantity functions
which are subject to this restriction have special properties, and
their scalar representations also have special properties, as expressed
by the representation theorems presented below. The following result,
which I call Barenblatt's lemma because it is based on ideas from
\cite{key-2}, is the crucial step in the derivation of the representation
theorems.
\begin{thm}
\label{s5.1}(Barenblatt's lemma). Let $Q$ be a quantity space over
$\mathcal{R}$, let \linebreak{}
$\mathfrak{D},\mathfrak{D}_{1},\ldots,\mathfrak{D}_{m},\mathfrak{B}_{1},\ldots,\mathfrak{B}_{r}$
be dimensions in $Q/{\sim}$ such that $\left\{ \mathfrak{B}_{1},\ldots,\mathfrak{B}_{r}\right\} $
is a basis for the dimension group generated by $\mathfrak{D}_{1},\ldots,\mathfrak{D}_{m},\mathfrak{B}_{1},\ldots,\mathfrak{B}_{r}$,
and consider the non-trivial quantity function 
\[
\Psi:\mathfrak{D}_{1}\times\cdots\times\mathfrak{D}_{m}\times\mathfrak{B}_{1}\times\cdots\times\mathfrak{B}_{r}\rightarrow\mathfrak{D},\qquad\left(p_{1},\dots,p_{m},q_{1},\dots,q_{r}\right)\mapsto p.
\]

\begin{enumerate}
\item If $\Psi$ has a covariant scalar representation then $\mathfrak{D}$
is dependent on $\mathfrak{B}_{1},\ldots,\mathfrak{B}_{r}$.
\item If in addition $\mathfrak{D},\mathfrak{D}_{1},\ldots,\mathfrak{D}_{m}=\left[1_{Q}\right]$
then there is a quasiscalar function 
\[
\Phi:\mathfrak{D}_{1}\times\cdots\times\mathfrak{D}_{m}\rightarrow\mathfrak{D},\qquad\left(p_{1},\dots,p_{m}\right)\mapsto p
\]
such that 
\[
p=\Psi\!\left(p_{1},\dots,p_{m},q_{1},\dots,q_{r}\right)=\Phi\!\left(p_{1},\dots,p_{m}\right)
\]
for any $p_{i}\in\mathfrak{D}_{i}$ and any invertible $q_{i}\in\mathfrak{B}_{i}$. 
\end{enumerate}
\end{thm}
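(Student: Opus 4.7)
The plan is to exploit the covariance condition through changes of basis that rescale selected basis elements of $Q$. For part (1), I will scale a basis element of $Q/{\sim}$ transverse to the subgroup generated by the $\mathfrak{B}_k$, forcing a constraint on $\mathfrak{D}$ via the non-triviality hypothesis. For part (2), I will instead scale inside this subgroup to eliminate the dependence of $\Psi$ on the $q_k$.

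For part (1), let $H\subset Q/{\sim}$ be the subgroup generated by $\mathfrak{B}_1,\ldots,\mathfrak{B}_r$ and let $\tilde H$ be its pure closure, $\tilde H=\{\mathfrak{g}\in Q/{\sim}:\mathfrak{g}^k\in H\text{ for some }k\neq0\}$. Since $(Q/{\sim})/\tilde H$ is torsion-free and finitely generated, hence free abelian, $\tilde H$ is a direct summand of $Q/{\sim}$. Choose a basis $\{\mathfrak{E}_1,\ldots,\mathfrak{E}_n\}$ of $Q/{\sim}$ such that $\{\mathfrak{E}_1,\ldots,\mathfrak{E}_r\}$ is a basis of $\tilde H$, and lift it to a basis $B=\{e_1,\ldots,e_n\}$ of $Q$ with $[e_j]=\mathfrak{E}_j$ via Theorem \ref{s3.7}. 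Each $\mathfrak{D}_i$ and each $\mathfrak{B}_k$ lies in $H\subset\tilde H$, so its expansion only involves $[e_1],\ldots,[e_r]$, whereas $\mathfrak{D}=\prod_{j=1}^n[e_j]^{c_j}$. To establish dependence it suffices to show $c_j=0$ for every $j>r$. Suppose, to the contrary, that $c_{j_0}\neq0$ for some $j_0>r$. Replacing $e_{j_0}$ by $\kappa e_{j_0}$ produces a new basis $B'$; the measures of any $p_i\in\mathfrak{D}_i$ and any invertible $q_k\in\mathfrak{B}_k$ are unaffected, while $\mu_{B'}(\Psi(\ldots))=\mu_B(\Psi(\ldots))/\kappa^{c_{j_0}}$. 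Applying covariance to both $B$ and $B'$ yields $\kappa^{c_{j_0}}\mu_B(\Psi(\ldots))=\mu_B(\Psi(\ldots))$; by non-triviality of $\Psi$ we can select inputs with $\mu_B(\Psi(\ldots))\neq0$, whence $\kappa^{c_{j_0}}=1$ for every invertible $\kappa$, forcing $c_{j_0}=0$ under the standing assumption that $\mathcal{R}^\times$ contains elements of infinite order. The resulting contradiction proves $\mathfrak{D}\in\tilde H$, i.e., some non-zero power of $\mathfrak{D}$ lies in $H$, which is exactly the asserted dependence on $\mathfrak{B}_1,\ldots,\mathfrak{B}_r$.

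For part (2), retain the basis $B$. The extra hypothesis $\mathfrak{D}_i=\mathfrak{D}=[1_Q]$ gives $p_i=\mu_B(p_i)\,1_Q$ and $\Psi(\ldots)=\mu_B(\Psi(\ldots))\,1_Q$, with both measures basis-independent by Theorem \ref{s3.4}. Writing $\mathfrak{B}_k=\prod_{j=1}^r[e_j]^{a_{kj}}$, the integer matrix $A=(a_{kj})$ satisfies $|\det A|=[\tilde H:H]>0$, since the $\mathfrak{B}_k$ generate a finite-index subgroup of $\tilde H$. Now replace each $e_j$ with $1\leq j\leq r$ by $\kappa_j e_j$: the measures of the $p_i$ and of $\Psi(\ldots)$ remain fixed, while $\mu(q_k)$ is divided by $\eta_k=\prod_j\kappa_j^{a_{kj}}$. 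For positive $\kappa_j$ the map $(\log\kappa_j)\mapsto(\log\eta_k)$ is given by $A$ acting on $\mathbb{R}^r$, which is invertible, so $(\eta_1,\ldots,\eta_r)$ can be prescribed arbitrarily in $(\mathbb{R}_{>0})^r$. Covariance then delivers $\psi(s_1,\ldots,s_m,t_1/\eta_1,\ldots,t_r/\eta_r)=\psi(s_1,\ldots,s_m,t_1,\ldots,t_r)$ for all such $\eta_k$, so $\psi$ is independent of its last $r$ arguments on invertible inputs. Consequently $\mu_B(\Psi(p_1,\ldots,p_m,q_1,\ldots,q_r))$ depends only on $\mu_B(p_1),\ldots,\mu_B(p_m)$, and since measures determine elements of $[1_Q]$ we may unambiguously set $\Phi(p_1,\ldots,p_m)=\Psi(p_1,\ldots,p_m,q_1,\ldots,q_r)$ for any invertible $q_k\in\mathfrak{B}_k$, producing the required quasiscalar function.

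The main obstacle is the structural preparation for part (1): isolating a single basis direction of $Q/{\sim}$ that does not appear in any input dimension but does appear in the output dimension $\mathfrak{D}$. This hinges on the fact that the pure closure $\tilde H$ of $H$ is a direct summand of $Q/{\sim}$, which in turn uses torsion-freeness of the quotient $(Q/{\sim})/\tilde H$. Once the basis is aligned with $\tilde H$, the scaling arguments for both parts reduce to bookkeeping combined with non-triviality (for part 1) and invertibility of $A$ (for part 2).
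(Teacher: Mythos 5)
Your argument is correct in substance, but it takes a genuinely different route from the paper's. The paper proves (1) contrapositively: it picks invertible representatives $b\in\mathfrak{D},b_1\in\mathfrak{B}_1,\ldots,b_r\in\mathfrak{B}_r$, assumes these dimensions independent, and compares the scalar relations relative to the two bases $\left\{ b,b_1,\ldots,b_r\right\} $ and $\left\{ \lambda b,b_1,\ldots,b_r\right\} $ of the scalable submonoid they generate; for (2) it replaces $q_{1}$ by $q_{1}'$ and compensates with the basis change $b_{1}\mapsto\left(\lambda'/\lambda\right)b_{1}$, one coordinate at a time. You instead work with bases of all of $Q$, adapted via the pure closure $\tilde{H}$ of the subgroup $H$ generated by $\mathfrak{B}_{1},\ldots,\mathfrak{B}_{r}$ (using that $\left(Q/{\sim}\right)/\tilde{H}$ is free, so $\tilde{H}$ is a direct summand, and lifting by Theorem \ref{s3.7}), and then rescale basis quantities: transverse to $\tilde{H}$ for (1), inside $\tilde{H}$ with the invertible exponent matrix $A$ for (2). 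What your route buys is fidelity to Definition \ref{d4.2}: covariance there quantifies over bases of $Q$, and your $B$, $B'$ really are such bases, whereas the paper's $B'$ lives in a subspace $Q'$ whose associated subgroup of $Q/{\sim}$ need not be pure, so those basis changes need not extend to $Q$. What the paper's route buys is that its basis is aligned with the $\mathfrak{B}_{k}$ themselves, so arbitrary ratios $\lambda'/\lambda$ (including negative ones) can be absorbed directly, and no structure theory of free abelian groups is needed. Your appeal to an element of infinite order in $\mathcal{R}^{\times}$ is harmless: the paper's own choice $\lambda>1$ presupposes the same kind of real scalar system.

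One caveat concerns part (2) when $\mathcal{R}=\mathbb{R}$. With positive $\kappa_{j}$ you realize every $\left(\eta_{1},\ldots,\eta_{r}\right)\in\left(\mathbb{R}_{>0}\right)^{r}$, which suffices whenever invertible quantities of a fixed dimension have measure ratios in $\mathbb{R}_{>0}$ (so for $\mathcal{R}=\mathbb{R}_{>0}$ or $\mathbb{R}_{\geq0}$, the setting actually used for Theorem \ref{s5.3}). Over $\mathbb{R}$, however, comparing $q_{k}$ and $q_{k}'$ with measures of opposite sign requires negative $\eta_{k}$, and these are reachable by your rescaling only if the sign system governed by $A\bmod2$ is solvable; when $H$ is not pure (e.g.\ $\mathfrak{B}_{1}=\left[e_{1}\right]^{2}$) it need not be, and then your argument only yields constancy of $\psi$ on each sign orthant. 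This is precisely the regime where the paper's proof leans on subspace bases, so the discrepancy reflects a genuine delicacy of the statement over signed scalars rather than a defect peculiar to your approach; still, you should either restrict (2) to scalar systems of non-negative scalars or add an argument for the missing sign patterns.
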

\begin{proof}
(1). Consider a quantity relation 
\begin{equation}
p=\Psi\!\left(p_{1},\ldots,p_{m},q_{1},\dots,q_{r}\right),\label{eq:qrel-0}
\end{equation}
where $\mu_{\mathcal{B}}\!\left(p\right)\neq0$. Let $B=\left\{ b,b_{1},\ldots,b_{r}\right\} $,
where $b\in\mathfrak{D},b_{1}\in\mathfrak{B}_{1},\ldots,b_{r}\in\mathfrak{B}_{r}$,
be a set of invertible quantities in $Q$. If $\mathfrak{D},\mathfrak{B}_{1},\dots,\mathfrak{B}_{r}$
are independent dimensions in $Q/{\sim}$, then $b,b_{1},\ldots,b_{r}$
are independent quantities in $Q$. Let $Q'$ be the subspace of $Q$
generated by $B$ so that $B$ is a basis for $Q'$. Note that $B'=\left\{ \lambda b,b_{1},\ldots,b_{r}\right\} $,
where $\lambda>1$, is another basis for $Q'$.

We clearly have $\mu_{B}\!\left(q_{i}\right)=\mu_{B'}\!\left(q_{i}\right)$
for $i=1,\ldots,r$. Also, by assumption there are integers $c_{ij}$
such that $\mathfrak{D}_{i}=\prod_{j=1}^{m}\mathfrak{B}_{j}^{c_{ij}}$
for $i=1,\ldots,m$, so there are scalars $\mu_{i}\in\mathcal{R}$
such that $p_{i}=\mu_{i}\prod_{j=1}^{n}b_{j}^{c_{ij}}$ for $i=1,\ldots,m$,
so $\mu_{B}\!\left(p_{i}\right)=\mu_{i}\prod_{j=1}^{r}\mu_{B}\!\left(b_{j}\right)^{c_{ij}}=\mu_{i}\prod_{j=1}^{r}\mu_{B'}\!\left(b_{j}\right)^{c_{ij}}=\mu_{B'}\!\left(p_{i}\right)$
for $i=1,\ldots,m$. Hence, the scalar relations corresponding to
(\ref{eq:qrel-0}) relative to $B$ and $B'$, respectively, are 
\begin{gather*}
\mu_{B}\!\left(p\right)=\psi{}_{B}\!\left(s_{1},\ldots,s_{m},t_{1},\ldots,t_{r}\right),\\
\mu_{B'}\!\left(p\right)=\mu_{B}\!\left(p\right)/\lambda=\psi_{B'}\!\left(s_{1},\ldots,s_{m},t_{1},\ldots,t_{r}\right),
\end{gather*}
where $s_{i}=\mu_{B}\!\left(p_{i}\right)=\mu_{B'}\!\left(p_{i}\right)$
and $t_{i}=\mu_{B}\!\left(q_{i}\right)=\mu_{B'}\!\left(q_{i}\right)$.
As $\mu_{B}\!\left(p\right)\neq0$ and $\lambda>1$ so that $\mu_{B}\!\left(p\right)\neq\mu_{B}\!\left(p\right)/\lambda$,
we conclude that $\psi{}_{B}\neq\psi{}_{B'}$.

Conversely, if $\Psi$ has a covariant scalar representation then
$\mathfrak{D},\mathfrak{B}_{1},\dots,\mathfrak{B}_{r}$ are not independent,
and as $\mathbf{\mathfrak{B}_{1},\dots,\mathfrak{B}_{r}}$ are independent,
$\mathfrak{D}$ is dependent on $\mathbf{\mathfrak{B}_{1},\dots,\mathfrak{B}_{r}}$,
meaning that there are integers $k>0,k_{1},\ldots,k_{r}$ such that
$\mathfrak{D}^{k}=\prod_{i=1}^{r}\mathfrak{B}_{i}^{k_{i}}$.

(2). Consider two quantity relations 
\begin{equation}
p=\Psi\!\left(p_{1},\ldots,p_{m},q_{1},q_{2},\ldots,q_{r}\right),\label{eq:qrel-1}
\end{equation}
\begin{equation}
p'=\Psi\!\left(p_{1},\ldots,p_{m},q_{1}',q_{2},\ldots,q_{r}\right),\label{eq:qrel-2}
\end{equation}
where $q_{1}\neq q_{1}'$. Let $B=\left\{ b_{1},\ldots,b_{r}\right\} $,
where $b_{1}\in\mathfrak{B}_{1},\ldots,b_{r}\in\mathfrak{B}_{r}$,
be a set of invertible quantities in $Q$. Since $\mathfrak{B}_{1},\dots,\mathfrak{B}_{r}$
are independent dimensions in $Q/{\sim}$, $b_{1},\ldots,b_{r}$ are
independent quantities in $Q$. Let $Q'$ be the subspace of $Q$
generated by $B$, so that $B$ is a basis for $Q'$. Let $\lambda,\lambda'$
be defined by $q_{1}=\lambda b_{1},q_{1}'=\lambda'b_{1}$. As $q_{1}$
and $q_{1}'$ are invertible, $\lambda,\lambda'\neq0$. Thus, $B'=\left\{ \left(\lambda'/\lambda\right)b_{1},b_{2},\dots,b_{r}\right\} $
is another basis for $Q'$.

Clearly, $\mu_{B}\!\left(q_{1}\right)=\mu_{B'}\!\left(q_{1}'\right)=\lambda$.
It is also clear that $\mu_{B}\!\left(q_{i}\right)=\mu_{B'}\!\left(q_{i}\right)$
for $i=2,\ldots,r$, and $\mu_{B}\!\left(p_{i}\right)=\mu_{B'}\!\left(p_{i}\right)$
for $i=1,\ldots,m$ since $p_{i}\in\left[1_{Q}\right]$. Thus the
scalar representations of (\ref{eq:qrel-1}) relative to $B$ and
(\ref{eq:qrel-2}) relative to $B'$, respectively, are
\begin{gather*}
\mu_{B}\!\left(p\right)=\psi\left(s_{1},\ldots,s_{m}\lambda,t_{2},\ldots,t_{r}\right),\\
\mu_{B'}\!\left(p'\right)=\psi\left(s_{1},\ldots,s_{m},\lambda,t_{2},\ldots,t_{r}\right),
\end{gather*}
where $s_{i}=\mu_{B}\!\left(p_{i}\right)=\mu_{B'}\!\left(p_{i}\right)$
and $t_{i}=\mu_{B}\!\left(q_{i}\right)=\mu_{B'}\!\left(q_{i}\right)$,
and we have used the fact that $\Psi$ has a covariant scalar representation
$\psi$. Thus, $\mu_{B}\!\left(p\right)=\mu_{B'}\!\left(p'\right)$,
and by assumption $\left[p\right]=\left[p'\right]=\left[1_{Q}\right]$.
Hence, $\mu_{B'}\!\left(p'\right)=\mu_{B}\!\left(p'\right)$, so $\mu_{B}\!\left(p\right)=\mu_{B}\!\left(p'\right)$
and therefore $p=\mu_{B}\!\left(p\right)1_{Q}=\mu_{B}\!\left(p'\right)1_{Q}=p'$.

This means that $p$ does not depend upon $q_{1}$ with $q_{2},\ldots,q_{r}$
fixed, and it is shown similarly that $p$ does not depend on $q_{i}$
with $\ldots,q_{i-1},q_{i+1},\ldots$ fixed. Hence, 
\begin{gather*}
\Psi\!\left(p_{1},\ldots,p_{m},q_{1},\ldots,q_{n}\right)=\Psi\!\left(p_{1},\ldots,p_{m},1_{Q},q_{2},\ldots\right)=\Psi\!\left(p_{1},\ldots,p_{m},1_{Q},1_{Q},q_{3},\ldots\right)\\
=\cdots=\Psi\!\left(p_{1},\ldots,p_{m},1_{Q},\ldots,1_{Q}\right)=\Phi\!\left(p_{1},\ldots,p_{m}\right).\qedhere
\end{gather*}
\end{proof}
\begin{lem}
\label{s5.4}Let $Q$ be a quantity space over $\mathbb{R}_{>0}$
and $\mathfrak{D}$ a dimension in $Q$. If $\mathfrak{p}\in\mathfrak{D}^{k}$
for some integer $k>0$, then there is a unique $p\in\mathfrak{D}$
such that $\mathfrak{p}=p^{k}$. \end{lem}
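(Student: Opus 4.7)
The plan is to reduce the problem to an extraction of a positive $k$th root of a scalar, which exists uniquely in $\mathbb{R}_{>0}$. First I would note that over $\mathcal{R}=\mathbb{R}_{>0}$ every quantity is invertible: in the expansion $q=\mu\prod b_{i}^{k_{i}}$ the scalar $\mu$ lies in $\mathbb{R}_{>0}$, hence $\mu_{B}(q)\neq 0$, and Proposition \ref{s2.1}(3) applies. Consequently, every dimension contains invertible representatives, and Proposition \ref{s3.3} together with the remark following it yields the key tool: whenever $q\sim q'$ with $q$ invertible, there is $\lambda\in\mathbb{R}_{>0}$ with $q'=\lambda q$.

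For existence, I would fix any $d\in\mathfrak{D}$. Then $d^{k}\in\mathfrak{D}^{k}$, and since $\mathfrak{p}\in\mathfrak{D}^{k}$ as well, we have $\mathfrak{p}\sim d^{k}$. Because $d^{k}$ is invertible, the tool above supplies $\lambda\in\mathbb{R}_{>0}$ with $\mathfrak{p}=\lambda d^{k}$. Since $\mathbb{R}_{>0}$ is closed under positive $k$th roots, I can set $p=\lambda^{1/k}d$. Then $p\in\mathfrak{D}$ because $p\sim d$, and a direct computation using property (3) in Definition \ref{d2.2} gives
\[
p^{k}=\bigl(\lambda^{1/k}d\bigr)^{k}=\bigl(\lambda^{1/k}\bigr)^{k}d^{k}=\lambda d^{k}=\mathfrak{p}.
\]

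For uniqueness, suppose $p,p'\in\mathfrak{D}$ with $p^{k}=(p')^{k}=\mathfrak{p}$. Then $p\sim p'$, both are invertible, and so $p'=\kappa p$ for some $\kappa\in\mathbb{R}_{>0}$. Raising to the $k$th power and using associativity of scalar multiplication gives $\kappa^{k}p^{k}=(p')^{k}=p^{k}$. Since $p^{k}=\mathfrak{p}$ is invertible, Proposition \ref{s2.2} forces $\kappa^{k}=1$, and because $\kappa>0$ this yields $\kappa=1$, hence $p'=p$.

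The only step that requires any care is the identification of $\lambda$ such that $\mathfrak{p}=\lambda d^{k}$; this is the one place where the scalar field must admit $k$th roots, and it is precisely the restriction to $\mathbb{R}_{>0}$ (as opposed to $\mathbb{R}$) that makes both existence of $\lambda^{1/k}$ and uniqueness (through positivity of $\kappa$) go through without ambiguity. Everything else is essentially a bookkeeping exercise with Propositions \ref{s2.1}--\ref{s3.3}.
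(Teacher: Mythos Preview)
Your proof is correct and follows essentially the same idea as the paper's: both reduce existence to extracting the positive $k$th root of a scalar in $\mathbb{R}_{>0}$. The only differences are cosmetic---you argue basis-free via Proposition~\ref{s3.3} (writing $\mathfrak{p}=\lambda d^{k}$ for some $d\in\mathfrak{D}$) whereas the paper unwinds this directly in a basis expansion, and you supply an explicit uniqueness argument via Proposition~\ref{s2.2} that the paper leaves implicit in the uniqueness of basis coefficients.
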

\begin{proof}
Let $\left\{ b_{1},\ldots,b_{n}\right\} $ be a basis for $Q$. If
$\mathfrak{D}=\left[q\right]$ then $\mathfrak{D}^{k}=\left[q\right]^{k}=\left[q^{k}\right]$,
so if $q=\mu\prod_{i=1}^{n}b_{i}^{c_{i}}$ then $\mathfrak{D}^{k}=\left[q^{k}\right]=\left[\mu^{k}\prod_{i=1}^{n}b_{i}^{kc_{i}}\right]=\left[\prod_{i=1}^{n}b_{i}^{kc_{i}}\right]$,
so if $\mathfrak{p}\in\mathfrak{D}^{k}$ then $\mathfrak{p}=\nu\prod_{i=1}^{n}b_{i}^{kc_{i}}=\left(\sqrt[k]{\nu}\prod_{i=1}^{n}b_{i}^{c_{i}}\right)^{k}$,
where $\sqrt[k]{\nu}\prod_{i=1}^{n}b_{i}^{c_{i}}\in\left[q\right]=\mathfrak{D}$.$\qedhere$ 
\end{proof}
$p$ is said to be the $k$th root of $\mathfrak{p}$, denoted $\sqrt[k]{\mathfrak{p}}$.
Proposition \ref{s4.7} implies that if $p^{k}$ has a unique root
$\sqrt[k]{p^{k}}$ and the quantity function $\Psi:\left(q_{1},\ldots,q_{n}\right)\mapsto p^{k}$
has the covariant scalar representation $\psi$, then $\Phi$ defined
by $\Phi\left(q_{1},\ldots,q_{n}\right)=\sqrt[k]{\Psi\left(q_{1},\ldots,q_{n}\right)}$
has the covariant scalar representation $\phi$ given by $\phi\left(s_{1},\ldots,s_{n}\right)=\sqrt[k]{\psi\left(s_{1},\ldots,s_{n}\right)}$. 
\begin{thm}
\label{s5.3}(Reformulated $\Pi$ theorem.) Let $Q$ be a quantity
space over $\mathbb{R}_{>0}$, let $\mathfrak{D},\mathfrak{D}_{1},\ldots,\mathfrak{D}_{m},\mathfrak{B}_{1},\ldots,\mathfrak{B}_{r}$
be dimensions in $Q/{\sim}$ such that $\left\{ \mathfrak{B}_{1},\ldots,\mathfrak{B}_{r}\right\} $
is a maximal set of independent dimensions in the dimension group
generated by $\mathfrak{D}_{1},\ldots,\mathfrak{D}_{m},\mathfrak{B}_{1},\ldots,\mathfrak{B}_{r}$,
and let 
\[
\Psi:\mathfrak{D}_{1}\times\cdots\times\mathfrak{D}_{m}\times\mathfrak{B}_{1}\times\cdots\times\mathfrak{B}_{r}\rightarrow\mathfrak{D},\quad\left(p_{1},\dots,p_{m},q_{1},\dots,q_{r}\right)\mapsto p
\]
be a non-trivial quantity function with a covariant scalar representation
\[
\psi:\mathbb{R}_{>0}^{m+r}\rightarrow\mathbb{R}_{>0},\quad\left(s_{1},\dots,s_{m},t_{1},\dots,t_{r}\right)\mapsto s.
\]

Then there are integers $\mathsf{k}>0,\mathsf{k}_{1},\ldots,\mathsf{k}_{r}$
such that $\mathfrak{D}^{\mathsf{k}}=\prod_{j=1}^{r}\mathfrak{B}{}_{j}^{\mathsf{k}_{j}}$
and integers $\mathsf{c}_{i}>0,\mathsf{c}_{i1},\ldots,c_{ir}$ such
that $\mathfrak{D}_{i}^{\mathsf{c}_{i}}=\prod_{j=1}^{r}\mathfrak{B}_{j}^{\mathsf{c}_{ij}}$
for $i=1,\ldots,m$ such that there is
\begin{enumerate}
\item a quasiscalar function $\Phi:$ $\underset{m}{\underbrace{\left[1_{Q}\right]\times\ldots\times\left[1_{Q}\right]}}\rightarrow\left[1_{Q}\right]$
such that 
\[
p^{\mathsf{k}}=\Psi^{\mathsf{k}}\!\left(p_{1},\dots,p_{m},q_{1},\dots,q_{r}\right)=\prod_{j=1}^{r}q_{j}^{\mathsf{k}_{j}}\,\Phi\!\left(\Pi_{1},\ldots,\Pi{}_{m}\right),
\]
where $\Psi^{\mathsf{k}}\!\left(p_{1},\dots,p_{m},q_{1},\dots,q_{r}\right)=\left(\Psi\!\left(p_{1},\dots,p_{m},q_{1},\dots,q_{r}\right)\right)^{\mathsf{k}}$
\\
and $\Pi_{i}=p_{i}^{\mathsf{c}_{i}}\,/\left(\prod_{j=1}^{r}q_{j}^{\mathsf{c}_{ij}}\right)$;
\item a scalar function $\phi:\mathbb{R}_{>0}^{m}\rightarrow\mathbb{R}_{>0}$
such that $\psi^{\mathsf{k}}$, defined by 
\[
s^{\mathsf{k}}=\psi^{\mathsf{k}}\!\left(s_{1},\dots,s_{m},t_{1},\dots,t_{r}\right)=\prod_{j=1}^{r}t_{j}^{\mathsf{k}_{j}}\,\phi\!\left(\pi_{1},\ldots,\pi_{m}\right),
\]
where $\pi_{i}=s_{i}^{\mathsf{c}_{i}}\,/\left(\prod_{j=1}^{r}t_{j}^{\mathsf{c}_{ij}}\right)$,
is a covariant representation of $\Psi^{\mathsf{k}}$.
\end{enumerate}
\end{thm}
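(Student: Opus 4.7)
The plan is to reduce Theorem \ref{s5.3} to Barenblatt's lemma (Theorem \ref{s5.1}(2)) by constructing an auxiliary quantity function on dimensionless inputs, exploiting the fact that over $\mathbb{R}_{>0}$ unique positive roots exist (Lemma \ref{s5.4}). As a first step, since $\{\mathfrak{B}_1,\ldots,\mathfrak{B}_r\}$ is a maximal independent set in the dimension group generated by $\mathfrak{D},\mathfrak{D}_1,\ldots,\mathfrak{D}_m,\mathfrak{B}_1,\ldots,\mathfrak{B}_r$, Definition \ref{d3.5} immediately furnishes integers $\mathsf{k}>0,\mathsf{k}_1,\ldots,\mathsf{k}_r$ and $\mathsf{c}_i>0,\mathsf{c}_{i1},\ldots,\mathsf{c}_{ir}$ with $\mathfrak{D}^{\mathsf{k}}=\prod_{j}\mathfrak{B}_j^{\mathsf{k}_j}$ and $\mathfrak{D}_i^{\mathsf{c}_i}=\prod_{j}\mathfrak{B}_j^{\mathsf{c}_{ij}}$, which is the exponent part of the claim.

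Next, I would reparametrize. Given any $\Pi_i\in[1_Q]$ and any invertible $q_j\in\mathfrak{B}_j$, the quantity $\Pi_i\prod_j q_j^{\mathsf{c}_{ij}}$ lies in $\mathfrak{D}_i^{\mathsf{c}_i}$, so Lemma \ref{s5.4} supplies a unique $p_i\in\mathfrak{D}_i$ satisfying $p_i^{\mathsf{c}_i}=\Pi_i\prod_j q_j^{\mathsf{c}_{ij}}$, equivalently $\Pi_i=p_i^{\mathsf{c}_i}/\prod_j q_j^{\mathsf{c}_{ij}}$. Define
\[
\widetilde{\Psi}(\Pi_1,\ldots,\Pi_m,q_1,\ldots,q_r)=\frac{\Psi^{\mathsf{k}}(p_1,\ldots,p_m,q_1,\ldots,q_r)}{\prod_{j=1}^{r}q_j^{\mathsf{k}_j}}.
\]
Because the numerator lies in $\mathfrak{D}^{\mathsf{k}}=\prod_j\mathfrak{B}_j^{\mathsf{k}_j}$, $\widetilde{\Psi}$ is a well-defined non-trivial quantity function with domain $[1_Q]^m\times\mathfrak{B}_1\times\cdots\times\mathfrak{B}_r$ and codomain $[1_Q]$.

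The central step is to apply Barenblatt's lemma to $\widetilde{\Psi}$. Using Propositions \ref{s4.5}, \ref{s4.7} and \ref{s4.8}, together with the covariant representability of $k$th roots noted after Lemma \ref{s5.4}, one checks that $\widetilde{\Psi}$ inherits the covariant scalar representation
\[
\widetilde{\psi}(\pi_1,\ldots,\pi_m,t_1,\ldots,t_r)=\frac{\psi^{\mathsf{k}}(s_1,\ldots,s_m,t_1,\ldots,t_r)}{\prod_{j}t_j^{\mathsf{k}_j}},\qquad s_i=\left(\pi_i\prod_j t_j^{\mathsf{c}_{ij}}\right)^{1/\mathsf{c}_i}.
\]
Since every input and output dimension of $\widetilde{\Psi}$ other than the $\mathfrak{B}_j$'s equals $[1_Q]$, the set $\{\mathfrak{B}_1,\ldots,\mathfrak{B}_r\}$ is automatically a basis for the dimension group generated by these dimensions, so Theorem \ref{s5.1}(2) yields a quasiscalar function $\Phi:[1_Q]^m\to[1_Q]$ with $\widetilde{\Psi}(\Pi_1,\ldots,\Pi_m,q_1,\ldots,q_r)=\Phi(\Pi_1,\ldots,\Pi_m)$. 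Multiplying through by $\prod_j q_j^{\mathsf{k}_j}$ gives part (1); taking $\mu_B$ of both sides, with $\phi$ the covariant scalar representation of $\Phi$, gives part (2).

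The main obstacle is the covariance verification for $\widetilde{\Psi}$: one must establish that the map sending $(\Pi_i,q_1,\ldots,q_r)$ to the unique $p_i\in\mathfrak{D}_i$ with $p_i^{\mathsf{c}_i}=\Pi_i\prod_j q_j^{\mathsf{c}_{ij}}$ is covariantly representable, so that the $\mu_B(p_i)$ appearing inside $\psi^{\mathsf{k}}$ collapses to an expression in the $\pi_i$'s and $t_j$'s alone. This is precisely where the restriction to $\mathcal{R}=\mathbb{R}_{>0}$ is essential, since Lemma \ref{s5.4} can fail in other scalar systems.
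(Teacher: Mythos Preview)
Your argument is close to the paper's and the overall architecture---reparametrize to dimensionless inputs, verify covariance, apply Theorem~\ref{s5.1}(2)---is right. But there is a genuine gap in your first step. You write that $\{\mathfrak{B}_1,\ldots,\mathfrak{B}_r\}$ is a maximal independent set in the dimension group generated by $\mathfrak{D},\mathfrak{D}_1,\ldots,\mathfrak{D}_m,\mathfrak{B}_1,\ldots,\mathfrak{B}_r$, and then invoke Definition~\ref{d3.5} to get the integers $\mathsf{k},\mathsf{k}_1,\ldots,\mathsf{k}_r$ with $\mathfrak{D}^{\mathsf{k}}=\prod_j\mathfrak{B}_j^{\mathsf{k}_j}$. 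The hypothesis of Theorem~\ref{s5.3}, however, only asserts maximality in the group generated by $\mathfrak{D}_1,\ldots,\mathfrak{D}_m,\mathfrak{B}_1,\ldots,\mathfrak{B}_r$---\emph{without} $\mathfrak{D}$. Nothing in the hypotheses says a priori that $\mathfrak{D}$ is dependent on the $\mathfrak{B}_j$'s; this is exactly what Theorem~\ref{s5.1}(1) is for, and its conclusion depends crucially on $\Psi$ being non-trivial and covariantly representable. (Your later application of Theorem~\ref{s5.1}(2) to $\widetilde{\Psi}$ does not cover this, since $\widetilde{\Psi}$ already has codomain $[1_Q]$.)

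The paper handles this by first passing to $\Psi'(p_1^{\mathsf{c}_1},\ldots,p_m^{\mathsf{c}_m},q_1,\ldots,q_r)=\Psi(p_1,\ldots,p_m,q_1,\ldots,q_r)$, whose domain dimensions $\mathfrak{D}_i^{\mathsf{c}_i}$ now lie in the group generated by the $\mathfrak{B}_j$'s, so that $\{\mathfrak{B}_1,\ldots,\mathfrak{B}_r\}$ is a \emph{basis} for that group and Theorem~\ref{s5.1}(1) applies to yield $\mathfrak{D}^{\mathsf{k}}=\prod_j\mathfrak{B}_j^{\mathsf{k}_j}$. Only then does the paper divide by $\prod_j q_j^{\mathsf{k}_j}$ and change variables to the $\Pi_i$'s (your $\widetilde{\Psi}$ is the paper's $\Xi'$). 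Once you insert this missing appeal to Theorem~\ref{s5.1}(1), the remainder of your proof matches the paper's.
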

\begin{proof}
By assumption, there are integers $\mathsf{c}_{i},\mathsf{c}_{ij}$
such that $\mathfrak{D}_{i}^{\mathsf{c}_{i}}=\prod_{j=1}^{r}\mathfrak{B}_{j}^{\mathsf{c}_{ij}}$
for $i=1,\ldots,m$, and by Lemma \ref{s5.4} there is a unique function,
\[
\Psi':\mathfrak{D}_{1}^{\mathsf{c}_{1}}\times\cdots\times\mathfrak{D}_{m}^{\mathsf{c}_{m}}\times\mathfrak{B}_{1}\times\cdots\times\mathfrak{B}_{r}\rightarrow\mathfrak{D},\quad\left(p_{1}^{\mathsf{c}_{1}},\dots,p_{m}^{\mathsf{c}_{m}},q_{1},\dots,q_{r}\right)\mapsto p
\]
such that 
\[
\Psi'\!\left(p_{1}^{\mathsf{c}_{1}},\dots,p_{m}^{\mathsf{c}_{m}},q_{1},\dots,q_{r}\right)=\Psi\!\left(p_{1},\dots,p_{m},q_{1},\dots,q_{r}\right).
\]

$\Psi'$ has a covariant scalar representation by Proposition \ref{s4.7},
so by Theorem \ref{s5.1} (1), there are integers $\mathsf{k}>0,\mathsf{k}_{1},\ldots,\mathsf{k}_{r}$
such that $\mathfrak{D}^{\mathsf{k}}=\prod_{j=1}^{r}\mathfrak{B}{}_{j}^{\mathsf{k}_{j}}$.
Hence, there is a unique function 
\[
\Xi:\mathfrak{D}_{1}^{\mathsf{c}_{1}}\times\cdots\times\mathfrak{D}_{m}^{\mathsf{c}_{m}}\times\mathfrak{B}_{1}\times\cdots\times\mathfrak{B}_{r}\rightarrow\left[1_{Q}\right],\quad\left(p_{1}^{\mathsf{c}_{1}},\dots,p_{m}^{\mathsf{c}_{m}},q_{1},\dots,q_{r}\right)\mapsto\Pi
\]
such that 
\[
p^{\mathsf{k}}=\left(\Psi'\!\left(p_{1}^{\mathsf{c}_{1}},\dots,p_{m}^{\mathsf{c}_{m}},q_{1},\dots,q_{r}\right)\right)^{\mathsf{k}}=\prod_{j=1}^{r}q_{j}^{\mathsf{k}_{j}}\,\,\Xi\!\left(p_{1}^{\mathsf{c}_{1}},\dots,p_{m}^{\mathsf{c}_{m}},q_{1},\dots,q_{r}\right).
\]
By Proposition \ref{s4.5} and Proposition \ref{s4.8}, $\Xi$ has
a covariant scalar representation, since $\Psi'$ has a covariant
scalar representation.

As $\mathfrak{D}_{i}^{\mathsf{c}_{i}}=\prod_{j=1}^{r}\mathfrak{B}_{j}^{\mathsf{c}_{ij}}$
for $i=1,\ldots,m$, we can define a quantity function 
\[
\Xi':\left[1_{Q}\right]\times\ldots\left[1_{Q}\right]\times\mathfrak{B}_{1}\times\ldots\mathfrak{B}_{r}\rightarrow\left[1_{Q}\right],\quad\left(\Pi_{1},\ldots,\Pi_{m},q_{1},\ldots,q_{r}\right)\mapsto\Pi
\]
by setting 
\begin{gather*}
\Xi'\!\left(\Pi_{1},\dots,\Pi_{m},q_{1},\dots,q_{r}\right)=\\
=\Xi\!\left(\Pi_{1}\prod_{j=1}^{r}q_{j}^{\mathsf{c}_{1j}},\ldots,\Pi{}_{m}\prod_{j=1}^{r}q_{j}^{\mathsf{c}_{mj}},q_{1},\ldots,q_{r}\right)=\Xi\!\left(p_{1}^{\mathsf{c}_{1}},\ldots,p_{m}^{\mathsf{c}_{m}},q_{1},\ldots,q_{r}\right),
\end{gather*}
where $\Pi_{i}=p_{i}^{\mathsf{c}_{i}}\,/\left(\prod_{j=1}^{r}q_{j}^{\mathsf{c}_{ij}}\right)$.

$\Xi'$ has a covariant scalar representation by Proposition \ref{s4.7},
so by Theorem \ref{s5.1} (2), there is a function 
\[
\Phi:\left[1_{Q}\right]\times\ldots\times\left[1_{Q}\right]\rightarrow\left[1_{Q}\right],\qquad\left(\Pi_{1},\ldots,\Pi{}_{m}\right)\mapsto\Pi
\]
such that $\Phi\!\left(\Pi_{1},\ldots,\Pi{}_{m}\right)=\Xi'\!\left(\Pi_{1},\dots,\Pi_{m},q_{1},\dots,q_{r}\right)$.
This means that\linebreak{}
 $\Xi\!\left(p_{1}^{\mathsf{c}_{1}},\dots,p_{m}^{\mathsf{c}_{m}},q_{1},\dots,q_{r}\right)=\Phi\!\left(\Pi_{1},\ldots,\Pi{}_{m}\right)$,
proving (1).

(2) follows easily from Propositions \ref{s4.2}, \ref{s4.5} and
\ref{s4.8}.$\qedhere$ 
\end{proof}
Without loss of generality, one can require that the integers $\mathsf{k},\mathsf{k}_{1},\ldots,\mathsf{k}_{r}$
are relatively prime, and similarly that the integers $\mathsf{c}_{i},\mathsf{c}_{i1},\ldots,\mathsf{c}_{ir}$
are relatively prime for $i=1,\ldots,m$. 
\begin{cor}
\label{s5.5}(Conventional $\Pi$ theorem.) Under the same assumptions
as in Theorem \ref{s5.3}, there are integers $\mathsf{k}>0,\mathsf{k}_{j}$
and $\mathsf{c}_{i}>0,\mathsf{c}_{ij}$ for $i=1,\ldots,m$ and $j=1,\ldots,r$
such that there is
\begin{enumerate}
\item a quasiscalar function $\varPhi:$$\underset{m}{\underbrace{\left[1_{Q}\right]\times\ldots\times\left[1_{Q}\right]}}\rightarrow\left[1_{Q}\right]$
such that 
\[
p=\Psi\!\left(p_{1},\dots,p_{m},q_{1},\dots,q_{r}\right)=\sqrt[\mathsf{k}]{\prod_{j=1}^{r}q_{j}^{\mathsf{k}_{j}}}\,\varPhi\!\left(\Pi{}_{1},\ldots,\Pi_{m}\right),
\]
where $\Pi_{i}=p_{i}^{\mathsf{c}_{i}}\,/\left(\prod_{j=1}^{r}q_{j}^{\mathsf{c}_{ij}}\right)$;
\item a scalar function $\varphi:\mathbb{R}_{>0}^{m}\rightarrow\mathbb{R}_{>0}$
such that $\psi$, defined by 
\[
s=\psi\!\left(s_{1},\dots,s_{m},t_{1},\dots,t_{r}\right)=\prod_{j=1}^{r}t_{j}^{\mathsf{k}_{j}/\mathsf{k}}\,\varphi\!\left(\pi_{1},\ldots,\pi_{m}\right),
\]
where $\pi_{i}=s_{i}^{\mathsf{c}_{i}}\,/\left(\prod_{j=1}^{r}t_{j}^{\mathsf{c}_{ij}}\right)$,
is a covariant representation of $\Psi$. 
\end{enumerate}
\end{cor}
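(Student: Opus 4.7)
The plan is to reduce the corollary directly to Theorem \ref{s5.3} by extracting $\mathsf{k}$-th roots on both the quantity and scalar sides, using the fact that $Q$ is over $\mathbb{R}_{>0}$ so that Lemma \ref{s5.4} applies.

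First, I would invoke Theorem \ref{s5.3}(1) to obtain integers $\mathsf{k}>0,\mathsf{k}_1,\ldots,\mathsf{k}_r$ and $\mathsf{c}_i>0,\mathsf{c}_{i1},\ldots,\mathsf{c}_{ir}$ together with a quasiscalar $\Phi$ satisfying $\mathfrak{D}^{\mathsf{k}}=\prod_j \mathfrak{B}_j^{\mathsf{k}_j}$, $\mathfrak{D}_i^{\mathsf{c}_i}=\prod_j \mathfrak{B}_j^{\mathsf{c}_{ij}}$, and
\[
p^{\mathsf{k}} \;=\; \prod_{j=1}^{r} q_j^{\mathsf{k}_j}\,\Phi(\Pi_1,\ldots,\Pi_m).
\]
These integers are exactly those advertised in the corollary. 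Since $\prod_j q_j^{\mathsf{k}_j}\in\mathfrak{D}^{\mathsf{k}}$, Lemma \ref{s5.4} produces a unique $\mathsf{k}$-th root $u := \sqrt[\mathsf{k}]{\prod_j q_j^{\mathsf{k}_j}}\in\mathfrak{D}$. Using Corollary \ref{s4.3}, I identify $\Phi(\Pi_1,\ldots,\Pi_m)$ with the positive scalar $\alpha\cdot 1_Q$ for some $\alpha\in\mathbb{R}_{>0}$, and verify by a short induction from Definition \ref{d2.2}(2)--(3) that $(\lambda u)^{\mathsf{k}}=\lambda^{\mathsf{k}}u^{\mathsf{k}}$ for any $\lambda\in\mathbb{R}_{>0}$. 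Then
\[
\bigl(\sqrt[\mathsf{k}]{\alpha}\,u\bigr)^{\mathsf{k}} \;=\; \alpha\,u^{\mathsf{k}} \;=\; \alpha\prod_{j=1}^{r} q_j^{\mathsf{k}_j} \;=\; p^{\mathsf{k}},
\]
so by the uniqueness clause of Lemma \ref{s5.4}, $p=\sqrt[\mathsf{k}]{\alpha}\,u$. Defining $\varPhi(\Pi_1,\ldots,\Pi_m):=\sqrt[\mathsf{k}]{\Phi(\Pi_1,\ldots,\Pi_m)}$ — a bona fide quasiscalar function because every positive real has a positive $\mathsf{k}$-th root — yields part (1).

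For part (2), I would raise the scalar identity from Theorem \ref{s5.3}(2) to the $1/\mathsf{k}$ power over $\mathbb{R}_{>0}$ and set $\varphi(\pi_1,\ldots,\pi_m):=\sqrt[\mathsf{k}]{\phi(\pi_1,\ldots,\pi_m)}$; this produces the claimed form with the exponents $\mathsf{k}_j/\mathsf{k}$. Covariance of $\varphi$ as a representation of $\Psi$ can be checked either by direct substitution — the identity $\mu_B(p)^{\mathsf{k}}=\psi^{\mathsf{k}}(\ldots)$ is basis-independent and positive, so its positive $\mathsf{k}$-th root equals $\mu_B(p)$ — or more structurally by regarding $\Psi$ as the composition of $\Psi^{\mathsf{k}}$ with the $\mathsf{k}$-th-root map (covariantly represented by the scalar $\mathsf{k}$-th root, via Proposition \ref{s4.2}) and applying Propositions \ref{s4.5}, \ref{s4.6}, and \ref{s4.8}.

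The only step that is not pure bookkeeping is showing that the $\mathsf{k}$-th root factors across the product of a proper quantity and a quasiscalar, i.e. $\sqrt[\mathsf{k}]{\alpha v}=\sqrt[\mathsf{k}]{\alpha}\,\sqrt[\mathsf{k}]{v}$ when $\alpha\in\mathbb{R}_{>0}$ and $v\in\mathfrak{D}^{\mathsf{k}}$. This is the main (minor) obstacle, and it reduces to $(\lambda u)^{\mathsf{k}}=\lambda^{\mathsf{k}}u^{\mathsf{k}}$ together with the uniqueness of $\mathsf{k}$-th roots granted by Lemma \ref{s5.4}; once this is settled, parts (1) and (2) follow immediately from Theorem \ref{s5.3}.
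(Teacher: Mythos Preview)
Your proposal is correct and follows the same route as the paper, which simply states that the corollary follows immediately from Theorem~\ref{s5.3} using Lemma~\ref{s5.4}. You have merely unpacked the details of that one-line proof---taking the $\mathsf{k}$-th root via Lemma~\ref{s5.4} on the quantity side and the ordinary positive $\mathsf{k}$-th root on the scalar side---and the paper's remark immediately following Lemma~\ref{s5.4} (invoking Proposition~\ref{s4.7}) already covers the covariance of the resulting scalar representation that you verify in part~(2).
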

\begin{proof}
This follows immediately from Theorem \ref{s5.3}, using Lemma \ref{s5.4}.$\qedhere$
\end{proof}
One can also replace scalar ratios of the form $s_{i}^{\mathsf{c}_{i}}\,/\left(\prod_{j=1}^{r}t_{j}^{\mathsf{c}_{ij}}\right)$
in the representation theorems with ratios of the form $s_{i}\,/\left(\prod_{j=1}^{r}t_{j}^{\mathsf{c}_{ij}/\mathsf{c_{i}}}\right)$,
so that, for example, 
\[
s=\prod_{j=1}^{r}t_{j}^{\mathsf{k}_{j}/\mathsf{k}}\,\mathcal{F}\!\left(s_{1}\,/\left(\prod_{j=1}^{r}t_{j}^{\mathsf{c}_{1j}/\mathsf{c}_{1}}\right),\ldots,s_{m}\,/\left(\prod_{j=1}^{r}t_{j}^{\mathsf{c}_{mj}/\mathsf{c}_{m}}\right)\right).
\]
Using Lemma \ref{s5.4}, one can similarly replace corresponding quantity
ratios with roots of quantity ratios.

\subsection{Variations on the representation theorems}

The representation theorems as stated above apply only to quantity
spaces over a scalar system $\mathcal{R}$ where all scalars are positive.
In physics and engineering, measures of quantities are usually non-negative,
and it is often natural to consider only positive measures of important
types of quantities such as distance, time elapsed, mass, and (absolute)
temperature. Sometimes, the assumption that all scalars in scalar
representations are positive is not appropriate, however, so the representation
theorems need to be modified.

An analysis of the proofs of the representation theorems reveals that
the assumption about positive scalars is used in two ways. First,
the assumption guarantees that $q_{1},\ldots,q_{n}$ are invertible.
Hence, the quantities $p_{i}^{\mathsf{c}_{i}}\,/\left(\prod_{j=1}^{r}q_{j}^{\mathsf{c}_{ij}}\right)$
and the scalars $s_{i}^{\mathsf{c}_{i}}\,/\left(\prod_{j=1}^{r}t_{j}^{\mathsf{c}_{ij}}\right)$
are defined. Instead, we could assume that $\Phi$ is a partial quantity
function, defined only for invertible quantities $\prod_{j=1}^{r}q_{j}^{\mathsf{c}_{ij}}$.
Second, positive scalars guarantee that unique roots of quantities
are defined as described above (though actually only non-negative
scalars are required for unique roots to exist). However, if the integers
$\mathsf{k},\mathsf{c}_{i}$ in the representation theorems are all
equal to $1$ then unique 'roots' exist for all quantities, so one
could use this assumption instead of the assumption about positive
scalars. To clarify this, let us consider an example.

Let $\mathfrak{D}$ be a dimension in $Q/{\sim}$, where $Q$ is a
quantity space over a scalar system of \emph{non-negative} real numbers.
Consider the quantity function 
\[
\Psi:\mathfrak{D}\times\mathfrak{D\rightarrow\mathfrak{D}},\qquad\left(q_{a},q_{b}\right)\mapsto q_{a}+q_{b}.
\]
Since $\mu_{B}\!\left(q\right)+\mu_{B}\!\left(q'\right)=\mu_{B}\!\left(q+q'\right)$
for any $B$, $\Psi$ has a covariant scalar representation 
\[
\psi:\mathbb{R}_{\geq0}\times\mathbb{R}_{\geq0}\rightarrow\mathbb{R}_{\geq0},\qquad\left(a,b\right)\mapsto a+b.
\]
$\psi$ does not have the form stipulated by the representation theorems.
Consider, however, the closely related function 
\[
\psi':\mathbb{R}_{>0}\times\mathbb{R}_{>0}\rightarrow\mathbb{R}_{>0},\qquad\left(a,b\right)\mapsto a\,\phi\!\left(b/a\right),
\]
where $\phi\!\left(x\right)=1+x$. If $a>0$ then $\psi'\!\left(a,b\right)=a\left(1+b/a\right)=a+b$.
Thus, $\psi$ is equal to $\psi^{*}$ defined by 
\[
\begin{cases}
\psi^{*}\!\left(a,b\right)=\psi'\!\left(a,b\right) & \left(a>0,b>0\right)\\
\psi^{*}\!\left(a,b\right)=a & \left(a>0,b=0\right)\\
\psi^{*}\!\left(a,b\right)=b & \left(a=0,b>0\right)\\
\psi^{*}\!\left(a,b\right)=0 & \left(a=0,b=0\right)
\end{cases}.
\]
It is always possible to 'patch' the scalar representation where it
is not defined, but it is not clear that this can always be done in
a natural way. In this case $\psi'$ can be extended in a natural
way, since $\underset{a\rightarrow0}{\lim}\, a\left(1+b/a\right)=b$.

It should also be noted that we can represent the relation $c=a+b$
in the form $c=a\left(1+b/a\right)$ in cases where $a$ and $b$
may be \emph{negative} real numbers, too. This illustrates the fact
that we can in some cases dispense with an explicit or implicit assumption
underlying the usual $\Pi$ theorem -- that scalars are positive (or
possibly non-negative) numbers.

\section{A matrix interpretation of the representation theorems}

Let $B=\left\{ B_{1},\ldots,B_{n}\right\} $ be a basis for $Q/{\sim}$,
and let $\left[q_{0}\right],\ldots,\left[q_{m}\right]$ be dimensions
in $Q/{\sim}$. We can represent the expansions of $\left[q_{0}\right],\ldots,\left[q_{m}\right]$
in terms of $B$ by a \emph{dimensional matrix 
\[
\begin{array}{c}
\begin{array}{c}
\vphantom{D_{1}}\\
B_{1}\\
\vdots\\
B_{j}\\
\vdots\\
B_{n}
\end{array}\left\Vert \begin{array}{ccccc}
\left[q_{0}\right] & \cdots & \left[q_{i}\right] & \cdots & \left[q_{m}\right]\\
a_{01} & \cdots & a_{i1} & \cdots & a_{m1}\\
\vdots &  & \vdots &  & \vdots\\
a_{0j} & \cdots & a_{ij} & \cdots & a_{mj}\\
\vdots &  & \vdots &  & \vdots\\
a_{0n} & \cdots & a_{in} & \cdots & a_{mn}
\end{array}\right\Vert \end{array}
\]
}such that 
\[
\left[q_{i}\right]=\prod_{j=1}^{n}B_{j}^{a_{ij}}\qquad\left(i=0,\ldots,m\right).
\]

Let $q_{i}^{\nu}$ denote the column vector $\left[a_{i1}\cdots a_{in}\right]^{\mathrm{T}}$
corresponding to $\left[q_{i}\right]$, let $\left[q\right]$ be one
of the dimensions $\left[q_{0}\right],\ldots,\left[q_{m}\right]$,
and let $q^{\nu}$ be the corresponding column vector. \linebreak{}
$\left[q\right]$ is dependent on $\left[q_{i_{1}}\right],\ldots,\left[q_{i_{r}}\right]$,
meaning that there are integers $k>0,k_{i_{j}}$ such that 
\[
\left[q\right]^{k}=\prod_{j=1}^{r}\left[q_{i_{j}}\right]^{k_{i_{j}}},
\]
if and only if 
\[
kq^{\nu}=\sum_{j=1}^{r}k_{i_{j}}q_{i_{j}}^{\nu}.
\]

It is also clear that any dimensions $\left[q_{i_{1}}\right],\ldots,\left[q_{i_{r}}\right]$
are independent if and only if $q_{i_{1}}^{\nu},\ldots,q_{i_{r}}^{\nu}$
are independent as column vectors, meaning that if 
\[
\sum_{j=1}^{r}k_{i_{j}}q_{i_{j}}^{\nu}=\left[\begin{array}{c}
0\\
\cdots\\
0
\end{array}\right]
\]
then $k_{i_{j}}=0$ for $j=1,\ldots,r$.

The facts just mentioned have important immediate consequences:
\begin{thm}
\label{s6.1}Let $D$ be a dimensional matrix with column head dimensions\linebreak{}
 $\left[q_{0}\right],\ldots,\left[q_{m}\right]$. Then the column
vector $q^{\nu}$ in $D$ is dependent on $q_{i_{1}}^{\nu},\ldots,q_{i_{r}}^{\nu}$
if and only if $\left[q\right]$ is dependent on $\left[q_{i_{1}}\right],\ldots,\left[q_{i_{r}}\right]$,
and $\left\{ q_{i_{1}}^{\nu},\ldots,q_{i_{r}}^{\nu}\right\} $ is
a (maximal) set of independent column vectors in $D$ if and only
if $\left\{ \left[q_{i_{1}}\right],\ldots,\left[q_{i_{r}}\right]\right\} $
is a (maximal) set of independent dimensions in $\left\{ \left[q_{0}\right],\ldots,\left[q_{m}\right]\right\} $. \end{thm}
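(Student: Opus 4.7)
The plan is to recognize that the theorem is essentially a translation result: the column-vector encoding $[q_i]\mapsto q_i^{\nu}$ is a group homomorphism from the subgroup of $Q/{\sim}$ generated by the $[q_i]$'s into the additive group $\mathbb{Z}^n$, so the multiplicative notions of dependence and independence from Definition~\ref{d3.5} translate term-by-term into the familiar additive notions for integer column vectors. Almost everything needed has already been spelled out in the paragraphs immediately preceding the statement; what remains is to package those observations cleanly.

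First I would fix the basis $B=\{B_1,\ldots,B_n\}$ for $Q/{\sim}$ and invoke uniqueness of expansion in a free abelian group (Corollary~\ref{s3.8}) to note that the map $[q_i]\mapsto q_i^{\nu}=(a_{i1},\ldots,a_{in})^{\mathrm{T}}$, determined by $[q_i]=\prod_{j=1}^{n}B_j^{a_{ij}}$, sends products of dimensions to sums of column vectors and powers to scalar multiples, and sends $[1_Q]$ to the zero vector. This is an injective homomorphism from the subgroup $\langle[q_0],\ldots,[q_m]\rangle\subseteq Q/{\sim}$ into $\mathbb{Z}^n$. Then, reading off exponents of $B_j$ on both sides, the relation
\[
[q]^{k}=\prod_{j=1}^{r}[q_{i_j}]^{k_{i_j}}
\]
is equivalent to the system $ka_{\cdot\ell}=\sum_{j=1}^{r}k_{i_j}a_{i_j\ell}$ for $\ell=1,\ldots,n$, that is, to $kq^{\nu}=\sum_{j=1}^{r}k_{i_j}q_{i_j}^{\nu}$. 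Since such equivalences preserve the sign condition $k>0$, this proves the first part: dependence of $[q]$ on $[q_{i_1}],\ldots,[q_{i_r}]$ is equivalent to dependence of $q^{\nu}$ on $q_{i_1}^{\nu},\ldots,q_{i_r}^{\nu}$.

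For the independence half I would apply the same translation to the defining relation
\[
\prod_{j=1}^{r}[q_{i_j}]^{k_{i_j}}=[1_Q],
\]
which becomes $\sum_{j=1}^{r}k_{i_j}q_{i_j}^{\nu}=0$; the conditions ``all $k_{i_j}=0$'' match literally on both sides, giving the independence equivalence. The maximality half then follows formally: a subset of $\{[q_0],\ldots,[q_m]\}$ is independent and every omitted dimension is dependent on it precisely when the corresponding subset of columns is independent and every omitted column is dependent on it, by two invocations of the parts already proved.

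I do not expect a serious obstacle here; the only point needing slight care is matching the two notions of ``dependent,'' since Definition~\ref{d3.5} requires a non-zero exponent $k$ on the left-hand side, and the corresponding requirement on column vectors is a non-zero integer scalar $k$ on $q^{\nu}$. Once it is observed that the isomorphism above preserves the sign of $k$, no further subtlety arises, and the proof is essentially a one-line appeal to the translation dictionary already displayed in the text preceding the statement.
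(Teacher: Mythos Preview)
Your proposal is correct and matches the paper's approach exactly: the paper does not give a separate formal proof but simply states the theorem as an ``immediate consequence'' of the translation dictionary (the equivalences $[q]^k=\prod_j[q_{i_j}]^{k_{i_j}}\Leftrightarrow kq^{\nu}=\sum_j k_{i_j}q_{i_j}^{\nu}$ and $\prod_j[q_{i_j}]^{k_{i_j}}=[1_Q]\Leftrightarrow\sum_j k_{i_j}q_{i_j}^{\nu}=0$) spelled out in the paragraphs immediately preceding it. Your write-up is essentially a clean packaging of those same observations, with the maximality clause handled formally as you indicate.
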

\begin{cor}
\label{s6.2}The rank of a dimensional matrix is the same as the rank
of the dimension group generated by the column head dimensions $\left[q_{0}\right],\ldots,\left[q_{m}\right]$. 
\end{cor}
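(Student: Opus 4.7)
The plan is to chain together Theorem \ref{s6.1}, the remark following Definition \ref{d3.5}, and the standard fact that all maximal independent sets in a free abelian group have the same cardinality (its rank). Set $\mathfrak{T}=\{[q_0],\ldots,[q_m]\}$ and let $G$ be the dimension group generated by $\mathfrak{T}$; by Corollary \ref{s3.9}, $G$ is free abelian.

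First I would fix a maximal set of independent column vectors $\{q_{i_1}^{\nu},\ldots,q_{i_r}^{\nu}\}$ in the dimensional matrix $D$; by definition the rank of $D$ is $r$. Applying Theorem \ref{s6.1}, this corresponds to a maximal set of independent dimensions $\{[q_{i_1}],\ldots,[q_{i_r}]\}$ in $\mathfrak{T}$, so the maximum size of an independent subset of $\mathfrak{T}$ is also $r$.

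Next I would invoke the remark following Definition \ref{d3.5} to pass from $\mathfrak{T}$ to the whole group $G$: a maximal independent subset of $\mathfrak{T}$ is automatically a maximal independent subset of $G$, since every generator (hence every product of powers of generators, hence every element of $G$) is dependent on it. Finally, since any two maximal independent sets in a free abelian group share the same cardinality, namely its rank, one concludes that $r$ equals the rank of $G$, as required.

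The only mildly delicate point, and the one I would want to verify carefully, is that the $\mathbb{Z}$-independence of column vectors used in Theorem \ref{s6.1} really does give the usual rank of the integer matrix $D$: a set of integer column vectors admitting no nontrivial $\mathbb{Z}$-linear relation is linearly independent over $\mathbb{Q}$, and conversely any $\mathbb{Q}$-linear relation can be cleared to a $\mathbb{Z}$-linear relation, so the maximum size of a $\mathbb{Z}$-independent set of columns agrees with the ordinary matrix rank. With that observation in hand, the three-step chain above closes the argument.
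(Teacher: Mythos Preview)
Your proposal is correct and follows precisely the approach the paper intends: the paper states Corollary \ref{s6.2} without proof, treating it as an immediate consequence of Theorem \ref{s6.1} together with the earlier remark (after Definition \ref{d3.5}) that all maximal independent sets in a free abelian group have the same cardinality. Your write-up simply spells out the chain that the paper leaves implicit, including the harmless check that $\mathbb{Z}$-independence and $\mathbb{Q}$-independence of integer column vectors coincide.
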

To be able to apply the representation theorems, we need to prepare
the data in the dimensional matrix. First, we select a dependent dimension,
denoted $\mathfrak{D}$ in the representation theorems. Then, the
other dimensions should be divided into a non-empty set $I$ of independent
dimensions and a possibly empty set $D$ of dimensions dependent on
those in $I$. These are the dimensions denoted $\mathfrak{B}_{1},\ldots,\mathfrak{B}_{r}$
and $\mathfrak{D}_{1},\ldots,\mathfrak{D}_{m}$, respectively, in
the representation theorems. It is also required that $\mathfrak{D}$
is dependent on the dimensions in $I$. It is clear that $I$ is a
\emph{maximal set of independent dimensions not containing $\mathfrak{D}$}
in the given set of column head dimensions. While neither the existence
nor the uniqueness of such a set $I$ is guaranteed, there is for
each dimensional matrix a set $\mathscr{M}$ of all maximal sets of
independent dimensions not containing\emph{ $\mathfrak{D}$} in the
given set of column dimensions, and a corresponding set of sets of
column vectors. For each set in $\mathscr{M}$ there is a unique \emph{dimensional
model} to which the representation theorems can be applied.

For example, consider the following three dimensional matrices: 
\[
\begin{array}{c}
\\
B_{1}\\
B_{2}
\end{array}\left\Vert \begin{array}{cc}
\left[q\right] & \left[q_{1}\right]\\
1 & 1\\
0 & 1
\end{array}\right\Vert ,\quad\;\begin{array}{c}
\\
B_{1}\\
B_{2}
\end{array}\left\Vert \begin{array}{ccc}
\left[q\right] & \left[q_{1}\right] & \left[q_{2}\right]\\
1 & 1 & 2\\
0 & 1 & 0
\end{array}\right\Vert ,\quad\;\begin{array}{c}
\\
B_{1}\\
B_{2}
\end{array}\left\Vert \begin{array}{cccc}
\left[q\right] & \left[q_{1}\right] & \left[q_{2}\right] & \left[q_{3}\right]\\
1 & 1 & 2 & 0\\
0 & 1 & 0 & 1
\end{array}\right\Vert .
\]

The rank of each matrix is 2, so every maximal set of independent
column vectors contains 2 column vectors. Equivalently, every maximal
set of independent column head dimensions contains 2 dimensions. We
let $\left[q\right]$ be the dependent dimension, so that $q$ is
the dependent variable.

In the first case, there is exactly one maximal set of independent
dimensions, namely $\left\{ \left[q\right],\left[q_{1}\right]\right\} $,
but this set contains $\left[q\right]$, so $\mathscr{M}$ is empty
and the representation theorems do not apply.

In the second case, however, there is exactly one maximal set of independent
dimensions which does not contain $\left[q\right]$, namely $\left\{ \left[q_{1}\right],\left[q_{2}\right]\right\} $,
so the dimensional matrix translates into a unique dimensional model:
\[
\begin{array}{c}
\left\Vert \begin{array}{c}
\left(\mathfrak{D}\right)\\
\left[q\right]\\
1\\
0
\end{array}\right|\left|\begin{array}{cc}
\left(\mathfrak{B}_{1}\right) & \left(\mathfrak{B}_{2}\right)\\
\left[q_{1}\right] & \left[q_{2}\right]\\
1 & 2\\
1 & 0
\end{array}\right\Vert \end{array}.
\]
According to the representation theorems, dimensional analysis involves
solving the equation 
\[
\left[q\right]^{k}=\left[q_{1}\right]^{k_{1}}\left[q_{2}\right]^{k_{2}},
\]
or equivalently the linear \emph{dimensional equation} 
\[
kq^{\nu}=k_{1}q_{1}^{\nu}+k_{2}q_{2}^{\nu},
\]
that is, 
\[
k\left[\begin{array}{c}
1\\
0
\end{array}\right]=k_{1}\left[\begin{array}{c}
1\\
1
\end{array}\right]+k_{2}\left[\begin{array}{c}
2\\
0
\end{array}\right],
\]
where $k,k_{1},k_{2}$ are integers and $k>0$.

Note that the matrix of the homogeneous equation system corresponding
to the dimensional equation has $r+1$ columns, $r$ of which are
independent. Thus, the solution space is one-dimensional; all solutions
of a dimensional equation have the form $\lambda\left(k,k_{1},\ldots,k_{r}\right)$,
where $\left(k,k_{1},\ldots,k_{r}\right)$ is a basis for the solution
space. Furthermore, $k\neq0$ (and we may assume that $k>0$) since
$q^{\nu}$ is dependent on $q_{1}^{\nu},\ldots,q_{r}^{\nu}$, and
there is a basis vector for the solution space where all entries are
integers, since all coefficients in the equation system considered
are integers and a basis vector for the solution space can be derived
from the matrix representing a dimensional equation by means of integer-preserving
operations \cite{key-1}.

Thus, it is clear that there is a unique integer solution $\left(k,k_{1},\ldots,k_{r}\right)$,
where $k>0$ and $k\leq k'$ for any integer solution $\left(k',k_{1}',\ldots,k_{r}'\right)$;
it suffices to consider only such \emph{canonical solutions}, since
all solutions with integer coefficients have the form $n\left(k,k_{1},\ldots,k_{r}\right)$,
where $n$ is an integer and $\left(k,k_{1},\ldots,k_{r}\right)$
is the canonical solution.

The canonical solution in the case considered is 
\[
k=2,\, k_{1}=0,\, k_{2}=1,
\]
with the corresponding quantity relation 
\[
q^{2}=q_{2}\,\Phi\!\left(\right)=q_{2}\left(K1_{Q}\right)=Kq_{2}\qquad\mathrm{or}\qquad q=C\sqrt{q_{2}},
\]
and the corresponding scalar relation $t^{2}=Kt_{2}$ or $t=C\sqrt{t_{2}}$.

In the third case, $\mathscr{M}=\left\{ \left\{ \left[q_{2}\right],\left[q_{3}\right]\right\} ,\left\{ \left[q_{1}\right],\left[q_{3}\right]\right\} ,\left\{ \left[q_{1}\right],\left[q_{2}\right]\right\} \right\} $,
and we have three different dimensional models, as shown below: 
\[
\begin{array}{c}
\left\Vert \begin{array}{c}
\left(\mathfrak{D}\right)\\
\left[q\right]\\
1\\
0
\end{array}\right|\begin{array}{c}
\left(\mathfrak{D}_{1}\right)\\
\left[q_{1}\right]\\
1\\
1
\end{array}\left|\begin{array}{cc}
\left(\mathfrak{B}_{1}\right) & \left(\mathfrak{B}_{2}\right)\\
\left[q_{2}\right] & \left[q_{3}\right]\\
2 & 0\\
0 & 1
\end{array}\right\Vert ,\end{array}\qquad\begin{array}{c}
\left\Vert \begin{array}{c}
\left(\mathfrak{D}\right)\\
\left[q\right]\\
1\\
0
\end{array}\right|\begin{array}{c}
\left(\mathfrak{D}_{1}\right)\\
\left[q_{2}\right]\\
2\\
0
\end{array}\left|\begin{array}{cc}
\left(\mathfrak{B}_{1}\right) & \left(\mathfrak{B}_{2}\right)\\
\left[q_{1}\right] & \left[q_{3}\right]\\
1 & 0\\
1 & 1
\end{array}\right\Vert ,\end{array}
\]
\begin{gather*}
\left\Vert \begin{array}{c}
\left(\mathfrak{D}\right)\\
\left[q\right]\\
1\\
0
\end{array}\right|\begin{array}{c}
\left(\mathfrak{D}_{1}\right)\\
\left[q_{3}\right]\\
0\\
1
\end{array}\left|\begin{array}{cc}
\left(\mathfrak{B}_{1}\right) & \left(\mathfrak{B}_{2}\right)\\
\left[q_{1}\right] & \left[q_{2}\right]\\
1 & 2\\
1 & 0
\end{array}\right\Vert .
\end{gather*}
For the first model, we have the equations 
\[
k\left[\begin{array}{c}
1\\
0
\end{array}\right]=k_{2}\left[\begin{array}{c}
2\\
0
\end{array}\right]+k_{3}\left[\begin{array}{c}
0\\
1
\end{array}\right],\qquad k_{1}\left[\begin{array}{c}
1\\
1
\end{array}\right]=k_{2}'\left[\begin{array}{c}
2\\
0
\end{array}\right]+k_{3}'\left[\begin{array}{c}
0\\
1
\end{array}\right]
\]
with the canonical solutions 
\[
\begin{cases}
k=2,k_{2}=1,k_{3}=0\\
k_{1}=2,k_{2}'=1,k_{3}'=2
\end{cases}
\]
and the corresponding quantity relation 
\[
q^{2}=q_{2}\,\Phi{}_{1}\!\!\left(\frac{q_{1}^{2}}{q_{2}\, q_{3}^{2}}\right).
\]
Quantity relations corresponding to the two other dimensional models
are 
\[
q=q_{1}q_{3}^{-1}\,\Phi{}_{2}\!\left(\frac{q_{2}}{q_{1}^{2}q_{3}^{-2}}\right),\qquad q^{2}=q_{2}\,\Phi{}_{3}\!\left(\frac{q_{3}^{2}}{q_{1}^{2}q_{2}^{-1}}\right).
\]

The fact that there is in general more than one dimensional model
corresponding to a given dimensional matrix, even if the dependent
dimension is fixed, may at first seem to be an unwanted complication,
casting doubt on the capacity of dimensional analysis to produce unambiguous
results. In applications of dimensional analysis, this complication
is usually hidden by quietly choosing one quantity relation -- or
mostly the corresponding scalar relation -- in cases where several
relations can be derived from the dimensional matrix. On reflection
it is clear, however, that all relations derived are valid, and that
these relations combined may yield more information than any single
one. This means that by considering all dimensional models we can
often gain more information about the relation between the quantities
involved than by considering just one dimensional model. Examples
illustrating this principle will be given in the next section.

\section{Dimensional analysis exemplified}
\begin{example}
We want to express the area $a$ of a circle as a function of its
diameter $d$. Assume that $\left\{ L\right\} $ is a basis for the
dimension group considered, and that $\left[a\right]=L{}^{2}$ and
$\left[d\right]=L$. Intuitively, $L$ is the dimension of length.
The dimensional matrix is 
\[
\begin{array}{c}
\begin{array}{c}
\\
L
\end{array}\left\Vert \begin{array}{cc}
\left[a\right] & \left[d\right]\\
2 & 1
\end{array}\right\Vert \end{array},
\]
and $\left\{ \left[d\right]\right\} $ is the only maximal set of
independent dimensions not containing $\left[a\right]$, so the dimensional
model for the relation $a=\Psi\!\left(d\right)$ is simply 
\[
\begin{array}{cc}
\left\Vert \begin{array}{c}
\left[a\right]\\
2
\end{array}\right|\! & \!\left|\begin{array}{c}
\left[d\right]\\
1
\end{array}\right\Vert \end{array}.
\]
This gives a dimensional equation $k_{a}a^{\nu}=k_{d}d^{\nu}$ with
the canonical solution \linebreak{}
 $\left\{ k_{a}=1,\; k_{d}=2\right\} $ and hence we obtain the quantity
relation 
\[
a=d^{2}\Phi\!\left(\right)=Kd^{2},
\]
where $\Phi\!\left(\right)=K1_{Q}\in\left[1_{Q}\right]$, or the formally
almost identical scalar relation 
\[
a=d^{2}\phi\!\left(\right)=Kd^{2},
\]
where $\phi\!\left(\right)=K\in\mathbb{R}{}_{>0}$. The constant $K$
cannot be determined by means of dimensional analysis, but we know
from elementary geometry that $K=\pi/4$. 
\end{example}
\begin{example}
The following example of dimensional analysis is often given. Assume
that the time of oscillation $t$ of a pendulum depends on its length
$l$, the mass of the bob $m$, the amplitude of the oscillation (in
radians) $\theta$ and the constant of gravity $g$; that is, $t=\Psi\!\left(l,m,\theta,g\right)$.
We obtain the following dimensional matrix 
\[
\begin{array}{r}
\begin{array}{c}
\\
L\\
T\\
M
\end{array}\left\Vert \begin{array}{ccccc}
\left[t\right] & \left[l\right] & \left[m\right] & \left[\theta\right] & \left[g\right]\\
0 & 1 & 0 & 0 & 1\\
1 & 0 & 0 & 0 & -2\\
0 & 0 & 1 & 0 & 0
\end{array}\right\Vert \end{array},
\]
and there is only one maximal set of independent dimensions not containing
the dimension $\left[t\right]$ corresponding to the dependent variable
$t$, namely $\left\{ \left[l\right],\left[m\right],\left[g\right]\right\} $.
The corresponding dimensional model is 
\[
\left\Vert \begin{array}{c}
\left[t\right]\\
0\\
1\\
0
\end{array}\right|\begin{array}{c}
\left[\theta\right]\\
0\\
0\\
0
\end{array}\left|\begin{array}{ccc}
\left[l\right] & \left[m\right] & \left[g\right]\\
1 & 0 & 1\\
0 & 0 & -2\\
0 & 1 & 0
\end{array}\right\Vert .
\]
This model gives two dimensional equations 
\[
\begin{cases}
k_{t}t^{\nu}=k_{l}l^{\nu}+k_{m}m^{\nu}+k_{g}g^{\nu}\\
k_{\theta}\theta^{\nu}=k_{l}'l^{\nu}+k_{m}'m^{\nu}+k_{g}'g^{\nu}
\end{cases}
\]
with canonical solutions 
\[
\begin{cases}
k_{t}=2,\; k_{l}=1,\; k_{m}=0,\; k_{g}=-1\\
k_{\theta}=1,\; k_{l}'=0,\; k_{m}'=0,\; k_{g}'=0
\end{cases},
\]
and so we obtain a quantity relation 
\[
t^{2}=\left(l/g\right)\,\Phi\!\left(\theta/1_{Q}\right)=\left(l/g\right)\,\Phi\!\left(\theta\right),
\]
where $\Phi:\left[1_{Q}\right]\rightarrow\left[1_{Q}\right]$, or
the equivalent scalar relation 
\[
t^{2}=\left(l/g\right)\,\phi\!\left(\theta\right),
\]
where $\phi:\mathbb{R}_{>0}\rightarrow\mathbb{R}_{>0}$. Thus, the
time of oscillation does not really depend on the mass of the bob.%
\footnote{The assumption that $t=\Psi\!\left(l,m,\theta,g\right)$ means, to
be precise, that $t$ does \emph{not} depend on \emph{other} variables
than $l$, $m$, $\theta$ and $g$, so this fact does not contradict
the original assumption. Assumptions about governing variables in
dimensional analysis are always of this nature.%
} It is known that for $\theta\approx0$ we have $ $$\phi\!\left(\theta\right)\approx4\pi^{2}$,
so for small oscillations both the quantity relation and the scalar
relation can be written as 
\[
t^{2}=4\pi^{2}\left(l/g\right)\quad\mathrm{or}\quad t=2\pi\sqrt{l/g}\,;
\]
note that the quantity square root is well-defined because $\left[l/g\right]=\left[t\right]^{2}$. 
\end{example}
For simplicity, we shall be using symbols such as $\Psi,\Psi_{1},\Psi_{2}$
and $\Phi,\Phi{}_{1},\Phi{}_{2}$ to denote both quantity functions
and scalar functions in the examples below, and each relation derived
can be interpreted both as a quantity relation and a scalar relation,
unless otherwise indicated.
\begin{example}
We want to express the area $a$ of a rectangle as a function of the
length of its longest side $l$ and the length of its shortest side
$s$; that is, $a=\Psi\!\left(l,s\right)$. The dimensional matrix
is 
\[
\begin{array}{c}
\\
L
\end{array}\left\Vert \begin{array}{ccc}
\left[a\right] & \left[l\right] & \left[s\right]\\
2 & 1 & 1
\end{array}\right\Vert ,
\]
and there are two corresponding dimensional models with $a$ as the
dependent variable, 
\[
\begin{array}{c}
\left\Vert \begin{array}{c}
\left[a\right]\\
2
\end{array}\right|\begin{array}{c}
\left[l\right]\\
1
\end{array}\left|\begin{array}{c}
\left[s\right]\\
1
\end{array}\right\Vert \end{array},\qquad\begin{array}{c}
\begin{array}{c}
\left\Vert \begin{array}{c}
\left[a\right]\\
2
\end{array}\right|\begin{array}{c}
\left[s\right]\\
1
\end{array}\left|\begin{array}{c}
\left[l\right]\\
1
\end{array}\right\Vert .\end{array}\end{array}
\]
The two corresponding sets of dimensional equations are 
\[
\begin{cases}
k_{a}a^{\nu}=k_{s}s^{\nu}\\
k_{l}l^{\nu}=k_{s}'s^{\nu}
\end{cases}\quad\begin{cases}
k_{a}a^{\nu}=k_{l}l^{\nu}\\
k_{s}s^{\nu}=k_{l}'l^{\nu}
\end{cases}
\]
with the canonical solutions 
\[
\begin{cases}
k_{a}=1,\; k_{s}=2\\
k_{l}=1,\; k_{s}'=1
\end{cases}\quad\begin{cases}
k_{a}=1,\; k_{l}=2\\
k_{s}=1,\; k_{l}'=1
\end{cases}\!\!\!.
\]
These solutions give the relations 
\[
a=s^{2}\Phi_{1}\!\left(l/s\right),\quad a=l^{2}\Phi{}_{2}\!\left(s/l\right).
\]
If we now assume, for symmetry reasons, that $\Phi{}_{1}=\Phi_{2}=\Phi$,
we see that $s^{2}\Phi\!\left(l/s\right)=l^{2}\Phi\!\left(s/l\right)$,
and setting $x=l/s$ this becomes 
\[
\Phi\!\left(x\right)=x^{2}\Phi\!\left(1/x\right).
\]
This functional equation has solutions of the form $\Phi\!\left(x\right)=Kx.$
Hence, from\linebreak{}
 $a=s^{2}\Phi\!\left(l/s\right)=l^{2}\Phi\!\left(s/l\right)$ we finally
obtain the relations 
\[
a=Ksl=Kls
\]
for some $K$, and we know that $K=1$ for a rectangle. 
\end{example}
\begin{example}
Now, we express the area $a$ of an ellipse as a function $a=\Psi\!\left(t,c\right)$
of its transverse diameter $t$ and its conjugate diameter $c$. The
dimensional matrix for this problem has exactly the same form as in
the previous example; it is 
\[
\begin{array}{c}
\\
L
\end{array}\left\Vert \begin{array}{ccc}
\left[a\right] & \left[t\right] & \left[c\right]\\
2 & 1 & 1
\end{array}\right\Vert .
\]
By the same reasoning, in particular the same symmetry assumption
as in the previous example, we obtain the relation 
\[
a=Ktc=Kct,
\]
but this time $K=\pi/4$ -- indeed, $K$ is the same for any ellipse,
and the value of $K$ for a circle is $\pi/4$.
\end{example}
\begin{example}
Let $A$ and $B$ be two bodies with mass $a$ and $b$, respectively,
and let $c$ be the combined mass of $A$ and $B$. We are looking
for a quantity function $\Psi$ such that $c=\Psi\left(a,b\right)$.
The simple dimensional matrix is 
\[
\begin{array}{c}
\\
M
\end{array}\left\Vert \begin{array}{ccc}
\left[c\right] & \left[a\right] & \left[b\right]\\
1 & 1 & 1
\end{array}\right\Vert ,
\]
and there are two corresponding dimensional models with c as the dependent
variable, 
\[
\begin{array}{c}
\left\Vert \begin{array}{c}
\left[c\right]\\
1
\end{array}\right|\begin{array}{c}
\left[a\right]\\
1
\end{array}\left|\begin{array}{c}
\left[b\right]\\
1
\end{array}\right\Vert ,\end{array}\qquad\begin{array}{c}
\left\Vert \begin{array}{c}
\left[c\right]\\
1
\end{array}\right|\begin{array}{c}
\left[b\right]\\
1
\end{array}\left|\begin{array}{c}
\left[a\right]\\
1
\end{array}\right\Vert \end{array}.
\]

These models give the relations 
\[
c=b\,\Phi_{1}\!\left(a/b\right),\quad c=a\,\Phi_{2}\!\left(b/a\right),
\]
and assuming for symmetry reasons that $\Phi{}_{1}=\Phi{}_{2}=\Phi$
and setting $x=a/b$, we obtain the functional equation 
\[
\Phi\!\left(x\right)=x\,\Phi\!\left(1/x\right),
\]
which has 
\[
\Phi\!\left(x\right)=x+1
\]
as a solution. From either one of the relations $c=b\,\Phi{}_{1}\!\left(a/b\right)$
or $c=a\,\Phi{}_{2}\!\left(b/a\right)$ we obtain 
\[
c=a+b
\]
as expected. This example illustrates the fact that although any quantity
function obtained by dimensional analysis can be defined using only
multiplication and division of quantities, addition and subtraction
can sometimes also be used to express the quantity function and hence
also its scalar representation. 
\end{example}
\begin{example}
(Adapted from Bridgman \cite{key-3}, pp. 5--8.) Let two bodies with
mass $m_{1}$ and $m_{2}$ revolve around each other in circular orbits
under influence of their mutual gravitational attraction. Let $d$
denote their distance and $t$ the time of revolution. We want to
find how $t$ depends on other quantities.

We first assume that $t=\Psi\!\left(d,m_{1},m_{2}\right)$ and formulate
a model where $L$ (length), $T$ (time) and $M$ (mass) are the elements
of a basis for the dimension group generated by $\left\{ \left[t\right],\left[d\right],\left[m_{1}\right],\left[m_{2}\right]\right\} $.
We obtain the dimensional matrix below: 
\[
\begin{array}{c}
\\
L\\
T\\
M
\end{array}\left\Vert \begin{array}{cccc}
\left[t\right] & \left[d\right] & \left[m_{1}\right] & \left[m_{2}\right]\\
0 & 1 & 0 & 0\\
1 & 0 & 0 & 0\\
0 & 0 & 1 & 1
\end{array}\right\Vert .
\]
Inspecting this matrix, we note that each subset of $\left\{ \left[t\right],\left[d\right],\left[m_{1}\right],\left[m_{2}\right]\right\} $
which is a maximal set of independent dimensions contains $\left[t\right]$.
(Specifically, $\left\{ \left[t\right],\left[d\right],\left[m_{1}\right]\right\} $
and $\left\{ \left[t\right],\left[d\right],\left[m_{2}\right]\right\} $
contain $\left[t\right]$, while $\left\{ \left[d\right],\left[m_{1}\right],\left[m_{2}\right]\right\} $
are not independent, and sets of less than 3 dimensions can be disregarded,
since the rank of the dimensional matrix is 3.) Hence, $\Psi$ cannot
have a covariant scalar representation, and we cannot use the representation
theorems to find this quantity function.

Thus, we have to add another governing variable, and Bridgman suggests
that $t$ does also depend on the gravitation constant $G$, giving
the dimensional matrix 
\[
\begin{array}{c}
\\
L\\
T\\
M
\end{array}\left\Vert \begin{array}{ccccc}
\left[t\right] & \left[d\right] & \left[m_{1}\right] & \left[m_{2}\right] & \left[G\right]\\
0 & 1 & 0 & 0 & \begin{array}{r}
3\end{array}\\
1 & 0 & 0 & 0 & -2\\
0 & 0 & 1 & 1 & -1
\end{array}\right\Vert .
\]
For this dimensional matrix, there are two possible dimensional models,
\[
\begin{array}{c}
\left\Vert \begin{array}{c}
\left[t\right]\\
1\\
0\\
0
\end{array}\right|\begin{array}{c}
\left[m_{1}\right]\\
0\\
0\\
1
\end{array}\left|\begin{array}{ccc}
\left[m_{2}\right] & \left[d\right] & \left[G\right]\\
0 & 0 & -2\\
0 & 1 & 3\\
1 & 0 & -1
\end{array}\right\Vert \end{array},\qquad\begin{array}{c}
\left\Vert \begin{array}{c}
\left[t\right]\\
1\\
0\\
0
\end{array}\right|\begin{array}{c}
\left[m_{2}\right]\\
0\\
0\\
1
\end{array}\left|\begin{array}{ccc}
\left[m_{1}\right] & \left[d\right] & \left[G\right]\\
0 & 0 & -2\\
0 & 1 & 3\\
1 & 0 & -1
\end{array}\right\Vert \end{array}.
\]

\pagebreak{}

The corresponding two sets of dimensional equations are 
\[
\begin{cases}
k_{t}t^{\nu}=k_{m_{2}}m_{2}^{\nu}+k_{d}d^{\nu}+k_{G}G^{\nu}\\
k_{m_{1}}m_{1}^{\nu}=k_{m_{2}}'m_{2}^{\nu}+k_{d}'d^{\nu}+k_{G}'G^{\nu}
\end{cases}\begin{cases}
k_{t}t^{\nu}=k_{m_{1}}m_{1}^{\nu}+k_{d}d^{\nu}+k_{G}G^{\nu}\\
k_{m_{2}}m_{2}^{\nu}=k_{m_{1}}'m_{1}^{\nu}+k_{d}'d^{\nu}+k_{G}'G^{\nu}
\end{cases}\!\!\!.
\]
The canonical solutions are 
\[
\begin{cases}
k_{t}=2,\; k_{m_{2}}=-1,\; k_{d}=3,\; k_{G}=-1\\
k_{m_{1}}=1,\; k_{m_{2}}'=1,\; k_{d}'=0,\; k_{G}'=0
\end{cases}\begin{cases}
k_{t}=2,\; k_{m_{1}}=-1,\; k_{d}=3,\; k_{G}=-1\\
k_{m_{2}}=1,\; k_{m_{1}}'=1,\; k_{d}'=0,\; k_{G}'=0
\end{cases}\!\!\!,
\]
and thus 
\[
t^{2}=\frac{d^{3}}{Gm_{2}}\:\Phi_{1\!}\!\left(\frac{m_{1}}{m_{2}}\right)=\frac{d^{3}}{Gm_{1}}\:\Phi_{2\!}\!\left(\frac{m_{2}}{m_{1}}\right).
\]

For symmetry reasons, we assume that $\Phi{}_{1}=\Phi_{2}=\Phi$,
so setting $x=\frac{m_{1}}{m_{2}}$ we obtain the functional equation
\[
x\,\Phi\!\left(x\right)=\Phi\!\left(\frac{1}{x}\right).
\]
Since $\frac{x}{1+x}=\frac{1}{1+\left(1/x\right)}$, this functional
equation has solutions of the form 
\[
\Phi\!\left(x\right)=\frac{K}{1+x},
\]
and substituting this in $t^{2}=\frac{d^{3}}{Gm_{2}}\:\Phi\!\left(\frac{m_{1}}{m_{2}}\right)$
or $t^{2}=\frac{d^{3}}{Gm_{1}}\:\Phi\!\left(\frac{m_{2}}{m_{1}}\right)$
we obtain 
\begin{equation}
t{}^{2}=\frac{Kd^{3}}{G\left(m_{1}+m_{2}\right)}\quad\mathrm{or}\quad t=C\sqrt{\frac{d^{3}}{G\left(m_{1}+m_{2}\right)}}.\label{eq:kepler}
\end{equation}
As before, (\ref{eq:kepler}) can be interpreted both as a quantity
relation and as a scalar relation. The quantity square root is well-defined
because $\left[\frac{d^{3}}{G\left(m_{1}+m_{2}\right)}\right]=\left[t\right]^{2}$.
The constants cannot be determined by dimensional analysis, but it
can be shown based on empirical data or theoretical considerations
that $K=4\pi^{2}$, $C=2\pi$.

It is worth pointing out that Bridgman considers only one solution
to this problem, namely 
\[
t=\frac{d^{\frac{3}{2}}}{G^{\frac{1}{2}}m_{2}^{\frac{1}{2}}}\:\phi\!\left(\frac{m_{2}}{m_{1}}\right);
\]
the corresponding relation here is $t^{2}=\frac{d^{3}}{Gm_{2}}\:\Phi_{1\!}\!\left(\frac{m_{1}}{m_{2}}\right)$.
Thus, Bridgman is not able to derive the functional equation $x\,\Phi\!\left(x\right)=\Phi\!\left(1/x\right)$
by setting $\Phi_{1}=\Phi_{2}$, and he does not obtain the much more
informative formula (\ref{eq:kepler}), derived here using nothing
but dimensional analysis and a natural symmetry assumption. 
\end{example}
\begin{example}
(Adapted from Barenblatt \cite{key-2}, pp. 13--14.) A \emph{Koch
snowflake} or \emph{Koch star} (named after the Swedish mathematician
Helge von Koch) is constructed as follows: Start with an equilateral
triangle. Then divide each side into three equal line segments and
replace the middle segment with the two other sides of an outwards-pointing
equilateral triangle with the middle segment as its base. All line
segments thus obtained can be subjected to the same operation, and
this is repeated as many times as desired, or \emph{ad infinitum}.

\includegraphics[bb=0bp 30bp 362bp 362bp,scale=0.5]{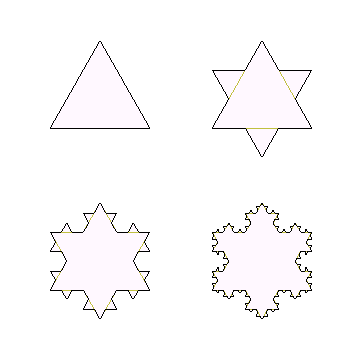}

\emph{Figure 1. Koch snowflake after $n=0,1,2,3$ iterations.}%
\footnote{ ``KochFlake''. Licensed under Creative Attribution--Share Alike
3.0 via Wikimedia Commons.%
}\emph{\bigskip{}
 }

It is clear that the length $\ell$ of the curve obtained after $n$
iterations can be expressed as a function $F$ of $n$ and the length
$d$ of one side of the original triangle. Equivalently, the length
of the curve can be expressed as a function of $d$ and $\eta,$ the
length of the line segments making up the curve after $n$ iterations;
thus $\ell=\Psi\!\left(d,\eta\right)$, where $\eta=f\!\left(d,n\right)$
and $\Psi$ is given by $\Psi\!\left(d,f\!\left(d,n\right)\right)=F\!\left(d,n\right)$.
We shall consider the relation $\ell=\Psi\!\left(d,\eta\right)$ in
order to illustrate a general point about dimensional analysis. The
simple dimensional matrix corresponding to this relation is 
\[
\begin{array}{c}
\\
L
\end{array}\left\Vert \begin{array}{ccc}
\left[\ell\right] & \left[d\right] & \left[\eta\right]\\
1 & 1 & 1
\end{array}\right\Vert ,
\]
and there are two possible dimensional models, 
\[
\begin{array}{c}
\left\Vert \begin{array}{c}
\left[\ell\right]\\
1
\end{array}\right|\begin{array}{c}
\left[d\right]\\
1
\end{array}\left|\begin{array}{c}
\left[\eta\right]\\
1
\end{array}\right\Vert \end{array},\qquad\begin{array}{c}
\begin{array}{c}
\left\Vert \begin{array}{c}
\left[\ell\right]\\
1
\end{array}\right|\begin{array}{c}
\left[\eta\right]\\
1
\end{array}\left|\begin{array}{c}
\left[d\right]\\
1
\end{array}\right\Vert ,\end{array}\end{array}
\]
with corresponding quantity relations $\ell=\eta\,\Phi_{1}\!\left(d/\eta\right)$
and $\ell=d\,\Phi_{2}\!\left(\eta/d\right)$.

Unfortunately, if we do not know $\Phi_{1}$ or $\Phi_{2}$ these
relations tell us nothing about how $\ell$ depends on $d$ (for a
fixed $\eta$) or on $\eta$ (for a fixed $d$). In this case, there
is no reason to assume that $\Phi_{1}=\Phi_{2}$, so we cannot find
a functional equation in the same way as in previous examples. There
is an alternative approach, however: to use only one of the two relations,
and derive an explicit expression for the corresponding function from
what we know about the procedure used to construct the Koch star.
It is intuitively clear that the length of the curve for fixed $\eta/d$
is proportional to $d$, so it is natural to try to determine the
function $\Phi_{2}$ in the relation $\ell=d\,\Phi_{2}\!\left(\eta/d\right)$.

To simplify, let us temporarily interpret $\ell$, $d$ and $\eta$
as scalars. In each step of the construction of the Koch star, a line
segment of length $\delta$ is replaced by four line segments of length
$\delta/3$. This means that after $n$ steps we have $\eta=d/3^{n}$,
so that 
\[
n=\log\left(d/\eta\right)/\log3,
\]
and that we also have $\ell=3d\left(4/3\right)^{n}$, so that 
\[
\ell=3d\left(e^{n\left(\log4-\log3\right)}\right)=3d\left(e^{\alpha\log\left(d/\eta\right)}\right)=3d\left(d/\eta\right)^{\alpha},
\]
where $\alpha=\frac{\log4-\log3}{\log3}\approx0.26$. Comparing this
expression with $\ell=d\,\Phi_{2}\!\left(\eta/d\right)$, where $\ell$,
$d$ and $\eta$ are quantities, we conclude that $\Phi_{2}\!\left(x\right)=3\left(\mu\!\left(x\right)\right)^{-\alpha}1_{Q}$,
giving the quantity relation 
\begin{equation}
\ell=d\left(3\left(\mu\!\left(\eta/d\right)\right)^{-\alpha}1_{Q}\right)=3\left(\mu\!\left(\eta/d\right)\right)^{-\alpha}d.\label{eq:ex7q}
\end{equation}
Incidentally, from (\ref{eq:ex7q}) and the relation $\ell=\eta\,\Phi_{1}\!\left(d/\eta\right)$
we can calculate $\Phi_{1}$, and we obtain $\Phi_{1}\!\left(x\right)=3\left(\mu\!\left(x\right)\right)^{1+\alpha}1_{Q}$,
so $\Phi_{1}\neq\Phi_{2}$ as anticipated.

Since $\mu\!\left(\eta/d\right)=\mu_{B}\!\left(\eta\right)/\mu_{B}\!\left(d\right)$
for any basis $B$ and $\mu_{B}\!\left(\ell\right)=\mu_{B}\!\left(d\right)\mu_{B}\!\left(\Phi_{2}\!\left(\eta/d\right)\right)$
for any $B$, the scalar relation corresponding to (\ref{eq:ex7q})
is 
\[
\mu_{B}\!\left(\ell\right)=\mu_{B}\!\left(d\right)\left(3\left(\mu_{B}\!\left(\eta\right)/\mu_{B}\!\left(d\right)\right)^{-\alpha}\right)=3\,\mu_{B}\!\left(d\right)^{1+\alpha}\mu_{B}\!\left(\eta\right)^{-\alpha}
\]
for any $B$, which can be written as 
\begin{equation}
\ell=3d^{1+\alpha}\eta^{-\alpha}.\label{eq:ex7s}
\end{equation}

The right-hand side of (\ref{eq:ex7s}) is a monomial with \emph{irrational
exponents}, multiplied by a constant. Note that such a monomial cannot
be obtained by dimensional analysis alone, since dimensional analysis
as understood here produces exponents which are integers or, equivalently,
rational numbers. As dimensional analysis ideally produces a relation
of the form 
\[
q^{k}=Kq_{1}^{k_{1}}\ldots q_{n}^{k_{n}}\quad\mathrm{or}\quad q=Cq_{1}^{k_{1}/k}\ldots q_{n}^{k_{n}/k},
\]
the scalar relation (\ref{eq:ex7s}) might give the impression that
only dimensional analysis is needed to derive it, but the quantity
relation (\ref{eq:ex7q}) reveals that this is not the case, although
(\ref{eq:ex7q}) also shows that the conclusion that $\ell=d\,\Phi_{2}\!\left(\eta/d\right)$,
based on dimensional analysis, can be separated from the conclusion
about $\Phi_{2}$, based on additional information specific to the
Koch star.\end{example}

\end{document}